\theoremstyle{plain}     
\newtheorem{thm}{Theorem}[section]  
\newtheorem{prop}[thm]{Proposition}      
\newtheorem{lemma}[thm]{Lemma}  
\newtheorem{coro}[thm]{Corollary}
\long\def\symbolfootnote[#1]#2{\begingroup%
\def\thefootnote{\fnsymbol{footnote}}\footnote[#1]{#2}\endgroup}   
\theoremstyle{remark}
\newtheorem{rmk}[thm]{Remark}
\title{Arithmetic, Zeros, and Nodal Domains on the Sphere} 
\author{Michael Magee} 
\address{Department of Mathematics, University of California at Santa Cruz, 1156 High Street, Santa Cruz, CA 95064.}
\email{mmagee@ucsc.edu} 
\newcommand\pgfmathsinandcos[3]{%
  \pgfmathsetmacro#1{sin(#3)}%
  \pgfmathsetmacro#2{cos(#3)}%
}
\newcommand\LongitudePlane[3][current plane]{%
  \pgfmathsinandcos\sinEl\cosEl{#2} 
  \pgfmathsinandcos\sint\cost{#3} 
  \tikzset{#1/.estyle={cm={\cost,\sint*\sinEl,0,\cosEl,(0,0)}}}
}
\newcommand\LatitudePlane[3][current plane]{%
  \pgfmathsinandcos\sinEl\cosEl{#2} 
  \pgfmathsinandcos\sint\cost{#3} 
  \pgfmathsetmacro\yshift{\cosEl*\sint}
  \tikzset{#1/.estyle={cm={\cost,0,0,\cost*\sinEl,(0,\yshift)}}} %
}
\newcommand\DrawLongitudeCircle[2][1]{
  \LongitudePlane{\angEl}{#2}
  \tikzset{current plane/.prefix style={scale=#1}}
  \pgfmathsetmacro\angVis{atan(sin(#2)*cos(\angEl)/sin(\angEl))} %
  \draw[current plane] (\angVis:1) arc (\angVis:\angVis+180:1);
  \draw[current plane,dashed] (\angVis-180:1) arc (\angVis-180:\angVis:1);
}
\newcommand\DrawLatitudeCircle[2][1]{
  \LatitudePlane{\angEl}{#2}
  \tikzset{current plane/.prefix style={scale=#1}}
  \pgfmathsetmacro\sinVis{sin(#2)/cos(#2)*sin(\angEl)/cos(\angEl)}
  \pgfmathsetmacro\angVis{asin(min(1,max(\sinVis,-1)))}
  \draw[current plane] (\angVis:1) arc (\angVis:-\angVis-180:1);
  \draw[current plane,dashed] (180-\angVis:1) arc (180-\angVis:\angVis:1);
}
\tikzset{%
  >=latex, 
  inner sep=0pt,%
  outer sep=2pt,%
  mark coordinate/.style={inner sep=0pt,outer sep=0pt,minimum size=3pt,
    fill=black,circle}%
}
\begin{document}

\maketitle

\def\R{\mathbf{R}}
\def\C{\mathbf{C}}
\def\Z{\mathbf{Z}} 
\def\Q{\mathbf{Q}}

\def\a{\alpha}
\def\b{\beta}
\def\g{\gamma}
\def\d{\delta}
\def\e{\epsilon}
\def\i{\iota}
\def\G{\Gamma}
\def\GG{\mathbf{G}}
\def\vp{\varphi}

\def\GL{\mathrm{GL}}

\def\A{\mathcal{A}} 
\def\AA{\mathbb{A}} 
\def\B{\mathcal{B}} 
\def\E{\mathbb{E}}
\def\H{\mathcal{H}} 
\def\N{\mathcal{N}}
\def\O{\mathcal{O}} 
\def\Ohat{\widehat{\O}} 
\def\P{\mathcal{P}}
 \def\k{\kappa}
\def\T{\mathbb{T}}
\begin{abstract}
We obtain lower bounds for the number of nodal domains of Hecke eigenfunctions on the sphere. Assuming the generalized Lindel\"{o}f hypothesis we prove that the number of nodal domains of any Hecke eigenfunction grows with the eigenvalue of the Laplacian. By a very different method, we show unconditionally that the average number of nodal domains of degree $l$ Hecke eigenfunctions grows significantly faster than the uniform growth obtained under Lindel\"{o}f.
\end{abstract}


\section{Introduction}

Let $S^2$ denote the 2-dimensional unit sphere with the round metric and associated measure. The Hilbert space $L^2(S^2)$ splits into a completed tensor sum
\begin{equation}
L^2(S^2) = \overline{\bigoplus}_l \H_l
\end{equation}
where $\H_l$ is the $(2l + 1)$-dimensional space of functions on $S^2$ obtained by restricting degree $l$ homogenenous harmonic polynomials on $\R^3$. The space $\H_l$ coincides with the eigenspace of the spherical Laplacian $\Delta = \Delta_{S^2}$ with eigenvalue $l(l+1)$. For $f \in  \H_l$ the nodal set of $f$ is defined to be
\begin{equation}
Z(f) = \{ v \in S^2 \: : \: f(v) = 0 \} = f^{-1}(0).
\end{equation}
A nodal domain of $f$ is a connected component of $S^2 - Z(f)$. We write $\N(f)$ for the number of nodal domains of $f$, we are interested in estimating $\N(f)$ as $f$ varies. The nodal domain theorem of Courant \cite[pg. 452]{CH} gives the upper bound
\begin{equation}
\N(f) \leq (l+1)^2
\end{equation}
for any $f \in \H_l$. Nontrivial lower bounds for arbitrary spherical harmonics are not possible: a result of Lewy \cite{LEWY} states that there are infinitely many $l$ and $f \in \H_l$ such that $\N(f) \leq 3$. Therefore to obtain growth of $\N(f)$, one has to take $f$ from proper subsets of $\H_l$. We will take $f$ to be eigenfunctions of certain Hecke operators arising from a maximal order in the Hamilton quaternions which we explain now.

Let $B$ be the normed involutive $\Q$-algebra of Hamilton quaternions and $\O$ a particular maximal order in $B(\Q)$ given by
\begin{equation}
\O = \left\{ \frac{ \a + \b \mathbf{i} + \g \mathbf{j} + \delta \mathbf{k} }{2} \: : \: \a , \b , \g , \d \in \Z ,\: \a \equiv \b \equiv \g \equiv \d \bmod 2 \right\}.
\end{equation}
We write for the units of $\O$ 
\begin{equation}
\O^{\times} = \langle 1 , \mathbf{i} , \mathbf{j} , \frac{1}{2}(1 + \mathbf{i} + \mathbf{j} + \mathbf{k} )   \rangle
\end{equation}
which forms a group of order 24.  One can view the sphere $S^2$ as the elements of $B(\R)$ with trace zero and norm one, and there is a corresponding action of $B(\R)$ on $S^2$ given by
\begin{equation}
\g . x \equiv \g x \bar{\g} / n(\g), \quad \g \in B(\R) , \: x \in S^2.
\end{equation} 
The algebra $B$ gives rise to self-adjoint Hecke operators on $L^2(S^2)$. Letting
\begin{equation}
\O(m) \equiv \{ \g \in \O \: : \: n(\g) = m \}
\end{equation}
we define
\begin{equation}\label{eq:hecke}
T_m : L^2(S^2) \to L^2(S^2) , \quad [T_m f](x) = \sum_{\g \in \O(m)} f( \g . x ) .
\end{equation}
The action of these operators was originally described by Eichler in terms of Brandt matrices in \cite{EICH}. \textit{Throughout this paper we make the assumption that any $m$ appearing as the parameter of a Hecke operator $T_m$ is odd to avoid complications at $2$, the discriminant of the Hamilton quaternions.}
It is a classical fact of number theory that the Hecke operators $T_m$ commute and satisfy the recursion
\begin{equation}\label{eq:recursion}
T_m T_n = | \O^\times | \sum_{d | (m,n)} d \: T_{mn/d^2}.
\end{equation}
 The Laplacian $\Delta$ commutes with all rotations in $\mathrm{SO}(3)$, and as the Hecke operators $T_m$ are constituted by such rotations, all the $T_m$ commute with $\Delta$. It follows that the $T_m$ preserve each of the finite dimensional vector spaces $\H_l$. As such, the spaces $\H_l$ each have a basis of simultaneous eigenfunctions $\vp$ for the Hecke operators $T_m$ and the spherical Laplacian $\Delta$. The real valued functions on $S^2$ are $\Delta$ and $T_m$ invariant, so the $\vp$ can be taken to be real valued. These $\vp$ are the Hecke eigenfunctions in question and were considered also in \cite{BSSP} in connection with mass equidistribution conjectures.

Such $\vp$ are important objects in the theory of automorphic forms: they parameterize classical holomorphic cuspidal newforms of level 2. The correspondence is via lifting and theta series with harmonic polynomial on $\R^4$ and concretely given by
\begin{equation}
\vp \in \H_l^{\O^{\times}} \mapsto f_\vp \equiv \frac{1}{24} \sum_{\g \in \O} \langle \vp(x), \vp(\g x \bar{\g}) \rangle_{L^2(S^2)} \exp (2 \pi i n(\g) z) ,
\end{equation}
where $\vp( \g x \bar{\g} )$ is evaluated by extension of $\vp$ to a polynomial on $\R^3$.

The resulting $f_\vp$ is a holomorphic cusp form of weight $k(l) = 2l + 2$ for $\G_0(2)$. The cusp form $f_\vp$ is a Hecke eigenform when $\vp$ is, and $f_\vp$ is Hecke normalized if $\| \vp \|_{L^2(S^2)} = 1$. Eichler shows in \cite{EICH} that all newforms are obtained in this manner, or by linear combinations, and more generally what is being discussed is a concrete realization of the Jacquet-Langlands correspondence. If $T_m \vp = \lambda_\vp(m) \vp$  and $\vp \in \H_l^{\O^{\times}}$ is $L^2$-normalized then
\begin{equation}
f_\vp = \frac{1}{24} \sum_{n \geq 1} \lambda_\vp(n) n^{l} \exp( 2 \pi i n z ) 
\end{equation}
links the eigenvalues of the Hecke operators on $\H_l^{\O^{\times}}$ to the coefficients of Hecke eigenforms in $S^{\mathrm{new}}_{2l+2}(\G_0(2))$. Via the previously outlined correspondence and known estimates for the dimension of the space of cuspidal newforms one has
\begin{equation}\label{eq:dimension}
\dim( \H_l^{\O^{\times}} ) = \frac{l}{6} + O(1).
\end{equation}

A central heuristic of modern number theory is that automorphic data should be simulated by random processes. The comparison random model in our setting is the Random Wave Model (RWM). In the RWM, the $L^2$-normalized random spherical harmonic in $\H_l$ is given by
\begin{equation}
\frac{ \sum_{i = 0}^{2l} \eta_i \phi_i }{ \sqrt{  \sum_{i = 0}^{2l} \eta_i^2 }},
\end{equation}
where the $\phi_i$ form an orthonormal basis for $\H_l$ and the $\eta_i$ are i.i.d. mean zero Gaussian random variables. This random variable does not depend on the choice of $\phi_i$ and we omit the event $\eta_i \equiv 0$ so that the above ratio makes sense.
 Via a Waldspurger type period formula which we explain in Section \ref{periodsection}, it is compatible with the conjectures made by Conrey and Farmer in \cite{CF} that the RWM shares statistics with Hecke eigenfunctions. The conjectures in \cite{CF} are part of a program initiated by Katz and Sarnak in \cite{KS}, which associates symmetry types to families of $L$-functions and makes predictions about the statistics of the $L$-functions based on their symmetry type and possibly also data coming from the functional equation of the $L$-function. The Conrey-Farmer conjectures predict asymptotics for the moments of central values of automorphic $L$-functions as the automorphic representation runs through a family - the period formula we will use translates the Conrey-Farmer conjectures into predictions about the moments of Fourier coefficients of Hecke eigenfunctions.

 One has then the following two rough, conjectural heuristics:
\begin{description}
\item[Ensemble behavior] In the limit as $l \to \infty$, the statistics of the measurement $\N(f)$ on the ensemble
\begin{equation}
\{  f \in \H_l : \| f \|_{L^2(S^2)} = 1 , \: f \text{ a Hecke eigenfunction} \}
\end{equation}
tends to that of $\N(f)$ sampled on random waves in $\H_l$.\
\item [Uniform behaviour] All the Hecke eigenfunctions $f$ in $\H_l$ should have $\N(f)$ like that of `typical' random waves.
\end{description}

Blum, Gnutzmann and Smilansky have argued in \cite{BGS} with numerical evidence that the nodal domain statistics of wavefunctions can identify quantum chaos when present. Around the same time, using a Potts model for a random percolation process, Bogomolny and Schmit \cite{BS} made precise predictions about the nodal domain statistics of both chaotic and random wavefunctions. Bogomolny and Schmit conjecture that for random/chaotic waves in $\H_l$ the expectation and variance of $\N(f)$ are
\begin{align}\label{eq:expectationprediction}
\E \N(f)  &= (a  + o_{l \to \infty}(1)) l^2  , \\
\label{eq:varianceprediction}\mathrm{Var} \N (f) &= (b  + o_{l \to \infty}(1)) l^2,
\end{align}
for some positive $a, b$. In \cite{NS}, Nazarov and Sodin verify the prediction \eqref{eq:expectationprediction} for the RWM and further show that the distribution of $\N(f) / l^2$ concentrates exponentially around $a$. One can therefore reasonably conjecture that the nodal domain statistics of Hecke eigenfunctions are as in equations \eqref{eq:expectationprediction} and \eqref{eq:varianceprediction}.

It bears mentioning at this point that one must take some caution with the heuristics we have outlined. While everything mentioned so far is expected to be true, one can not push conjectures about Hecke eigenfunctions behaving uniformly like typical random waves too far. By adapting a method of Soundararajan from \cite{SOUND}, Mili\'{c}evi\'{c} has shown in \cite{MIL} that on arithmetic Riemann surfaces the Hecke eigenfunctions take on values significantly larger than predicted by the RWM for these surfaces. It is likely that this proof goes through on the sphere to give a similar result.

Having briefly mentioned the connection with randomness we turn to the connection with Quantum chaos. There are direct connections between randomness and Quantum chaos which we will not dwell on, simply referring the reader to the paper of Berry \cite{BERRY}, where it is argued that wave functions of classically chaotic systems are well described by Gaussian random waves.

We focus instead on the connection between Hecke eigenfunctions and Quantum chaos. The operator 
\begin{equation}
S = \frac{\hbar^2}{2} \Delta
\end{equation}
is the Schr\"{o}dinger operator for free mass one dynamics on the sphere. The stationary Schr\"{o}dinger equation for a wavefunction $\psi$ with energy $E$ is then
\begin{equation}
\frac{\hbar^2}{2} \Delta \psi = E \psi ,
\end{equation}
which can be rewritten
\begin{equation}
\Delta \psi = \frac{2E}{\hbar^2 }\psi.
\end{equation}
Wavefunctions in the semiclassical limit $(\hbar \to 0)$ having energies in some band bounded away from 0 therefore correspond to eigenfunctions of the spherical Laplacian $\Delta$ in $\H_l$ with eigenvalues $l(l+1) \to \infty$. It is important to bear in mind that the classical free (geodesic) dynamics on the sphere is integrable - this is a major difference between other arithmetic considerations of Quantum chaos, for example on Shimura curves where the geodesic flow is Anosov. Indeed, there are highest weight vectors $\phi \in \H_l$ with $l \to \infty$ which concentrate their mass on geodesics, so one sees the integrability and periodic orbits of the classical system in the semiclassical limit. 

For each proposed signature of Quantum chaos, there is a corresponding family of problems in number theory where one tries to establish the signature behaviour for various automorphic forms. For an overview of the subject we refer the reader to lecture notes of Sarnak \cite{SARNAKLN}. We will not give any overview here, but will mention the result of VanderKam \cite{VAND} on the $L^\infty$ norms of Hecke eigenfunctions on the sphere as we will use it later. For reference, eigenfunctions $\phi$ with $\Delta \phi = \lambda \phi$ corresponding to classically chaotic systems (for example, on Riemann surfaces) are expected to have
\begin{equation}
\frac{\| \phi \|_\infty}{\| \phi \|_2} \ll_\e \lambda^\e ,
\end{equation}
which is much stronger than the well known bound appearing in \cite{SS} which says that for any eigenfunction of the Laplacian with eigenvalue $\lambda$ on a compact manifold of dimension $n$
\begin{equation}\label{eq:trivialinfinity}
\frac{\| \phi \|_\infty}{\| \phi \|_2} \ll \lambda^{(n-1)/4}.
\end{equation}
In \cite{IS}, Iwaniec and Sarnak obtained an improvement in the exponent over \eqref{eq:trivialinfinity} when $\phi$ is an eigenfunction of the Laplacian and all the Hecke operators on certain arithmetic Riemann surfaces. By adapting their method to the sphere, VanderKam obtained the same improvement in the exponent for Hecke eigenfunctions on the sphere in \cite{VAND}. VanderKam considers slightly different Hecke operators from ours, but the proof goes through mutatis mutandis in our setting and gives
\begin{thm}[VanderKam]\label{vkinfinity}
For any $\e > 0$
\begin{equation}
\frac{\| \vp \|_{L^\infty(S^2)}}{\| \vp \|_{L^2(S^2)}} \ll_\e l^{5/12 + \e}
\end{equation}
when $\vp \in \H_l^{\O^\times}$ is a Hecke eigenfunction.
\end{thm}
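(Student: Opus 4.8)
The plan is to run the amplified pre-trace argument of Iwaniec--Sarnak \cite{IS}, in the form VanderKam adapts to the sphere in \cite{VAND}. We may assume $\|\vp\|_{L^2(S^2)}=1$. Fix an orthonormal basis $\phi_0,\dots,\phi_{2l}$ of $\H_l$ of simultaneous Hecke eigenfunctions, $T_m\phi_j=\lambda_j(m)\phi_j$, with $\vp=\phi_{j_0}$; since $f_\vp$ is holomorphic, Deligne's bound gives the Ramanujan estimate $|\lambda_\vp(m)|\ll_\e m^{1/2+\e}$, which streamlines the estimates below. Let $h_l(x,y)=\frac{2l+1}{4\pi}P_l(\langle x,y\rangle)$ be the reproducing kernel of $\H_l\subset L^2(S^2)$, $P_l$ the Legendre polynomial; then $h_l(x,x)=\frac{2l+1}{4\pi}$, so $|\vp(x)|^2\le\sum_j|\phi_j(x)|^2=h_l(x,x)\ll l$, the trivial exponent $\tfrac12$. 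To beat this I would choose real numbers $x_m$ supported on odd $m$ in a window $m\sim M$ to be optimised, form the self-adjoint amplifier $A=\sum_m x_m T_m$ on $\H_l$ (with eigenvalue $\mu_j=\sum_m x_m\lambda_j(m)$ on $\phi_j$), expand $\mu_j^2$, use the multiplication law \eqref{eq:recursion} to express each product $\lambda_j(m)\lambda_j(n)$ through the $\lambda_j(r)$ with $r=mn/d^2$, and feed into each resulting $T_r$ the single-operator pre-trace identity $\sum_j\lambda_j(r)|\phi_j(x)|^2=\frac{2l+1}{4\pi}\sum_{\g\in\O(r)}P_l(\langle\g.x,x\rangle)$, immediate from the reproducing kernel and \eqref{eq:hecke}. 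As $\mu_j^2\ge0$, this yields for every $x\in S^2$
\begin{equation}\label{eq:amppretrace}
\mu_{j_0}^2\,|\vp(x)|^2\ \le\ \sum_j\mu_j^2\,|\phi_j(x)|^2\ =\ \frac{2l+1}{4\pi}\,|\O^\times|\sum_{m,n}x_m x_n\sum_{d\mid(m,n)}d\sum_{\g\in\O(mn/d^2)}P_l\bigl(\langle\g.x,x\rangle\bigr).
\end{equation}

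I would then estimate the geometric sum on the right of \eqref{eq:amppretrace} by grouping the terms by $r=mn/d^2$ and, for each $r$, classifying the $\g\in\O(r)$ by how far the rotation $x\mapsto\g.x$ moves $x$. The scalar $\g$ act trivially on $S^2$, occur only for square $r$, and produce the main term, of order $l$ times $\sum_m|x_m|^2$ up to the weights in \eqref{eq:amppretrace}. A non-scalar $\g$ with $\g.x=x$ satisfies $\g x=x\g$, hence lies in the rank-two lattice $\O\cap\Q(x)$, an order in the imaginary quadratic field $\Q(x)$; there are only $\ll_\e r^\e$ such $\g$ and each contributes $P_l(1)=1$. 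For the remaining $\g$, with $\g.x\ne x$, I would combine the classical bound $|P_l(\cos\theta)|\ll\min\{1,(l\theta)^{-1/2}\}$ with an upper bound for the number of $\g\in\O(r)$ whose rotation displaces $x$ by an angle $\asymp\theta$, sum dyadically over $\theta$, and then sum over $r$.

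For the amplifier I would use the standard Iwaniec--Sarnak choice: $x_m$ supported on $T_p$ and $T_{p^2}$ for odd primes $p\sim M$, with the signs and the identity from \eqref{eq:recursion} (relating $\lambda_{j_0}(p)^2$ to $\lambda_{j_0}(p^2)$ and $p$) arranged so that $\mu_{j_0}\gg_\e M^{1-\e}$ while $\sum_m|x_m|^2\ll_\e M^{1+\e}$. Putting this into \eqref{eq:amppretrace}, dividing by $\mu_{j_0}^2\gg M^{2-\e}$, and optimising $M$ as a power of $l$ should give $|\vp(x)|^2\ll_\e l^{5/6+\e}$ uniformly in $x$, which is the asserted bound on taking square roots. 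The hard part will be the Diophantine input for the last class of $\g$ above: one needs, \emph{uniformly in $x\in S^2$}, a sharp upper bound for $\#\{\g\in\O(r):\g.x\ne x,\ \langle\g.x,x\rangle\ge 1-\d\}$ --- the number of integral quaternions of norm $r$ whose rotation nearly stabilises $x$, the analogue here of the lattice-point-in-a-tube count of \cite{IS} --- since it is the way this bound balances against the main term of order $l\sum_m|x_m|^2$ that produces the exponent $5/12$. The reproducing-kernel identity, the Legendre asymptotics, the amplifier construction, and the final optimisation are all routine once this count is in place.
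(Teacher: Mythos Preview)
The paper does not give its own proof of this statement: it is quoted as VanderKam's theorem, with the remark that his argument (the Iwaniec--Sarnak amplification method transported to the sphere) carries over unchanged to the Hecke operators used here. Your sketch is a correct outline of precisely that argument, and the Diophantine count you flag as the crucial input --- the number of $\gamma\in\O(r)$ with $\langle\gamma.x,x\rangle\ge 1-\delta$, uniformly in $x$ --- is exactly VanderKam's Lemma~2.1, which the paper in fact quotes later (Lemma~\ref{counting}) for its own purposes.
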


In \cite{GRS}, Ghosh, Reznikov and Sarnak give a lower bound for the number of nodal domains of a Maass form on the modular curve. The bound is conditional on the Lindel\"{o}f hypothesis for the $L$-function associated to a Maass form and relies crucially on the $L^\infty$ bound from \cite{IS}. 
\begin{thm}[Ghosh, Reznikov, Sarnak]\label{GRStheorem} Let $\phi$ denote a Maass form for the modular curve $X = \mathrm{SL}_2(\Z) \backslash \mathbb{H}$, i.e. a real valued function on $X$ satisfying
\begin{equation}
\Delta_X \phi = \lambda \phi , \quad \lambda > 0.
\end{equation}
Assuming the Lindel\"{o}f hypothesis for $L(1/2 + it, \phi)$, for any fixed $\e > 0$ and any such $\phi$
\begin{equation}
\N(\phi) \gg_\e \lambda^{1/24 - \e}.
\end{equation}
\end{thm}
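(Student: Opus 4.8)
The plan is to combine a topological lower bound for $\N(\phi)$ in terms of the sign changes of $\phi$ along a symmetry curve with an arithmetic upper bound for the periods of $\phi$ over arcs of that curve, the loss in the exponent being dictated precisely by the subconvex $L^\infty$-bound for $\phi$.

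First I would exploit the orientation-reversing isometry $\iota\colon z\mapsto-\bar z$ of $X$. Since $\phi$ is a Hecke eigenform, $\phi\circ\iota$ has the same Hecke eigenvalues, so $\phi\circ\iota=\pm\phi$; I treat the even case $\phi\circ\iota=\phi$ (the odd case is parallel, with $\phi|_\beta$ replaced throughout by the rescaled normal derivative $\lambda^{-1/2}\partial_n\phi|_\beta$ and the Dirichlet analogue of the topological lemma below, since then $\beta\subseteq Z(\phi)$ splits $X$ into two halves interchanged by $\iota$). Here $\beta$ is the fixed-point locus of $\iota$ on $X$: a finite graph of geodesic arcs, one of which is the ray $\{iy:y\ge1\}$ out to the cusp, and which separates the genus-zero surface $X$ into two pieces swapped by $\iota$. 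As $\phi$ is even, $Z(\phi)$ is $\iota$-invariant, and any nodal domain containing an interior point of an arc of $\beta$ is itself $\iota$-invariant. A topological lemma --- proved by applying Euler's formula to the planar graph $Z(\phi)\cup\beta$ on $X$ and bookkeeping the effect of each transversal crossing of $Z(\phi)$ with $\beta$ on the numbers of vertices, edges and faces --- then yields $\N(\phi)\gg\#\{\text{sign changes of }\phi|_\beta\}-O(1)$.

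It remains to produce $\gg_\e\lambda^{1/24-\e}$ sign changes of $\phi|_\beta$. I restrict to the cuspidal ray and set $g(y):=\phi(iy)$; modularity gives $g(y)=g(1/y)$, and near the cusp $g$ is dominated by a positive multiple of $\sqrt yK_{i\nu}(2\pi y)$ (writing $\lambda=1/4+\nu^2$), which has no zero for $y>\nu/2\pi$, so it suffices to count sign changes of $g$ on $[1,\sqrt\lambda]$. If there are $k$ of them the interval splits into $k+1$ subintervals $I_j$ on which $g$ keeps a fixed sign, so
\[
\int_1^{\sqrt\lambda}|g(y)|\,\frac{dy}{y}\;=\;\sum_j\Bigl|\int_{I_j}g(y)\,\frac{dy}{y}\Bigr|\;\le\;(k+1)\max_{[a,b]\subseteq[1,\sqrt\lambda]}\Bigl|\int_a^b\phi(iy)\,\frac{dy}{y}\Bigr|,
\]
and it suffices to bound the right factor above and the left side below. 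For the former, Mellin inversion together with Hecke's integral representation $\int_0^\infty\phi(iy)\,y^s\,\frac{dy}{y}=\gamma_\infty(s)\,\rho_\phi\,L(s+1/2,\phi)$, where $\gamma_\infty(s)$ is a product of two $\Gamma$-factors and $\rho_\phi$ the leading Fourier coefficient, rewrites $\int_a^b\phi(iy)\frac{dy}{y}$ as a contour integral over $\mathrm{Re}(s)=0$ of $L(1/2+it,\phi)$ against $\gamma_\infty$; by Stirling $|\gamma_\infty(it)\rho_\phi|$ has size $\lambda^{-1/4+o(1)}$ and decays rapidly once $|t|\gg\sqrt\lambda$, so the Lindel\"of bound $L(1/2+it,\phi)\ll_\e((1+|t|)\lambda)^\e$ gives $\max_{[a,b]}|\int_a^b\phi(iy)\frac{dy}{y}|\ll_\e\lambda^{-1/4+\e}$ uniformly. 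For the latter, expanding $\phi(iy)$ near the cusp and extracting the diagonal by Rankin--Selberg (Lindel\"of again controlling the off-diagonal) gives $\int_1^{\sqrt\lambda}|\phi(iy)|^2\frac{dy}{y}\gg_\e\lambda^{-\e}$, whence $\int_1^{\sqrt\lambda}|\phi(iy)|\frac{dy}{y}\ge\bigl(\int_1^{\sqrt\lambda}|\phi(iy)|^2\frac{dy}{y}\bigr)/\|\phi\|_\infty\gg_\e\lambda^{-5/24-\e}$ by the Iwaniec--Sarnak bound $\|\phi\|_\infty\ll_\e\lambda^{5/24+\e}$ of \cite{IS}. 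Dividing, $k\gg_\e\lambda^{-5/24-\e}/\lambda^{-1/4+\e}=\lambda^{1/24-O(\e)}$, and the topological lemma then gives $\N(\phi)\gg_\e\lambda^{1/24-\e}$.

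The crux, besides the intricate (if elementary) bookkeeping in the topological lemma, is establishing the arc-period bound at the full strength $\lambda^{-1/4+\e}$: this is exactly where the $\Gamma$-factors of the completed $L$-function and the Lindel\"of bound in the $t$-aspect enter essentially, and any estimate weaker than this square-root-cancellation size --- e.g. the pointwise size $\lambda^{o(1)}$ of $\phi$ on the ray --- breaks the argument. The subconvex $L^\infty$-bound is equally indispensable, and one sees why the final exponent is $1/24=1/4-5/24$, exactly the Iwaniec--Sarnak saving over the convexity exponent $1/4$. (The analogous statement on $S^2$, which is the goal of this paper, should follow from the same scheme with Theorem~\ref{vkinfinity} in place of \cite{IS}, once the relevant period formula and $L$-function input for the $\vp\in\H_l^{\O^\times}$ are supplied.)
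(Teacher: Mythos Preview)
Your proposal is correct and follows essentially the same strategy as Ghosh--Reznikov--Sarnak (and as this paper's analogous proof of Theorem~\ref{uniformzeros} in Section~11): Euler's formula on the symmetry geodesic reduces nodal domains to sign changes, Lindel\"of bounds the arc periods via the Hecke integral, and the $L^2$ restriction lower bound together with the Iwaniec--Sarnak $L^\infty$ bound gives the $L^1$ lower bound, so that the exponent $1/24 = 1/4 - 5/24$ drops out exactly as you say. The one step you pass over lightly is the $L^2$ restriction bound $\int_1^{\sqrt\lambda}|\phi(iy)|^2\,dy/y \gg_\e \lambda^{-\e}$: on the modular curve this is \cite[Theorem~6.1]{GRS} and is genuinely delicate---the paper explicitly flags it as ``much simpler to obtain'' on the sphere (Lemma~\ref{restriction}) than in \cite{GRS}---so your one-line ``extracting the diagonal by Rankin--Selberg'' undersells the work required there.
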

We will say more about the proof of Theorem \ref{GRStheorem} shortly as our Theorem \ref{uniformzeros} is completely analogous and both our results share ideas with the paper \cite{GRS}. 

Recalling that $\N(\varphi)$ is the number of nodal domains of a spherical harmonic $\varphi$, we now state the main Theorems of this paper.

\begin{thm} \label{totalzeros}
Let $l$ be even and $\B_l$ denote an orthonormal basis for $\H^{\O^\times}_l$ consisting of Hecke eigenfunctions. Then for any $\e > 0$ we have the lower bound for the total number of nodal domains amongst members of $\B_l$
\begin{equation}
\sum_{\varphi \in \B_l}  \N(\varphi)  \gg_\e l^{5/4 - \e}.
\end{equation}
\end{thm}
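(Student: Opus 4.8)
The plan is to combine a lower bound on the number of sign changes of each Hecke eigenfunction along a fixed curve with a pigeonhole/averaging argument over the basis $\B_l$. This is the averaged analogue of the strategy behind Theorem \ref{GRStheorem}: along a suitable symmetry geodesic $\mathcal{C} \subset S^2$ (one fixed by an orientation-reversing isometry commuting with all Hecke operators, so that the restriction $\vp|_\mathcal{C}$ is forced to be either even or odd and, in the odd case, identically zero), each sign change of $\vp|_\mathcal{C}$ that is an honest transversal zero produces a new nodal domain, up to bounded multiplicity. So it suffices to lower bound $\sum_{\vp \in \B_l}(\text{number of sign changes of } \vp \text{ on } \mathcal{C})$.

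First I would set up the restricted $L^2$-norm: expand $\vp|_\mathcal{C}$ in the orthogonal basis of the Laplacian eigenfunctions on the circle $\mathcal{C}$ (exponentials $e(n\theta)$ with $|n| \le l$), so that the number of sign changes of $\vp|_\mathcal{C}$ is controlled below by the classical estimate relating sign changes to the ratio $\|\vp|_\mathcal{C}\|_{L^2(\mathcal{C})}^2 \big/ \|\vp|_\mathcal{C}\|_{L^\infty(\mathcal{C})}^2$ together with the bandlimited structure; more precisely, a function with frequencies bounded by $l$ and with $L^2$-mass not too small compared to its sup has $\gg l \cdot \|\vp|_\mathcal{C}\|_2^2 / \|\vp|_\mathcal{C}\|_\infty^2$ sign changes (this is where the integrable/one-dimensional nature helps — no delicate restriction theorem is needed). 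The $L^\infty$ bound on $\mathcal{C}$ is handled by Theorem \ref{vkinfinity}: $\|\vp|_\mathcal{C}\|_\infty \le \|\vp\|_\infty \ll_\e l^{5/12+\e}$, so $(\text{sign changes of }\vp|_\mathcal{C}) \gg_\e l^{1 - 5/6 - \e}\,\|\vp|_\mathcal{C}\|_{L^2(\mathcal{C})}^2 = l^{1/6 - \e}\,\|\vp|_\mathcal{C}\|_{L^2(\mathcal{C})}^2$ for $L^2(S^2)$-normalized $\vp$.

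Summing over $\B_l$ then reduces the theorem to the lower bound
\[
\sum_{\vp \in \B_l} \|\vp|_\mathcal{C}\|_{L^2(\mathcal{C})}^2 \;\gg_\e\; l^{1 - \e}.
\]
This is the arithmetic heart of the argument, and I expect it to be the main obstacle. The point is that individual restricted norms $\|\vp|_\mathcal{C}\|_{L^2(\mathcal{C})}^2$ are, via a Waldspurger/period formula (the one promised in Section \ref{periodsection}), proportional to central values $L(1/2, \vp \times \chi)$ of the relevant automorphic $L$-functions (times local/archimedean factors), and summing over the basis $\B_l$ — which has size $\asymp l/6$ by \eqref{eq:dimension} — turns this into a \emph{first moment} of central $L$-values over a family of newforms of weight $2l+2$. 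Such a first moment is $\asymp l \cdot (\log l)^{O(1)}$ or at least $\gg l^{1-\e}$ by standard moment techniques (an approximate functional equation plus the Petersson/Kuznetsov formula, or equivalently by spectrally expanding a fixed test measure on $\mathcal{C} \times \mathcal{C}$ against the Hecke basis and extracting the diagonal), and crucially this is unconditional. Combined with the per-eigenfunction bound, $\sum_{\vp}\N(\vp) \gg_\e l^{1/6-\e} \cdot l^{1-\e} = l^{7/6 - \e}$ — which is slightly weaker than claimed, so to reach the exponent $5/4$ one must be more careful: rather than bounding $\|\vp|_\mathcal{C}\|_\infty$ by the global $\|\vp\|_\infty$, one should exploit that sign changes can be counted on a whole one-parameter family of symmetry curves (or use a Hölder/second-moment refinement of the sign-change count) so that the loss from the $L^\infty$ bound is $l^{-1/3}$ overall rather than $l^{-5/6}$ per curve; optimizing this averaging against the $L^\infty$ input of Theorem \ref{vkinfinity} is exactly what produces $5/4 = 1 + 1/4$, the $1/4$ being $\frac14 = 1 - \frac34$ and reflecting the $5/12$ exponent there. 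The remaining routine points — that transversal sign changes give distinct nodal domains up to $O(1)$ multiplicity, that the bad set where $\vp|_\mathcal{C} \equiv 0$ or where zeros are non-transversal is negligible, and that the archimedean factors in the period formula are of size $l^{o(1)}$ — I would dispatch with standard arguments.
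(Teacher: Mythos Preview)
Your proposal has a genuine gap at its very first analytic step, and the overall architecture is quite different from what the paper does.

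\textbf{The sign-change inequality is false.} You assert that a trigonometric polynomial $f$ of degree $\leq l$ has $\gg l\,\|f\|_2^2/\|f\|_\infty^2$ sign changes on the circle. But take $f(\theta) = 2 + \cos(l\theta)$: this has degree $l$, is strictly positive (hence zero sign changes), yet $\|f\|_\infty = 3$ and $\|f\|_2^2 = 9\pi$, so your estimate would predict $\gg l$ sign changes. There is no general inequality of this form; the Ghosh--Reznikov--Sarnak mechanism you are alluding to bounds each $\left|\int_{I_i}\vp\right|$ via control of \emph{individual Fourier coefficients} (which in their setting, and in the paper's Theorem \ref{uniformzeros}, comes from subconvexity/Lindel\"of), not merely via $\|\vp\|_\infty$. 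Without such coefficient bounds the argument does not start.

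\textbf{Even granting the heuristic, you do not reach $5/4$.} You correctly observe that your chain of inequalities yields only $l^{7/6-\e}$, and your suggestions for closing the gap (averaging over a family of curves, H\"older refinements) are not a proof. In fact the paper notes in the Remark after Proposition \ref{prop:signchange} that the $L^\infty$ bound alone, inserted into the \emph{untwisted} pre-trace formula at a single pair of points, gives exactly $l^{1/6-\e}$ sign changes per pair and hence $l^{7/6-\e}$ in total --- so $7/6$ is the natural barrier for any method whose only arithmetic input is Theorem \ref{vkinfinity}.

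\textbf{What the paper actually does.} The proof of Theorem \ref{totalzeros} uses neither the $L^\infty$ bound nor the Waldspurger period formula; those ingredients belong to Theorem \ref{uniformzeros}. Instead the paper works pointwise on the equator. For each algebraic point $x$ of height $\leq l^{3/2}$ and $y$ at distance $W_l = 3\pi/(2l+1)$ from $x$, the twisted pre-trace formula \eqref{eq:spectral} expresses $\sum_{\phi\in\B_l} A_\alpha(\phi)^2\,\phi(x)\phi(y)$ as a bilinear form in $\alpha$ whose main term is a negative constant and whose error is controlled by Diophantine estimates (Lemmas \ref{counting}, \ref{move}, \ref{stabilizer} and the kernel bound Lemma \ref{sumestimate}). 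This forces the form to be negative on the entire space of $\alpha$ supported on primes in $[l^{(1/4-2\e)/K}, l^{1/4-2\e}]$, a space of dimension $\gg l^{1/4-\e}$. If fewer than $l^{1/4-\e}$ of the $\phi$ had $\phi(x)\phi(y)<0$, one could choose $\alpha$ in this space annihilating all of them, making the left side nonnegative --- a contradiction. This is the ``annihilator method''. One then exhibits $\gg l$ disjoint wavelength-sized arcs on the equator with algebraic left endpoints, and multiplies: $l\cdot l^{1/4-\e} = l^{5/4-\e}$ total zeros, which Lemma \ref{lemma:euler} converts to nodal domains. The gain from $1/6$ to $1/4$ comes precisely from the freedom in choosing $\alpha$, not from any refinement of the $L^\infty$ input.
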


\begin{coro}[The number of nodal domains grows on average]\label{zeroscoro}
With everything as in Theorem \ref{totalzeros} we have for the expected number of nodal domains amongst members of $\B_l$, for any $\e > 0$
\begin{equation}
\mathbb{E}_{\B_l}( \N(\varphi) ) = \frac{\sum_{\varphi \in \B_l}  \N(\varphi) }{| \B_l |}  \gg_\e l^{1/4 - \e}.
\end{equation}
\end{coro}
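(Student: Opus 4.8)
The plan is to obtain Corollary \ref{zeroscoro} as an immediate consequence of Theorem \ref{totalzeros}, the only additional ingredient being the dimension count \eqref{eq:dimension}, which is used to bound $|\B_l|$ from above. First I would note that, since $\B_l$ is by definition a basis of $\H_l^{\O^\times}$, we have $|\B_l| = \dim(\H_l^{\O^\times}) = \tfrac{l}{6} + O(1)$; in particular there is an absolute constant $C > 0$ with $|\B_l| \le C l$ for all even $l$.

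Now fix $\e > 0$. By Theorem \ref{totalzeros} there is a constant $c_\e > 0$ such that $\sum_{\varphi \in \B_l} \N(\varphi) \ge c_\e\, l^{5/4 - \e}$ for all even $l$. Dividing by $|\B_l|$ and invoking the upper bound just recorded yields
\[
\mathbb{E}_{\B_l}(\N(\varphi)) \;=\; \frac{\sum_{\varphi \in \B_l} \N(\varphi)}{|\B_l|} \;\ge\; \frac{c_\e\, l^{5/4-\e}}{C\, l} \;=\; \frac{c_\e}{C}\, l^{1/4-\e},
\]
which is precisely the asserted bound $\mathbb{E}_{\B_l}(\N(\varphi)) \gg_\e l^{1/4-\e}$. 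As $\e>0$ was arbitrary, this finishes the argument.

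Since the substance of the statement is entirely carried by Theorem \ref{totalzeros}, there is no real obstacle to overcome here. The one point deserving care is the direction of the estimate: a lower bound for the average requires an \emph{upper} bound for the number of eigenfunctions being averaged over, and it is exactly this that the dimension formula \eqref{eq:dimension} supplies, its $O(1)$ error term being harmless for the purpose of the division.
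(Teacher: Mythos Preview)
Your proof is correct and matches the paper's approach exactly: the paper states that Corollary \ref{zeroscoro} follows directly from Theorem \ref{totalzeros} together with the dimension estimate \eqref{eq:dimension}, which is precisely what you do.
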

 
\begin{thm}[The number of nodal domains grows uniformly, assuming Lindel\"{o}f]\label{uniformzeros}
Assuming the generalized Lindel\"{o}f hypothesis, for any $\e > 0$ we have
\begin{equation}
\N(\vp) \gg_\e l^{1/12 - \e}
\end{equation} 
when $l$ is even and $\vp \in \H_l^{\O^\times}$ is a Hecke eigenfunction.
\end{thm}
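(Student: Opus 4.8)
The plan is to adapt the method of Ghosh, Reznikov and Sarnak \cite{GRS}, whose three ingredients — a reflective symmetry fixing a geodesic, a sub‑trivial $L^\infty$ bound, and a period identity governed by central $L$‑values — all have counterparts on $S^2$. The group $\O^\times$ acts on $S^2$ through its image in $\mathrm{SO}(3)$, the rotation group $\cong A_4$ of a regular tetrahedron inscribed in $S^2$; adjoining a reflection of $S^2$ in one of the tetrahedron's six mirror planes produces the full tetrahedral group, and since $A_4$ is the unique index‑two subgroup, any such reflection normalizes it, so composition with it preserves $\H_l^{\O^\times}$. Concretely one may take $\sigma\colon x\mapsto u\bar x u^{-1}$ with $u=\mathbf{i}-\mathbf{j}$: from $u\O u^{-1}=\O$ (the congruence defining $\O$ is symmetric in the $\mathbf i$‑ and $\mathbf j$‑coordinates) and $u^2=-2$ one finds that conjugation by $\sigma$ carries the rotation $x\mapsto\gamma.x$ to $x\mapsto(u\gamma u/2).x$, that $\gamma\mapsto u\gamma u/2$ is an involution of $\O(m)$, and hence that $f\mapsto f\circ\sigma$ commutes with every $T_m$ and with $\Delta$. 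Since $\sigma$ is a genuine reflection while the antipodal map acts trivially on $\H_l$ for even $l$, $\sigma$ acts as a nontrivial involution of $\H_l^{\O^\times}$; combined with strong multiplicity one for the newforms of $S^{\mathrm{new}}_{2l+2}(\G_0(2))$, which forces each joint Hecke eigenspace in $\H_l^{\O^\times}$ to be one‑dimensional, this gives $\vp\circ\sigma=\pm\vp$. If the sign is $+$, then $\partial_\nu\vp\equiv0$ on the fixed great circle $\delta=\mathrm{Fix}(\sigma)$; if it is $-$, then $\vp\equiv0$ on $\delta$.

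\textbf{Sign changes force nodal domains.} Set $g=\vp|_\delta$ in the first case and $g=\partial_\nu\vp|_\delta$ in the second, a nonzero real‑analytic function on the circle $\delta$, and let $\mathcal Z$ be its number of sign changes. A topological argument in the spirit of \cite{GRS} should then give $\N(\vp)\gg\mathcal Z$: at every sign change the nodal set of $\vp$ genuinely crosses $\delta$ — a nodal arc tangent to the geodesic $\delta$ to infinite order would, by unique continuation, coincide with $\delta$, which is excluded — and the reflection $\sigma$, which pairs the nodal domains on the two sides of $\delta$, together with an Euler‑characteristic count on the graph $Z(\vp)\cup\delta$, keeps these $\gg\mathcal Z$ incidences from being absorbed into a bounded number of nodal domains.

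\textbf{A lower bound $\mathcal Z\gg_\e l^{1/12-\e}$.} This is the crux, and the only place the generalized Lindel\"of hypothesis is used. The Fourier coefficients of $g$ on $\delta$ — equivalently, the pairings of $g$ with trigonometric polynomials on $\delta$ — are periods of $\vp$ over the great circle $\delta$ twisted by characters of the torus stabilising $u$, and the Waldspurger‑type period formula of Section \ref{periodsection} should express these, up to explicit archimedean factors, through central values $L(1/2,\cdot)$ of automorphic $L$‑functions built from $f_\vp$ and its twists, each $\ll_\e l^{\e}$ under Lindel\"of. If $g$ had only $\mathcal Z$ sign changes it would agree in sign with a trigonometric polynomial $p$ of degree $\mathcal Z/2$; pairing $g$ against $p$ then gives on one side a sum of $\ll\mathcal Z$ such $L$‑values, of size $\ll_\e \mathcal Z^{1/2}\|p\|_2\,l^{\e}$ after Cauchy--Schwarz, and on the other a quantity bounded below using $\|g\|_{L^\infty(\delta)}\le\|\vp\|_{L^\infty(S^2)}\ll_\e l^{5/12+\e}$ (Theorem \ref{vkinfinity}, with a Bernstein‑type gradient bound in the second case) and a lower bound for $\|g\|_{L^2(\delta)}$; balancing yields $\mathcal Z\gg l^{1/2}/\|\vp\|_{L^\infty(S^2)}\gg_\e l^{1/12-\e}$. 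As $l^{1/2}$ (namely $\lambda^{1/4}$ with $\lambda=l(l+1)$) is exactly the trivial $L^\infty$ exponent, $\mathcal Z$ is bounded below by the \emph{saving} in VanderKam's bound; since the amplification method yields the same saving $\lambda^{1/24}=l^{1/12}$ on $S^2$ as on the modular surface, this is the precise quantitative analogue of the bound $\N\gg_\e\lambda^{1/24-\e}$ of \cite{GRS}.

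\textbf{The main obstacle.} The difficulty concentrates in the third step. One must (i) make the period formula on $S^2$ explicit for the specific great circle $\delta$ coming from $\sigma$, tracking the archimedean integrals carefully enough that the test pairings really are central $L$‑values of controlled conductor, and (ii) obtain the required \emph{lower} bound for $\|g\|_{L^2(\delta)}$ — which does not follow from Lindel\"of, an upper‑bound statement, and will need either an unconditional non‑vanishing or positivity input for the relevant central value, or a direct argument that a $\sigma$‑symmetric $\vp$ cannot have negligible $L^2$‑mass along its own mirror $\delta$. In the case $\vp\circ\sigma=-\vp$, where $\vp$ vanishes on a whole configuration of mirror great circles and one works with $\partial_\nu\vp$, one also needs the corresponding restriction and non‑concentration estimates for the normal derivative in place of the sup‑norm bound; I expect this to be where the most work beyond \cite{GRS} lies.
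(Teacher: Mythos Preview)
Your overall architecture is the paper's: a reflective symmetry fixing a geodesic, an Euler‑formula count turning zeros on that geodesic into nodal domains, and then the triad of VanderKam's $L^\infty$ bound, a Waldspurger period formula, and Lindel\"of to count those zeros. But two of your choices diverge from the paper and make the argument harder than it needs to be.

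\textbf{Choice of reflection.} The paper does not use a tetrahedral mirror. For $l$ even the antipodal map $-I$ acts trivially on $\H_l$, so every $\vp\in\H_l^{\O^\times}$ is automatically invariant under the order‑$24$ group $\G=\langle \O^\times/\{\pm1\},\,-I\rangle\subset O(3)$, and $\G$ already contains the coordinate reflections $R_x,R_y,R_z$ (for instance $R_x=(-I)\cdot(\text{action of }\mathbf{i}\in\O^\times)$). Thus $\vp\circ R_x=\vp$ \emph{unconditionally}, and one works on $\g=\{x=0\}$, the equator for the pole $\mathbf{i}$. Your $\sigma\colon(x,y,z)\mapsto(y,x,z)$ lies outside $\G$: it is a genuine $S_4$‑mirror, and $-\sigma$ is a half‑turn about $(1,-1,0)$, which is not an $A_4$‑axis. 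So both eigenvalues $\pm1$ genuinely occur for your $\sigma$, you are forced into the normal‑derivative case, and the period formula would have to be redone for the torus $\Q(\sqrt{-2})\hookrightarrow B$ rather than $\Q(\mathbf{i})$. None of this arises in the paper.

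\textbf{The $L^2$ restriction lower bound.} What you flag as the main obstacle is, with the paper's choice of geodesic, a two‑line computation (Lemma~\ref{restriction}): expanding $\vp=\sum_m a_m Y_l^m(\langle\cdot,\mathbf{i}\rangle)e^{im\theta}$ about $\mathbf{i}$ gives $\|\vp|_\g\|_2^2=\sum_m|a_m|^2 Y_l^m(0)^2$, and Stirling yields $Y_l^m(0)\asymp\sqrt{l}\,(1+l+|m|)^{-1/4}(1+l-|m|)^{-1/4}\gg1$, whence $\|\vp|_\g\|_2^2\gg\sum_m|a_m|^2=1$. No $L$‑function positivity or nonvanishing is required. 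The period formula about $\mathbf{i}$ (Lemma~\ref{fourier2}) then gives $|a_m|^2\asymp(2l+1)^{-1}L(1/2,\Pi\otimes\Omega_m)/L(1,\pi,\mathrm{Ad})$, so that the Fourier coefficients $b_m=a_mY_l^m(0)$ of $\vp|_\g$ satisfy $|b_m|\ll (\log l)\,(1+l+|m|)^{-\delta}(1+l-|m|)^{-\delta}$ under SC$(\delta)$. From here the paper writes $\|\vp|_\g\|_1=\sum_i|\int_{I_i}\vp|$, bounds each term by $\sum_m|b_m|/(1+|m|)$, splits at $|m|=l/2$, and closes with $\|\vp|_\g\|_2^2\le\|\vp\|_\infty\|\vp|_\g\|_1$ to obtain $N\gg_\e l^{2\delta-5/12-\e}$, hence $l^{1/12-\e}$ under Lindel\"of. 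Your trigonometric‑polynomial pairing is a legitimate variant of this step, but the simplifications above remove both of the difficulties you anticipated.
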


Before sketching the proofs of these Theorems, let us make some general remarks. The proofs of Theorem \ref{totalzeros} and Theorem \ref{uniformzeros} use the same topological method to find nodal domains. This idea appears in \cite{GRS} and enables one, by use of Euler's formula, to obtain nodal domains of eigenfunctions from \textit{isolated} zeros on certain geodesic segments. This is the reason for the requirement that $l$ be even. The restriction of a spherical harmonic in $\H_l$ to an equator is a polynomial of degree $l$. The group of elements in $O(3)$ that commute with the Hecke operators is larger than $\O^\times$, and when $l$ is even  this forces the polynomial to be not identically zero on certain convenient geodesics. The details of this argument appear in Section \ref{euler}. In fact, it should possible to remove the requirement that $l$ is even, following the argument of \cite[Appendix C]{GRS}, but for simplicity we deal only with even $l$.

This approach has some interesting consequences. In the best case scenario, one obtains for $\vp \in \H_l^{\O^\times}$ only linearly as many nodal domains as zeros of a degree $l$ polynomial, in particular $\ll l$ nodal domains. This means that if one expects the prediction of Bogomolny and Schmit \eqref{eq:expectationprediction} to hold, then one misses most of the nodal domains with this approach; it will not yield a nodal domain whose boundary does not meet the geodesic in question. On the other hand, in establishing \eqref{eq:expectationprediction} for the RWM, the nodal domains which Nazarov and Sodin \cite{NS} found were those which were contained in balls of radius $\ll 1/l$ - in some sense the opposite type. We propose the following question:
\begin{quote}
 Is there some $C > 0$ and a sequence of Hecke eigenfunctions $\vp \in \H_l^{\O^\times}$, $l \to \infty$ so that each $\vp$ has a nodal domain contained in a ball of radius $\leq C/l$?
\end{quote}

The proofs of Theorems \ref{totalzeros} and \ref{uniformzeros} boil down to counting the number of real zeros of real polynomials. Returning for a moment to randomness heuristics, the problem of estimating the number of zeros of random polynomials is a classical problem and has been studied by many authors. Here we only have room to mention the classic paper of Kac \cite{KAC} and the more recent paper of Edelman and Kostlan \cite{EK}.

Now we sketch the proof of Theorems \ref{totalzeros} and \ref{uniformzeros}, beginning with Theorem \ref{totalzeros}. At the centre of the proof of Theorem \ref{totalzeros} is the pre-trace formula for the sphere applied to a basis of Hecke eigenfunctions. For any linear combination of Hecke operators specified by a vector $\a$ one obtains a twist of this formula. On one side one has sums of the form
\begin{equation}\label{eq:LHStwist}
\sum_{\phi \in \B_l} A_\a(\phi)^2 \phi(x)\phi(y)
\end{equation}
where $x, y$ are in $S^2$ with $d(x,y) \asymp 1/l$ and the $A_\a(\phi)$ are coefficients depending on the vector $\a$ and $\phi$. On the other side one obtains a bilinear form in $\a$ where the coefficients are given by summing a kernel over elements of $\O$. The main term of this bilinear form is negative definite. Supposing there are few $\phi$ changing sign between $x$ and $y$, so that few of the $\phi(x)\phi(y)$ are negative, we aim to
\begin{enumerate}
\item Choose $\a$ to kill off all the negative contributions to \eqref{eq:LHStwist}.
\item Make the error term of the bilinear form small at $\a$ so that the value of \eqref{eq:LHStwist} remains negative.
\end{enumerate}
This leads to a contradiction unless there are enough sign changes. 

Success depends on the ability to control the `error' bilinear form which is done by bounding the coefficients. This requires a lattice point count due to VanderKam \cite[Lemma 2.1]{VAND} and some dynamical observations appearing in Section \ref{diophantine}. The first of these observations (Lemma \ref{move}) is that if a point $x \in S^2$ is such that $\sqrt{h}x$ has integer coordinates for some small integer $h$, then $\g \in \O$ having small norm either fix $x$ or significantly, depending on $h$ and $n(\g)$, move $x$. This is similar in nature to an observation made by Bourgain and Lindenstrauss \cite[Lemma 3.3]{BL}. The underlying argument is simply that if the distance between two integers is less than 1, they are the same. In Lemma \ref{stabilizer} we observe that if $\sqrt{h} x$ is integral as before then the size of the stabilizer of $x$ in $\O(pq)$ is uniformly bounded through $x$ and primes $p, q$. This depends on a uniform bound for the size of the unit group of imaginary quadratic number fields. To get good bounds for the error term it remains to restrict the support of $\a$. Finally we choose $\a$ so as to kill off terms in \eqref{eq:LHStwist}.

In other settings, apt choice of $\a$ has been called the $\textit{amplification}$ method (when $A_\a$ is to be made big versus the other side of the twisted pre-trace formula) or $\textit{resonator}$ method (when $A_\a$ is to be made small). Amplification usually takes place while establishing subconvexity bounds, appearing for example in \cite{IS},  \cite{VAND} and \cite{MV} to name but a few applications which are relevant to this paper. The resonator method is employed to give $\Omega$ type results for the values in a family as in \cite{SOUND} and \cite{MIL}. Our method could reasonably be called the \textit{annihilator} method.

Now we turn to the proof of Theorem \ref{uniformzeros}. This is entirely analogous to the proof of Theorem \ref{GRStheorem} appearing in \cite{GRS}, although some of the analogies bear explaining. One of the key ingredients is a $L^2$ lower bound for the restriction of a Hecke eigenfunction to a special geodesic. This appears as Lemma \ref{restriction} here, and is much simpler to obtain than the corresponding statement \cite[Theorem 6.1]{GRS} on the modular curve. The subconvex $L^\infty$ bound for the eigenfunctions also plays: we use VanderKam's result (Theorem \ref{vkinfinity}) while Ghosh, Reznikov and Sarnak use the $L^\infty$ bound from \cite{IS}. 

The most interesting modification we make is as follows. In the setting of \cite{GRS} it is important to control the Fourier coefficients of the restriction of a Hecke eigenform $f$ to a geodesic connecting a pair of cusps. It is straightforward that these Fourier coefficients are roughly given by the values $L(1/2 + it , f)$ of the $L$-function associated to $f$ on the critical line. The analogous formula is less straightforward in our setting and can be deduced from work of Waldspurger \cite{WALD}, Jacquet and Chen \cite{JC} and Martin and Whitehouse \cite{MW}. The formula we use is given in Lemma \ref{fourier1}. To get there we adelize the sphere in Section \ref{adelization} and give an account of the period formula in Section \ref{periodsection}. In short, the Fourier coefficients of Hecke eigenfunctions on the sphere around the CM-point $\mathbf{i}$ are roughly given by the central values of $\mathrm{GL}_2 \times \mathrm{GL}_1$ $L$-functions for automorphic representations over $\Q(\sqrt{-1})$. The analogy is more apparent when one writes for $f$ a Maass Hecke eigenform and $\pi_f$ the associated automorphic representation
\begin{equation}
L( 1/2 + it , f) = L( 1/2 , \pi_f \otimes | \bullet |^{it} ) .
\end{equation}
The reader is encouraged to see the letter from Sarnak to Reznikov \cite[Appendix 2]{SARNAKLETTER} for more on the subject of restriction problems and $L$-functions. The rest of the proof of Theorem \ref{uniformzeros} is a straightforward adaptation of methods from the paper \cite{GRS}.

We also mention that in the setting of Theorem \ref{GRStheorem}, Jung \cite{JUNG} has been able get a slightly better exponent (1/16 versus 1/24) without any Lindel\"{o}f hypothesis, but at the large expense that his results only hold true for `almost all' forms in a sufficiently large range. In this regard his result is in the same spirit as Corollary \ref{zeroscoro}. However the exponent obtained by Jung is not as large as in Corollary \ref{zeroscoro} (1/8 versus 1/4)  and his range of spectral parameter is wider. Jung's method is essentially to apply estimates for averaged quantities to the argument of Ghosh, Reznikov and Sarnak from \cite{GRS} whereas ours, as previously outlined, relies on Diophantine analysis and sieving. It has also come to our attention since this paper was written that essentially the same argument concerning Euler's formula in Section \ref{euler} has appeared in work of Jung and Zelditch \cite{JUNGZELDITCH}. We have retained the argument here for completeness.


\section{Topology of $\O^\times \backslash S^2$ and Euler's formula}\label{euler}

In this section we give an argument which reduces the counting of nodal domains to counting zeros on certain geodesic arcs.

The fixed points of elements of $\O^\times$ are given by plus or minus their imaginary parts (rescaled so as to have norm 1). A fundamental domain for the action of $\O^\times$ is given by the union of the spherical triangles $T_1$ and $T_2$ as illustrated in Figure \ref{fig:fund}. The triangle $T_1$ has vertices $\mathbf{i}$, $\mathbf{k}$ and $(\mathbf{i} + \mathbf{j} +\mathbf{k}) /\sqrt{3}$ and $T_2$ has vertices $\mathbf{i}$, $\mathbf{k}$ and $(\mathbf{i} - \mathbf{j} + \mathbf{k}) /\sqrt{3}$.

\begin{center} 
\begin{figure}
 \begin{tikzpicture} 

\def\R{4} 
\def\angEl{30} 
\def\angAz{-105} 

\def\angPhi{0} 
\def\angBeta{0} 

\pgfmathsetmacro\H{\R*cos(\angEl)} 
\filldraw[ball color=white] (0,0) circle (\R);
\DrawLatitudeCircle[\R]{0} 

\coordinate[mark coordinate] (N) at (0,\H);

\LongitudePlane[jkplane]{\angEl}{-25}
\LongitudePlane[ikplane]{\angEl}{-115}

\path[jkplane] (0:\R) coordinate[mark coordinate] (j);
\path[jkplane] (90:\R) coordinate[mark coordinate] (k);
\path[ikplane] (0:\R) coordinate[mark coordinate] (i);
\path[ikplane] (30:\R) coordinate  (reference);

\LongitudePlane[kijk]{\angEl}{-70}
\LongitudePlane[kmijk]{\angEl}{-160}


\DrawLongitudeCircle[\R]{-25} 
\DrawLongitudeCircle[\R]{65}

\LongitudePlane[iijk]{\angEl + 49 }{35}
\LongitudePlane[jijk]{\angEl + 66.5}{-15.5}

\LongitudePlane[imijk]{\angEl + 11}{-35}

\path[kmijk] (30:\R) coordinate[mark coordinate] (mijk);
\path[iijk] (-69:\R) coordinate[mark coordinate] (ijk);

\draw (j) node[below right] {$\mathbf{j}$};
\draw (i)  +(0.5ex,-1ex) node[below right] {$\mathbf{i}$};
\draw (k) +(0ex,1ex)  node[above] {$\mathbf{k}$}; 
\draw (ijk) +(-1ex,0ex) node[above left] {$\frac{\mathbf{i} + \mathbf{j} + \mathbf{k}}{\sqrt{3}}$};

\draw (mijk) +(+2ex,-2.5ex) node[above right] {$\frac{\mathbf{i} - \mathbf{j} + \mathbf{k}}{\sqrt{3}}$};

\draw (reference) +(4ex,0ex) node[right] {$T_1$};
\draw (reference) +(-2ex,+0.2ex) node[left] {$T_2$};
\tikzset{iijk/.prefix style={scale=\R}}
\tikzset{jijk/.prefix style={scale=\R}}
\tikzset{kijk/.prefix style={scale=\R}}

\tikzset{kmijk/.prefix style={scale=\R}}
\tikzset{imijk/.prefix style={scale=\R}}
\draw[kijk] (0,1) arc(90:30:1); 
\draw[iijk] (-69:1) arc (-69:-121:1);
\draw[jijk]  (-20:1) arc (-20:-72:1);
\draw[imijk] (-121:1) arc (-121:-173:1);
\draw[kmijk] (0,1) arc(90:30:1);
\end{tikzpicture}
\caption{The fundamental domain for $\O^\times$ is $T_1 \cup T_2$ (up to a measure zero set).}
\label{fig:fund}
\end{figure}
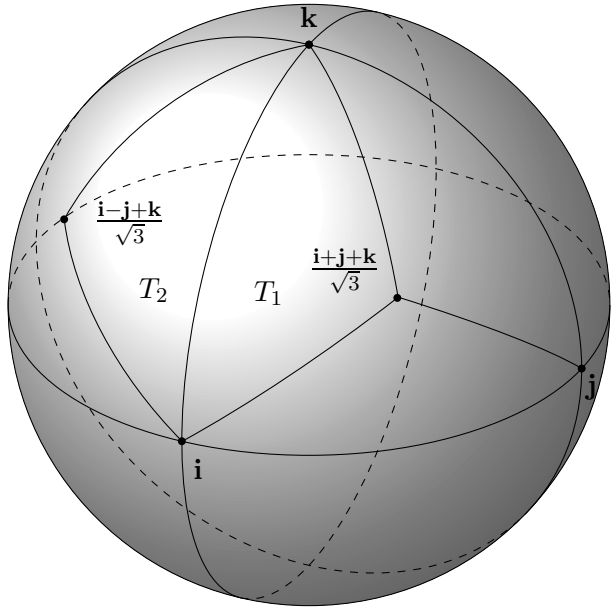
\end{center}
 
In the case that $f \in \H_l^{\O^\times}$ with $l$ \textit{even}, which will be our focus, the function $f$ is actually invariant under a larger group of symmetries which we will call $\G$. It turns out that $\G$ is generated by the projectivization of $\O^\times$ and reflection in the origin. The group $\G$ contains $\O^\times / Z(\O^\times)$ as an index 2 subgroup, so $\G$ has size 24. For us, the most important feature of $\G$ is that it contains the elements
\begin{align*}
R_x : (x , y , z) &\mapsto (-x , y ,z), \\
R_y : (x , y , z) &\mapsto (x , -y ,z) ,\\
R_z : (x , y , z) &\mapsto (x , y , -z) .
\end{align*}
In particular, the $\G$-invariance of $f$ implies that one can view $f$ as a function on $T_1 \cup T_2$ which has matching values on glued boundary pieces \textit{and} which is invariant under the reflection mapping $T_1$ to $T_2$. Now it should be clear that the difference between $l$ even and odd here is like the difference between the even and odd Maass forms in \cite{GRS}.
 We will crucially use the invariant arc of this reflection in what follows, however firstly we must give the necessary properties of nodal sets.

Recall that the nodal set of $f$ is the set
\begin{equation}
Z(f) = \{ v \in S^2 \: : \: f(v) = 0 \} = f^{-1}(0).
\end{equation}
A nodal domain is a connected component of $S^2 - Z(f)$. We write $\N(f)$ for the number of nodal domains of $f$.

The proofs of the following results appear in the paper of Cheng \cite{CHENG}.
\begin{thm}[Cheng] 
For any eigenfunction $f$ of $\Delta_{S^2}$
\begin{enumerate}
\item The nodal set of $f$ is a union of $C^2$-immersed circles.
\item When branches of the nodal set of $f$ cross, the tangent vectors at the crossing point form an equiangular system. In particular, finitely many branches cross at a given point.
\end{enumerate}
\end{thm}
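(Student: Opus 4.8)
The plan is to work locally: every eigenfunction of $\Delta_{S^2}$ lies in some $\H_l$ and is therefore the restriction of a polynomial, so $f$ is real-analytic on $S^2$, and it suffices to describe $Z(f)$ near an arbitrary point $p$ in a real-analytic chart. Wherever $\nabla f(p) \neq 0$, the implicit function theorem already presents $Z(f)$ near $p$ as a real-analytic embedded arc; so the whole content lies at a \emph{singular} zero, where $f(p) = 0 = \nabla f(p)$.

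At such a $p$ I would pass to geodesic normal coordinates $x = (x_1, x_2)$ centered at $p$ and expand $f$ in its convergent Taylor series $f = \sum_{j \geq N} P_j$, with $P_j$ homogeneous of degree $j$ and $N \geq 2$ the vanishing order. Matching the lowest-order homogeneous parts on the two sides of $\Delta_g f = \lambda f$ — using that in normal coordinates $\Delta_g - \Delta_0$ is a second-order operator all of whose coefficients vanish at $p$ — forces $\Delta_0 P_N = 0$. In two variables a harmonic homogeneous polynomial of degree $N$ equals, after rotating the coordinates and up to a nonzero constant, $\mathrm{Re}\big((x_1 + i x_2)^N\big) = r^N \cos(N\theta)$ in polar coordinates $x_1 + i x_2 = r e^{i\theta}$; its zero set is exactly the $N$ equiangular lines $\ell_1, \dots, \ell_N$ through the origin where $\cos(N\theta) = 0$, and $|\nabla P_N(x)| \asymp r^{N-1}$ for $x \neq 0$. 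This configuration is the model for $Z(f)$ at $p$.

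Transferring the model to $Z(f)$ itself is the crux. Termwise differentiation of the convergent series gives $\nabla f(x) = \nabla P_N(x) + O(|x|^{N})$, whence $|\nabla f(x)| \gtrsim |x|^{N-1}$ on some punctured neighborhood of $p$; in particular $p$ is isolated among singular zeros and $Z(f)$ is a smooth curve nearby. Moreover $f(r e^{i\theta})/r^N \to c_0 \cos(N\theta)$ uniformly in $\theta$ as $r \to 0$, with $c_0 \neq 0$, so $Z(f)$ is squeezed into an arbitrarily thin cone about the lines $\ell_j$; on a thin cone about $\ell_j$ the derivative of $f$ normal to $\ell_j$ stays nonzero (being a perturbation of the nonvanishing normal derivative of $P_N$ there), so the implicit function theorem realizes $Z(f)$ inside that cone as a real-analytic graph over $\ell_j$ passing through $p$. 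The resulting $N$ embedded arcs have tangent lines $\ell_1, \dots, \ell_N$ at $p$ and their union is $Z(f)$ near $p$: this is assertion (2), and only finitely many branches meet since $N < \infty$. Finally, the singular set $\{f = 0 = \nabla f\}$ is finite by compactness of $S^2$, $Z(f)$ is a compact $1$-manifold away from it, and combining this with the local picture at singular points exhibits $Z(f)$ as a finite union of real-analytically immersed (in particular $C^2$) circles, which is assertion (1).

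The step I expect to be the main obstacle is this last transfer: upgrading the leading-order identity ``$f = P_N + o(|x|^N)$'' to honest control of $\nabla f$ near $p$, thereby excluding spurious nodal arcs accumulating at $p$ and pinning down the tangent directions. In the real-analytic category this follows from convergence of the Taylor series, as above; for a general second-order elliptic eigenfunction it is precisely Bers' local similarity principle, which one can simply invoke (as in the cited work of Cheng). The remaining ingredients — the classification of planar harmonic homogeneous polynomials and the implicit function theorem arguments — are routine.
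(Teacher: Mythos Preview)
The paper does not prove this theorem: it states the result and simply refers the reader to Cheng \cite{CHENG} for the proof. Your sketch is a correct outline of precisely that argument --- the leading homogeneous part in normal coordinates is Euclidean-harmonic, hence $r^N\cos(N\theta)$ after rotation, and Bers' local similarity principle (which in the present real-analytic setting reduces, as you note, to termwise differentiation of a convergent Taylor series) upgrades this to the required local description of $Z(f)$.
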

Now we prove that, given invariance of a spherical harmonic $f$ under a reflection $R$, finding zeros of $f$ on the fixed equator of $R$ provides roughly as many nodal domains of $f$. Our argument is essentially the same as that of Ghosh, Reznikov and Sarnak in the proof of \cite[Theorem 2.1]{GRS}.
\begin{lemma} \label{lemma:euler} Suppose that $l$ is even and $f \in H^{\O^\times}_l$ has $N$ zeros on one of the geodesics $\{ x = 0 \}$, $\{ y = 0 \}$ or $\{ z = 0 \}$, and does not vanish identically on the geodesic. Then 
\begin{equation}
\N(f) \geq 1 + N/2.
\end{equation}
\end{lemma}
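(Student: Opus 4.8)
The plan is to use the Euler characteristic formula $V - E + F = 2$ for the sphere, applied to a suitable graph built from the nodal set $Z(f)$ together with the fixed geodesic $\gamma$ of the reflection $R$. Write $\gamma$ for whichever of $\{x=0\}$, $\{y=0\}$, $\{z=0\}$ is in question, and recall that $f$ is invariant under the reflection $R$ fixing $\gamma$, so $Z(f)$ is symmetric under $R$ and $\gamma \cap Z(f)$ consists of the $N$ zeros of the (non-identically-zero, by hypothesis) restriction $f|_\gamma$, which is a degree-$l$ trigonometric polynomial — in particular these zeros are finite in number. By Cheng's theorem the nodal set is a finite union of $C^2$-immersed circles with finitely many transverse self-intersections, so $Z(f) \cup \gamma$ is a finite embedded graph $G$ on $S^2$ once we take as vertices all crossing points of $Z(f)$, all points of $\gamma \cap Z(f)$, and (to be safe) one auxiliary point on each component of $Z(f)$ or of $\gamma$ meeting no other vertex.

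First I would set up the combinatorics of $G$. Every vertex of $G$ has even degree (at a crossing of nodal branches the degree is even by Cheng's equiangularity statement; at a point of $\gamma \cap Z(f)$, $\gamma$ contributes $2$ and the nodal branches contribute an even number unless $f|_\gamma$ changes sign — here one uses that $f$ vanishes to some order on each nodal branch and the local picture; and at auxiliary vertices the degree is $2$). Since every face of $G$ is a region bounded by nodal arcs and arcs of $\gamma$, and $\gamma$ is itself a union of nodal-set-free arcs plus the vertices on it, each face of $G$ is contained in a single nodal domain of $f$. Thus $\mathcal{N}(f) \geq (\text{number of faces of } G) - (\text{number of faces cut by } \gamma \text{ but lying in the same nodal domain})$. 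More cleanly, I would instead count: each nodal domain $D$ either meets $\gamma$ or not; if it does not meet $\gamma$ it still is a union of faces of $G$, and if it does meet $\gamma$, then since $\gamma$ is a geodesic circle reflected onto itself, the part of $\gamma$ inside $D$ is cut into arcs by the $N$ points of $\gamma\cap Z(f)$, and one relates the number of such arcs to the faces.

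The cleanest route, following Ghosh–Reznikov–Sarnak, is: let $G_0 = Z(f)$ alone (vertices = crossings plus one dummy point per isolated circle), with $V_0$ vertices, $E_0$ edges, $F_0 = \mathcal{N}(f)$ faces, so $V_0 - E_0 + F_0 = 2$; every vertex has degree $\geq 4$ except dummies of degree $2$, giving $2E_0 \geq 4V_0 - 2(\#\text{dummies})$ hence a relation $F_0 \geq 2 + (\text{something})$ — but this alone gives no lower bound in terms of $N$. So instead adjoin $\gamma$: form $G = G_0 \cup \gamma$. Adding $\gamma$ adds at most $1 + N + (\text{crossings of }\gamma\text{ with }Z(f))$ new vertices and subdivides $\gamma$ into that many edges; each transverse intersection of $\gamma$ with a nodal arc is one of the $N$ points (or is a tangency, a measure-zero degenerate case one argues away or absorbs). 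The point is that $\gamma\setminus Z(f)$ has exactly $N$ arcs (if $f|_\gamma$ changes sign $N$ times) or at most $N$ arcs, and each such open arc lies in a single nodal domain, with the two sides of the arc belonging to the same nodal domain (since $\gamma$ is not part of $Z(f)$ there). Running Euler's formula for $G$ and bookkeeping the contribution of these $N$ arcs — each arc, when added, either merges two faces or splits one, and the net effect on the face count combined with $R$-symmetry (the reflection identifies faces in pairs off $\gamma$) yields $\mathcal{N}(f) = F_0 \geq 1 + N/2$.

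The main obstacle I expect is the careful bookkeeping of degenerate configurations: tangencies between $\gamma$ and $Z(f)$, zeros of $f|_\gamma$ that are also crossing points of the nodal set, and nodal components disjoint from everything. Each of these is handled by a small perturbation argument or by the observation that such points only \emph{increase} the vertex count in a controlled way, and by Cheng's regularity the configuration is finite; but getting the inequality direction right at every special vertex (so that one never loses more than a factor of $2$ relative to $N$) is the delicate part, and is exactly where one invokes the $R$-invariance of $f$ to pair up the nodal domains not meeting $\gamma$ and to see that each of the $N$ subarcs of $\gamma$ genuinely separates, contributing its half to the count. I would model this bookkeeping directly on \cite[Theorem 2.1]{GRS}, adapting their planar-domain argument to the closed surface $S^2$ with the extra input that $\gamma$ is a single closed geodesic rather than a boundary arc.
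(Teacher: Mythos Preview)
Your plan has the right ingredients (Euler's formula, the reflection symmetry, the GRS template), but it is not a proof: the decisive step is left as unspecified ``bookkeeping'', and nothing you actually write down produces the factor $N/2$. Two specific points are missing or wrong.

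First, working with $G = Z(f)\cup\gamma$ on the whole sphere forces you to undo the damage $\gamma$ does to the face count (since faces of $G$ are not nodal domains), and your sentence ``each arc, when added, either merges two faces or splits one'' together with ``$R$-symmetry \ldots yields $\mathcal N(f)=F_0\geq 1+N/2$'' is an assertion, not an argument. The paper avoids this entirely by working on a single closed hemisphere $H_+$ bounded by $\gamma$. The graph there is $(Z(f)\cap H_+)\cup\gamma$, and every face of this planar graph is contained in a \emph{single} nodal domain (the only non-nodal edges are arcs of $\gamma$, and a short $R$-symmetry argument shows two faces in $H_+$ cannot lie in the same nodal domain). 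Hence $\mathcal N(f)\geq |F|$ with no subtraction or pairing needed.

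Second, the inequality that actually extracts $N/2$ is a \emph{degree bound}, not your ``every vertex has even degree'' (which on the full sphere only gives $E\geq V$, hence $F\geq 2$). In the hemisphere graph every vertex has degree $\geq 3$: interior vertices are nodal crossings (degree $\geq 4$), while each of the $N$ zeros on $\gamma$ has two incident $\gamma$-arcs and at least one nodal arc going into $H_+^\circ$. This last point is exactly where $R$-invariance and the hypothesis $f|_\gamma\not\equiv 0$ are used: if no nodal arc left $p\in\gamma$ into $H_+^\circ$, by symmetry none would enter $H_-^\circ$ either, forcing the local nodal set at $p$ to lie along $\gamma$ and hence $f|_\gamma$ to vanish on a neighborhood of $p$. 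From $2|E|\geq 3|V|$ and Euler's formula for the disk (discarding the exterior face), $|F|=1+|E|-|V|\geq 1+|V|/2\geq 1+N/2$, which is the lemma. Your proposal never isolates this degree-$\geq 3$ mechanism, and without it the Euler bookkeeping cannot close.
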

\begin{proof}
We consider the hemispheres $H_+ = \{ x \geq 0 \}$ and $H_- = \{ x \leq 0 \}$. Assume $f$ has $N$ zeros on $\d H_+ = \{ x = 0 \}$ ($f$ actually has the same number of zeros on each of the geodesics mentioned in the Lemma). Let $V$ consist of all the zeros of $f$ on $\d H_+$. Now add to $V$ all the points in $Z(f) \cap H_+$ which are crossing points (i.e. the nodal set does not look like $(0,1)$ in any neighbourhood of that point). We define a multigraph $G$ which has vertex set $V$ and an edge set $E$ with an edge between $v_1$ and $v_2$ each time
\begin{enumerate}
\item There is a nonbacktracking path in $Z(f) \cap H_+$ between $v_1$ and $v_2$ which does not pass through any other vertex. In this case we identify the edge with the underlying set of the path (which is homeomorphic to an interval).
\item The vertices $v_1$ and $v_2$ both lie on $\d H_+$ and are adjacent on $\d H_+$. In this case we identify the edge with the geodesic arc on $\d H_+$ along which $v_1$ and $v_2$ are adjacent.
\end{enumerate}
Thus $G$ comes with a planar embedding into $H^+$. We will write $F$ for the faces associated to this embedding. For a face $S \in F$  either $S$ does not meet $\d H_+$ along any edge, in which case $S \cup R_x S$ consists of two disjoint nodal domains of $f$, or $S$ meets $\d H_+$ along an edge, in which case $S \cup R_x S$ yields one nodal domain of $f$. As $f$ is $R_x$-invariant it is clear that distinct faces cannot lie in the same nodal domain. Therefore
\begin{equation}\label{eq:faceineq}
\N(f) \geq |F|.
\end{equation}
We will count vertices and edges and use Euler's formula $|V| - |E| + |F| = 1$ (throwing out the face exterior to $H_+$ viewed as a disc in $\R^2$). Every vertex in the multigraph $G$ has least $3$ incident edges as follows. If the vertex lies on $\d H_+$ then it has two incident arcs on $\d H_+$ and at least one other incident edge going into $H_+$ due to the invariance of the nodal set under the reflection $R_x$ and that $f$ is not identically zero on $\delta H_+$. If the vertex is in the interior of $H_+$ then it arose from a crossing point in which case it has more than $2$ incident edges. Therefore
\begin{equation}
|E| \geq \frac{3}{2} |V| 
\end{equation}
so by Euler's formula
\begin{equation}
|F| \geq 1 + |V|/2 \geq 1 + N/2
\end{equation}
as $|V| \geq N$. Together with \eqref{eq:faceineq} this gives the desired
\begin{equation}
\N(f) \geq 1 + N/2.
\end{equation}

\end{proof}
 
\section{Diophantine estimates}\label{diophantine}
In this section we present Diophantine estimates which will be key in finding sign changes amongst the values of the Hecke eigenfunctions on $S^2$. Recall that the spherical distance between points $x , y \in S^2$ is given by
\begin{equation}
d(x,y) = \cos^{-1}( \langle x,y \rangle_{\R^3} ). 
\end{equation}

The first Lemma is a direct result of Lemma 2.1 in the paper of VanderKam \cite{VAND}.
\begin{lemma}[VanderKam]\label{counting} For $x \in S^2$ let $N( x, m, \delta)$ denote the number of elements $\g \in \O$ whose norm is $m$ and $d(x, \g. x) \leq \delta$ or $d(x , \g.x) \geq \pi - \delta$. Then
\begin{equation}
N( x, m, \delta) \ll_\e  \left\{ \begin{array}{rl}
      m^\e(\delta^{1/2}m + 1) & \text{if } \delta < 1/m  \\
      m^{\e}(m^{1/2} + \delta^{2/3}m) & \text{else } .
    \end{array}\right.
\end{equation}
\end{lemma}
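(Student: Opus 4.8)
The plan is to recast $N(x,m,\d)$ as a count of lattice points near a $2$-plane in $\R^4$ and then invoke VanderKam's estimate \cite[Lemma 2.1]{VAND}; essentially all of the difficulty sits in that cited count, and on our side there is only an elementary geometric translation. To begin, write $\g = a + b\mathbf{i} + c\mathbf{j} + d\mathbf{k}$ for a generic $\g \in \O$ with $n(\g) = m$; the vector $(a,b,c,d)$ then lies in a fixed lattice in $\R^4$ (namely $\Z^4$ or its translate by $(\tfrac12,\tfrac12,\tfrac12,\tfrac12)$, the two cosets of $\O$ being handled identically) and satisfies $a^2+b^2+c^2+d^2 = m$. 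Put $w = b\mathbf{i} + c\mathbf{j} + d\mathbf{k}$, which we also regard as the vector $(b,c,d) \in \R^3$, so that $|w|^2 = m - a^2$. Conjugation $v \mapsto \g v \bar\g$ acts on the trace-zero quaternions $\R^3$ as the rotation about the axis $w/|w|$ through the angle $\theta$ with $\cos(\theta/2) = a/\sqrt m$ and $\sin(\theta/2) = |w|/\sqrt m$; since $\g\bar\g = n(\g) = m$, the map $x \mapsto \g.x = \g x \bar\g / m$ is exactly this rotation applied to $x$. Decomposing $x \in S^2$ into its components parallel and perpendicular to $w$ and using $1 - \cos\theta = 2|w|^2/m$, one obtains for every such $\g$ and every $x \in S^2$ the identity
\[
\langle x, \g.x \rangle = 1 - \frac{2\,|w \times x|^2}{m}, \qquad |w \times x|^2 = |w|^2 - \langle w,x\rangle^2 = (m - a^2) - \langle w,x\rangle^2 .
\]

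Since $d(x,\g.x) = \cos^{-1}\langle x,\g.x\rangle$, the identity shows that $d(x,\g.x) \le \d$ holds if and only if $|w \times x|^2 \le \tfrac m2(1-\cos\d) = m\sin^2(\d/2)$, and that $d(x,\g.x) \ge \pi - \d$ holds if and only if $a^2 + \langle w,x\rangle^2 \le \tfrac m2(1-\cos\d)$. The first condition says precisely that the vector $(a,b,c,d) \in \R^4$ lies within Euclidean distance $\sqrt m\,\sin(\d/2) \asymp \sqrt m\,\d$ of the $2$-plane $V := \R(1,0,0,0) \oplus \R(0,x_1,x_2,x_3)$; the second says that $(a,b,c,d)$ lies within the same distance of the orthogonal $2$-plane $V^\perp = \{0\} \oplus x^\perp$, where $x^\perp \subset \R^3$ is the plane orthogonal to $x$. (In each case the two spanning normal directions are orthonormal, so these quantities genuinely are distances to a plane; equivalently, the two conditions assert that $\g.x$ lies in the spherical cap of radius $\d$ about $x$, respectively about $-x$.) Consequently $N(x,m,\d)$ is at most the number of lattice points on the sphere $a^2+b^2+c^2+d^2 = m$ lying within Euclidean distance $O(\sqrt m\,\d)$ of a fixed $2$-plane in $\R^4$, summed over the two planes $V$ and $V^\perp$.

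That number is precisely of the shape bounded in \cite[Lemma 2.1]{VAND}. Applying that result with tube radius $R \asymp \sqrt m\,\d$ and summing the two contributions produces the claimed bound, the dichotomy $\d < 1/m$ versus $\d \ge 1/m$ corresponding to the regime split in VanderKam's estimate according to whether $R$ is, up to constants, below or above $m^{-1/2}$. It is worth noting that when $\d$ is of fixed positive order the asserted bound is $\asymp m^{1+\e}$ and is trivial, since $N(x,m,\d) \le |\O(m)| \ll_\e m^{1+\e}$; so only small $\d$ requires any argument.

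I expect the substantive obstacle to be VanderKam's lattice-point count itself, which I would not reprove: it combines a geometry-of-numbers bound for lattice points in a thin tube about a plane with an estimate for the number of representations of an integer by a quaternary quadratic form, and it is the only place where the arithmetic of $m$ — as opposed to the geometry of the action on $S^2$ — enters. On our side the only care needed is routine: the two cosets of the defining lattice of $\O$, and the degenerate cases $w = 0$ (which can occur only when $m$ is a perfect square, contributing $O(1)$) and $x$ parallel to a coordinate axis, none of which affects the stated estimate. I would also make a point of recording that VanderKam's bound is uniform over the choice of $2$-plane, which is what guarantees the uniformity of the final bound in $x \in S^2$.
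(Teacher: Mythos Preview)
Your proposal is correct and is exactly the paper's approach: the paper gives no argument beyond declaring the lemma ``a direct result of Lemma 2.1 in the paper of VanderKam \cite{VAND}'', and your computation $\langle x,\g.x\rangle = 1 - 2|w\times x|^2/m$ together with the identification of the two cap conditions as distance-to-plane conditions for $V$ and $V^\perp$ is precisely the elementary translation that underlies that citation. There is nothing to correct or add.
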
 
\begin{rmk} Lemma 2.1 in \cite{VAND} applies to any point $x \in S^2$. We will apply Lemma \ref{counting} to points $(a , b , 0) / \sqrt{a^2 + b^2}$ in which case the proof can be made slightly simpler than the one which VanderKam gives.
\end{rmk}  
As remarked the preceding estimate does not care about any algebraic properties of $x \in S^2$. To proceed we define the $\textit{height}$ $h(x)$ of $x \in S^2$ to be the least positive integer $h$ such that $\sqrt{h}x$ has integer coordinates (if it exists). The next Lemma says that if a point has small height, the elements of $\O$ with small norm and which do not fix the point or map it to its antipode move the point (and not close to its antipode).

\begin{lemma}\label{move}
Let $x$ in $S^2$ have height $h(x) \in \Z$. Then for $\g \in \O$, if $\g . x \notin \{ x , -x \}$ then
\begin{equation}
\pi - \frac{1}{\sqrt{n(\g) h(x)}} \geq d( \g . x ,x ) \geq \frac{1}{\sqrt{n(\g) h(x)}} .
\end{equation}
\end{lemma}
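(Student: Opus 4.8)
The plan is to work directly with integer coordinates. Suppose $x \in S^2$ has height $h = h(x)$, so that $\sqrt{h}\,x = (a,b,c)$ with $a,b,c \in \Z$ and $a^2 + b^2 + c^2 = h$. Given $\g \in \O$ with $n(\g) = m$, the image $\g.x = \g x \bar\g / n(\g)$ is a point of $S^2$ obtained by applying a rotation whose matrix, after clearing the denominator $m$, has \emph{integer} entries: indeed $\g x \bar\g$ depends bilinearly on the (half-integer) coordinates of $\g$ and linearly on $x$, and one checks that $m \sqrt{h}\,(\g.x)$ has integer coordinates. Write $\sqrt{h}\,x = u$ and $m\sqrt{h}\,(\g.x) = w$, both integer vectors, with $|u|^2 = h$ and $|w|^2 = m^2 h$. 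Then
\begin{equation}
\langle u, w \rangle = m h \cos d(\g.x, x),
\end{equation}
and $\langle u,w\rangle$ is an integer. The hypothesis $\g.x \notin \{x,-x\}$ says precisely that $\cos d(\g.x,x) \neq \pm 1$, i.e. $|\langle u,w\rangle| < mh$; since this quantity is an integer we get $|\langle u,w\rangle| \leq mh - 1$, hence
\begin{equation}
|\cos d(\g.x,x)| \leq 1 - \frac{1}{mh}.
\end{equation}
Converting back: $1 - \cos\theta \geq 1/(mh)$ gives the lower bound on $d(\g.x,x)$, and $1 + \cos\theta \geq 1/(mh)$ gives $d(\g.x,x) \leq \pi - (\text{something})$. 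Using the elementary inequality $1 - \cos\theta \geq \theta^2/2$ one would actually get $\theta \geq \sqrt{2/(mh)}$, which is stronger than claimed; the slightly weaker stated bound $\theta \geq 1/\sqrt{mh}$ follows from $1-\cos\theta \leq \theta^2/2 \cdot$(no, rather) — more carefully, from $\sin^2(\theta/2) = (1-\cos\theta)/2 \geq 1/(2mh)$ and $\sin(\theta/2) \leq \theta/2$ we obtain $\theta \geq \sqrt{2/(mh)} \geq 1/\sqrt{mh}$. The antipodal estimate is symmetric, applying the same reasoning to $1 + \cos d(\g.x,x) = 1 - \cos d(\g.x,-x)$, or equivalently replacing $w$ by $-w$.

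The one genuine point to verify carefully — and the step I expect to be the main (minor) obstacle — is the integrality claim: that $m\sqrt{h}\,(\g.x)$ has integer coordinates when $n(\g) = m$ and $\sqrt{h}\,x$ is integral. Here one must be a little careful because elements of $\O$ have \emph{half}-integer coordinates (the order $\O$ is the Hurwitz order, not the Lipschitz order), so a priori $\g x \bar\g$ only lies in $\tfrac14 \Z^3 \cdot \sqrt h^{-1}$-scaled lattice. The resolution is that the conjugation action $y \mapsto \g y \bar\g$ on the trace-zero part of $B(\R)$ is given, after scaling by $n(\g)$, by a matrix in $\mathrm{SO}(3)$ whose entries are the usual quadratic expressions in the coordinates of $\g$; when $\g \in \O$ these expressions, multiplied by the appropriate power of $2$, are integers, and because $n(\g) = m$ is \emph{odd} (our standing assumption), the factors of $2$ are units modulo $m$ and one can absorb them — concretely $m\sqrt h(\g.x) \in \Z^3$. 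This is exactly the kind of "complication at $2$" the paper has already flagged as being sidestepped by the oddness hypothesis, so it should go through cleanly; I would include a sentence or two spelling out the coordinate formula for $\g y \bar\g$ to make the integrality transparent.

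To summarize the order of steps: (1) fix integer representatives $u = \sqrt h\,x$ and $w = m\sqrt h\,(\g.x)$, citing the coordinate formula for quaternion conjugation and the oddness of $m$ for integrality; (2) observe $\langle u,w\rangle = mh\cos d(\g.x,x) \in \Z$ and $|\langle u,w\rangle| \leq mh$ always; (3) use $\g.x \neq \pm x$ to strengthen this to $|\langle u,w\rangle| \leq mh - 1$, so $|\cos d(\g.x,x)| \leq 1 - 1/(mh)$; (4) convert the cosine bound to the distance bound via the half-angle identity and $\sin t \leq t$, handling the two endpoints $+1$ and $-1$ symmetrically. The whole argument is, as the paper says, nothing more than the observation that two integers at distance less than $1$ coincide.
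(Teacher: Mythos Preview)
Your approach is the paper's: show that $mh\cos d(\gamma.x,x)$ lies in a lattice in $\R$, bound it strictly away from $\pm mh$ by the lattice gap, and convert to a distance bound via $\cos\theta \geq 1-\theta^2/2$ (equivalently your half-angle argument). The integrality step you flag as the main obstacle is simpler than you fear and does not use oddness of $m$: since $\O$ is a ring and $\sqrt{h}\,x$, having integer coordinates, lies in $\O$, the conjugate $\gamma(\sqrt{h}\,x)\bar\gamma$ lies in $\O$ and hence has half-integer coordinates---this is all the paper uses. Your stronger claim of integer coordinates is also correct, again without any oddness hypothesis: a trace-zero element of $\O$ has real part $0$, and the parity condition defining $\O$ then forces the remaining coordinates to be integers. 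The ``factors of $2$ are units modulo $m$'' reasoning is not the right mechanism and should be dropped.
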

\begin{proof}
Note that $\sqrt{h(x)} x$ has integer coefficients in $\R^3$, therefore for $\g \in \O(m)$ an easy calculation gives that $\g \sqrt{h(x)} x \bar{\g}$ has half integer coefficients in $\R^3$. If $\g . x \notin \{ x , -x \}$ then 
\begin{equation}
-n(\g)h(x) < \langle \g \sqrt{h(x)} x \bar{\g} , \sqrt{h(x)} x \rangle = n(\g) h(x) \langle \g . x ,x \rangle < n(\g) h(x)
\end{equation}
and the expression in the middle is a half integer. This gives 
\begin{equation*}
-n(\g)h(x) + 1/2 \leq n(\g) h(x) \langle \g . x ,x \rangle \leq n(\g) h(x) - 1/2 
\end{equation*} 
which yields the needed inequalities after noting 
\begin{equation}
1 - d(\g . x , x )^2 /2 \leq \langle \g . x ,x \rangle \leq 1 - \frac{1}{2 n(\g)h(x)},
\end{equation}
\begin{equation}
 -1 + (\pi - d(\g . x , x))^2 /2 \geq      \langle \g . x , x \rangle  \geq -1 +  \frac{1}{2 n(\g)h(x)}.
\end{equation}
\end{proof}

It remains in this section to control the number of $\g \in \O$ of given norm and with $\g . x \in \{ x, - x \}$, with a uniform bound through all $x \in S^2$ having any height and the norm of $\g$ having a bounded number of prime divisors - for our purposes 2 prime divisors suffices.

\begin{lemma}\label{stabilizer}
There is an absolute constant $N_0$ such that for any $x \in S^2$ with existent height $h(x)$ and $p,q$ (not necessarily distinct) primes the number of $\g \in \O$ with $n(\g) = pq$ and $\g . x \in \{ x , -x \} $ is bounded by $N_0$. In other words for $x$ having any height $h(x) < \infty$
\begin{equation}
N(x, pq, 0 ) \ll 1.
\end{equation}
\end{lemma}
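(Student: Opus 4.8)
The plan is to recognize the $\gamma$ in question as lattice points of bounded norm inside an imaginary quadratic field, plus one translate of such a set, and then to invoke the classical uniform bound on the unit group of an imaginary quadratic field. Since $\g . y = \g y \bar{\g}/n(\g) = \g y \g^{-1}$, the condition $\g . x = x$ says exactly that $\g$ commutes with $x$, and $\g . x = -x$ says exactly that $\g$ anticommutes with $x$. Set $w = \sqrt{h(x)}\, x$, an integral trace-zero quaternion with $w^2 = -h(x)$; commuting (resp.\ anticommuting) with $x$ is the same as commuting (resp.\ anticommuting) with $w$. The centralizer of $w$ in $B(\Q)$ is the imaginary quadratic field $K := \Q(w) \cong \Q(\sqrt{-h(x)})$ (here $-h(x) < 0$ is what makes it imaginary), so any $\g \in \O$ with $\g . x = x$ lies in $\O \cap K$, which is a $\Z$-order in $K$ and hence is contained in the maximal order $\O_K$. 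On the other hand, a $\g$ anticommuting with $w$ must be a trace-zero quaternion orthogonal to $w$, so it lies in the $\Q$-plane $A$ of trace-zero quaternions perpendicular to $w$.

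For the counting, first suppose $\g . x = x$: then $\g$ runs over elements of $\O_K$ with $n(\g) = N_{K/\Q}(\g) = pq$, and the number of these is at most $|\O_K^\times|$ times the number of integral ideals of $\O_K$ of norm $pq$. That number of ideals is at most $\tau(pq) \leq 4$, and $|\O_K^\times| \leq 6$ for every imaginary quadratic field, so this case contributes at most $24$. Now suppose $\g . x = -x$: if no $\g \in \O \cap A$ has norm $pq$ there is nothing to count, so fix one such $\g_0$. For any other such $\g$, the product $\g\bar{\g_0} = -\g\g_0$ lies in $\O$ (an order is closed under multiplication), lies in $K$ (the product of two trace-zero quaternions perpendicular to $w$ has its cross-product term a multiple of $w$ and its inner-product term a scalar, so it lies in $\Q + \Q w = K$), and has norm $n(\g)n(\g_0) = (pq)^2$. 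Since $\g \mapsto \g\bar{\g_0}$ is injective — one recovers $\g = (\g\bar{\g_0})\g_0/(pq)$ — this case contributes at most the number of elements of $\O_K$ of norm $(pq)^2$, which by the same estimate is $\leq 6\,\tau((pq)^2) \leq 54$. Adding the two cases, $N(x, pq, 0) \leq 78$, so the lemma holds with any absolute $N_0 \geq 78$.

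The one place that needs care is the case $\g . x = -x$: these $\g$ do not form a subring, so they cannot be counted directly inside a quadratic order, and the device of multiplying two of them to land back in $\O_K$ is exactly what closes the argument. Everything else is classical — that an order is contained in the maximal order, and that imaginary quadratic fields have at most six units — and no information about $x$ beyond $h(x) < \infty$ enters the final bound, which is precisely the uniformity the lemma asserts. (The hypothesis that $pq$ is odd, forced by the paper's standing convention on Hecke parameters, is not needed here.)
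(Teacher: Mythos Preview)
Your proof is correct and follows essentially the same route as the paper's: embed the stabilizer of $x$ into the ring of integers of $K=\Q(\sqrt{-h(x)})$, bound elements of fixed norm there via the uniform unit bound and an ideal count, and handle the antipodal case by multiplying two such $\gamma$'s to land back in $K$. The only cosmetic differences are that the paper multiplies by $2$ before passing to $\O_K$ (your integrality argument shows this is unnecessary) and that you make the constants explicit.
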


\begin{proof}
With everything as in  Lemma \ref{stabilizer}, the point $\sqrt{h(x)}x$ is contained in the image of an embedding of normed $\Q$-algebras $\iota : F = \Q(\sqrt{-h(x)}) \hookrightarrow B(\Q)$, with $\iota(\sqrt{-h(x)}) = \sqrt{h(x)}x$. The rotations in $B(\Q)$ fixing $x$ are precisely those in $\iota(F)$. Let $\O_F$ denote the maximal order in $F$.

The result will follow quickly from the fact that for any quadratic imaginary number field there is an absolute constant $\Omega$ such that the number of elements of $\O_F$ of norm either $4 p q$ or $4p^2 q^2$ is bounded by $\Omega$. Consider the case of counting elements of norm $4p^2q^2$. Any element $x \in \O_F$ of norm $4p^2q^2$ generates an ideal $(x)$ of the same norm. We have then uniquely
\begin{equation}
(x)  = \prod_{i} \P_i
\end{equation}
with each $\P_i$ lying over $2$, $p$ or $q$ and at most 2 of the $\P_i$ lying over each of  $2$, $p$, $q$. There are therefore at most $3^3 = 27$ choices (if $p = q$ there are even less choices) for the principal ideal $(x)$ and each principal ideal corresponds to at most $|\O_F^\times |$ elements. As $| \O_F^{\times} |$ has a universal bound for quadratic imaginary number fields $F$ these arguments give a universal bound for the number of elements of $\O_F$ of norm $4p^2q^2$. The case of norm $4pq$ is similar.

If $\g \in \O(pq)$ and $\g  . x  = x$ then $2 \g \in \iota(\O_F)$ with norm $4 p q$, so there are at most $\Omega$ possibilities for such $\g$.

Suppose there is some $\g_0 \in \O(pq)$ such that $\g_0 . x = - x$ (or else we are done). Then for any $\g_1 \in \O(pq)$ with $\g_1 . x = -x$ we have $\g_1 . \g_0 . x = x$ and hence $2 \g_1 \g_0 \in \iota(\O_F)$ with norm $4 p^2 q^2$. Moreover as $\g_1$ ranges while $\g_0$ is fixed the elements of $\O_F$ produced are distinct. Hence there are at most $\Omega$ possibilities for $\g \in \O(pq)$ with $\g x = -x$. This concludes the proof.
\end{proof}

\section{Kernel estimates}\label{summationestimates}
Recall the decomposition 
\begin{equation}
L^2(S^2) = \overline{\bigoplus}_l \H_l
\end{equation}
and denote by $P_l$ the projection onto the space $\H_l$. It is well known that $P_l$ is a integral operator with kernel given by the formula
\begin{equation}\label{eq:kernel1}
[P_l \phi ] (x) = \frac{2l+1}{4\pi} \int_{S^2} p_l(\langle x,y \rangle ) \phi(y) dS^2(y) , \quad \phi \in L^2(S^2)
\end{equation}
where $p_l$ is the $l$th Legendre polynomial. We will look for estimates for sums
\begin{equation}
\sum_{\g \in \O(m)} p_l( \langle \g . x , y \rangle )
\end{equation}
for $x$ close to $y$ in anticipation of the upcoming annihilator method. We need some facts about $p_l$ to get started. Firstly
\begin{equation}
p_l(1) = (-1)^l p_l(-1) = 1
\end{equation}
gives the value of $p_l( \langle x,y \rangle )$ when $y = x , -x$. 

\textit{In what follows, we will restrict to $l$ even}. 

In particular for us $p_l(1) = p_l(-1) = 1$. In general $p_l$ satisfies the bound
\begin{equation}\label{eq:plsize}
p_l(\cos(\theta)) \ll \min \left( 1 , (l\sin \theta)^{-1/2} \right)
\end{equation}
which follows from \cite[p. 165 Theorem 7.3.3]{SZEGO} together with $p_l(\cos(\theta)) \leq 1$ \cite[ p. 164 Theorem 7.3.1]{SZEGO}. It can be deduced from `Hilb's asymptotics' \cite[ p. 195 Theorem 8.21.6]{SZEGO} that  
\begin{equation}\label{eq:value}
p_l \left( \cos \left( \frac{3\pi}{2l + 1} \right) \right) \asymp J_0 \left( \frac{3\pi}{2} \right) \approx -0.26
\end{equation}
as in the B.A. Thesis of Nastasescu \cite{NAST} (following Nazarov and Sodin \cite{NS}). Here $J_0$ denotes the zeroth Bessel function. As the value will be repeatedly used we introduce notation
\begin{equation}
W_l = \frac{ 3\pi }{2l+1} ,
\end{equation}
 which can be thought of as the first wavelength of the spherical eigenfunction of energy $l(l+1)$.  
 
Some of the estimates in this section depend on `integration by parts' arguments. It will therefore be convenient for purely technical reasons to prepare the next Lemma which follows straightforwardly from Lemma \ref{counting} and applications of the spherical triangle inequality; the proof is omitted.
\begin{lemma} \label{book}
When $l \gg m$ with sufficiently large implied constant, $x , y \in S^2$ and $d(x,y) \leq W_l = \frac{3\pi}{2l+1}$ the quantity
\begin{equation}
N( x, y, m, \delta) \equiv | \{ \g \in \O : \: n(\g) = m , \: d(\g.x , y) \leq \delta \text{ or } d(\g.x , y) \geq \pi - \delta  \: \}|
\end{equation}
satisfies
\begin{equation}
N( x, y, m, \delta) \ll_\e  \left\{ \begin{array}{rl}
      m^{\e} & \text{if } \delta < 1/l \\
      m^\e(\delta^{1/2}m + 1) & \text{if } 1/ l \leq \delta < 1/m - W_l \\
      m^{\e}(m^{1/2} + \delta^{2/3} m)   & \text{if } 1/m - W_l \leq \delta  .
    \end{array}\right.
\end{equation}
\end{lemma}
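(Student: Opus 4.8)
The plan is to reduce $N(x,y,m,\delta)$ to the quantity $N(x,m,\cdot)$ of Lemma \ref{counting} by slightly enlarging the radius, exploiting that $y$ lies within $W_l$ of $x$, and then to read off the three stated cases from the two cases of Lemma \ref{counting}.

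\emph{Reduction.} First I would set $\delta' = \delta + W_l$ and check that $N(x,y,m,\delta) \le N(x,m,\delta')$. Indeed, if $\g \in \O(m)$ has $d(\g.x,y) \le \delta$, then the spherical triangle inequality gives $d(\g.x,x) \le d(\g.x,y) + d(y,x) \le \delta + W_l = \delta'$; and if instead $d(\g.x,y) \ge \pi - \delta$, then $d(\g.x,-y) = \pi - d(\g.x,y) \le \delta$, so $d(\g.x,-x) \le d(\g.x,-y) + d(-y,-x) \le \delta + W_l = \delta'$, i.e.\ $d(\g.x,x) \ge \pi - \delta'$. Either way $\g$ is counted by $N(x,m,\delta')$, so the inequality holds.

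\emph{Locating $\delta'$ among the thresholds.} Next I would note that since $W_l = 3\pi/(2l+1) \asymp 1/l$ and $l \gg m$ with a sufficiently large implied constant, one has $W_l \le \tfrac12\delta$ whenever $\delta \ge 1/l$, and $W_l \le \tfrac1{2m}$. Consequently, in the first range ($\delta < 1/l$) one gets $\delta' \ll 1/l < 1/m$; in the second range ($1/l \le \delta < 1/m - W_l$) one gets $\delta \le \delta' < 1/m$ with $\delta' \asymp \delta$; and in the third range ($\delta \ge 1/m - W_l \ge \tfrac1{2m}$) one gets $\delta' \ge 1/m$ with $\delta' \asymp \delta$. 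Then I would apply Lemma \ref{counting}: in the first two ranges $\delta' < 1/m$, so $N(x,m,\delta') \ll_\e m^\e((\delta')^{1/2}m + 1)$, which for $\delta < 1/l$ is $\ll_\e m^\e(ml^{-1/2}+1) \ll_\e m^\e$ (in the regime where the lemma is applied $l$ dominates $m^2$) and for $1/l \le \delta < 1/m - W_l$ is $\ll_\e m^\e(\delta^{1/2}m + 1)$; in the third range $\delta' \ge 1/m$, so $N(x,m,\delta') \ll_\e m^\e(m^{1/2} + (\delta')^{2/3}m) \ll_\e m^\e(m^{1/2} + \delta^{2/3}m)$. Combined with the reduction this gives exactly the claimed bound.

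\emph{Main obstacle.} There is no substantive obstacle; all the arithmetic content is imported from Lemma \ref{counting}. The only point requiring care is the elementary bookkeeping in the second step, namely verifying that $W_l$ is negligible compared with $\delta$ and with $1/m$ throughout the stated ranges, so that $\delta$ and $\delta' = \delta + W_l$ are interchangeable up to absolute constants and the single threshold $\delta' \lessgtr 1/m$ in Lemma \ref{counting} maps correctly onto the three regimes here. This is precisely where the hypothesis $l \gg m$ (with large implied constant) enters.
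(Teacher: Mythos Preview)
Your approach is exactly what the paper intends: it explicitly says the lemma ``follows straightforwardly from Lemma \ref{counting} and applications of the spherical triangle inequality'' and omits the proof, and your reduction $N(x,y,m,\delta)\le N(x,m,\delta+W_l)$ via the triangle inequality, followed by reading off the cases of Lemma \ref{counting}, is precisely that argument. The bookkeeping in the second and third ranges is correct: in the middle range $\delta'=\delta+W_l<1/m$ with $\delta'\asymp\delta$, and in the top range $\delta'=\delta+W_l\ge 1/m$ with $\delta'\asymp\delta$ since $\delta\ge 1/m-W_l\ge 1/(2m)$.

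One honest remark on the first range: as you note parenthetically, getting $m^\e$ from $m^\e((\delta')^{1/2}m+1)$ with $\delta'\asymp 1/l$ requires $m/\sqrt{l}\ll 1$, i.e.\ $l\gg m^2$, not merely $l\gg m$ as stated. This is a slight imprecision in the lemma's hypotheses rather than a flaw in your argument; the range $\delta<1/l$ is in fact never invoked in the paper (the summation in \eqref{eq:est4} begins at $\delta\ge 1/\sqrt{mh(x)}-W_l>1/l$ by \eqref{eq:beginslate}), and in the actual application one has $m\le l^{1/2-4\e}$ so $l\gg m^2$ holds anyway.
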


We can now give estimates for sums of the kernel $p_l$ over Hecke elements.

\begin{lemma}\label{sumestimate}
There exist $C_1$ and $C_2$ absolute such that for even $l$ and $x,y \in S^2$, if the following hold:
\begin{enumerate}
\item $d(x,y) \leq W_l = \frac{3\pi}{2l+1}$,
\item $l \geq C_1 m$,
\item The height of $x$, $h(x)$ exists and $l \geq C_2 \sqrt{ m h(x) }$,
\end{enumerate}
then
\begin{equation}\label{eq:sum}
\sum_{\g \in \O(m) } p_l (\langle \g . x , y \rangle ) = p_l ( \langle x , y \rangle ) N(x,m,0)   + O_\e( m^\e l^{-1/2}( mh^\e + m^{1/4}h(x)^{1/4} ) ).
\end{equation}
\end{lemma}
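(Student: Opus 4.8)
The plan is to split off the contribution of those $\g \in \O(m)$ with $\g.x \in \{x,-x\}$ — these will produce the displayed main term — and to estimate the remaining terms by dyadically decomposing according to the distance $d(\g.x,y)$, using Lemma \ref{move} to keep this distance away from $0$ and $\pi$, the bound \eqref{eq:plsize} for the size of $p_l$, and Lemma \ref{book} to count the number of $\g$ in each dyadic range.

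Write $h = h(x)$. For the main term, note that $p_l$ is even since $p_l(-t) = (-1)^l p_l(t)$ and $l$ is even; hence any $\g$ with $\g.x = x$ contributes $p_l(\langle x,y\rangle)$, and any $\g$ with $\g.x = -x$ contributes $p_l(-\langle x,y\rangle) = p_l(\langle x,y\rangle)$. By definition there are exactly $N(x,m,0)$ such $\g$, so they contribute $p_l(\langle x,y\rangle)N(x,m,0)$. It remains to bound the sum of $p_l(\langle\g.x,y\rangle)$ over the $\g \in \O(m)$ with $\g.x \notin\{x,-x\}$. For such $\g$, Lemma \ref{move} gives $(mh)^{-1/2} \le d(\g.x,x) \le \pi - (mh)^{-1/2}$; since $d(x,y) \le W_l$ and, by condition (3) with $C_2$ large, $W_l \le \tfrac12(mh)^{-1/2}$, this forces $\phi(\g) := \min\!\big(d(\g.x,y),\,\pi-d(\g.x,y)\big) \ge \delta_0 := \tfrac12(mh)^{-1/2}$. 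Condition (3) with $C_2 \ge 2$ also makes $\delta_0 \ge 1/l$, and condition (2) with $C_1$ large puts us in the range $l \gg m$ required by Lemma \ref{book} and makes $1/m - W_l \asymp 1/m$.

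Now decompose these $\g$ into dyadic shells $S_j = \{\g : \phi(\g) \in [2^j\delta_0, 2^{j+1}\delta_0)\}$, starting at $j=0$ and running up to the one with $2^j\delta_0 \asymp 1$. Since $\sin d(\g.x,y) \asymp \phi(\g)$, \eqref{eq:plsize} gives $|p_l(\langle\g.x,y\rangle)| \ll (l\,2^j\delta_0)^{-1/2}$ on $S_j$, while $|S_j| \le N(x,y,m,2^{j+1}\delta_0)$ is bounded by Lemma \ref{book}. Multiplying these two bounds and summing over $j$: the shells with $2^j\delta_0 < 1/m$ fall under the middle case of Lemma \ref{book}, and their contributions are $\ll_\e m^\e\big(m l^{-1/2} + (l\,2^j\delta_0)^{-1/2}\big)$; the first term is the same for each of the $O(\log(mh))$ such shells, summing to $\ll_\e m^{1+\e}h^\e l^{-1/2}$, and the second sums geometrically to $\ll (l\delta_0)^{-1/2} \ll m^{1/4}h^{1/4}l^{-1/2}$. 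The shells with $2^j\delta_0 \ge 1/m$ fall under the last case of Lemma \ref{book}, contributing $\ll_\e m^\e\big(m^{1/2}(l\,2^j\delta_0)^{-1/2} + m l^{-1/2}(2^j\delta_0)^{1/6}\big)$; the first sums geometrically (decreasing) to $\ll m l^{-1/2}$ at $2^j\delta_0 \asymp 1/m$, the second geometrically (increasing) to $\ll m l^{-1/2}$ at $2^j\delta_0 \asymp 1$. Collecting everything yields the error term $O_\e\big(m^\e l^{-1/2}(m h^\e + m^{1/4}h^{1/4})\big)$.

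The only real difficulty is organizational: one must choose the dyadic variable $\phi(\g)$ so that the evenness of $p_l$ allows the regime $d(\g.x,y)\approx\pi$ to be handled together with $d(\g.x,y)\approx 0$ through the single counting function of Lemma \ref{book}; one must match the breakpoints $1/l$ and $1/m-W_l$ of that Lemma with the location $\delta_0\asymp(mh)^{-1/2}$ of the innermost shell — which is exactly what conditions (2) and (3) arrange; and one must check that the single non-geometric sum, the one giving $m^{1+\e}h^\e l^{-1/2}$, loses only a harmless $\log(mh)\ll_\e(mh)^\e$. All remaining steps are routine.
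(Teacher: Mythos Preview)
Your proof is correct and follows essentially the same route as the paper's: isolate the $\g$ with $\g.x\in\{x,-x\}$ as the main term, use Lemma \ref{move} to push the remaining $\g$ to $\phi(\g)\gg (mh)^{-1/2}$, then control the tail via the size bound \eqref{eq:plsize} and the counts of Lemma \ref{book}, splitting at $\delta\asymp 1/m$. The only cosmetic difference is that the paper packages the tail as a Stieltjes sum $\sum\delta^{-1/2}\,dN(x,y,m,\delta)$ and appeals to ``integration by parts'', whereas you carry out an explicit dyadic decomposition; these are interchangeable bookkeeping devices and lead to the same error term.
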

 
\begin{proof}
With everything as above, Lemma \ref{move} implies that if $\g \in \O(m)$ and $\g . x \notin \{ x , - x \}$ then
\begin{equation}
  \frac{1}{\sqrt{m h(x)}} \leq d( \g . x ,x ) \leq  \pi - \frac{1}{\sqrt{m h(x)}}
\end{equation} so that by the triangle inequality
\begin{equation}
 \frac{1}{\sqrt{m h(x)}} - W_l \leq d(\g  . x , y ) \leq \pi - \frac{1}{\sqrt{m h(x)}} + W_l .
\end{equation}
If $l \geq C_2 \sqrt{ m h(x) }$ with $C_2$ large enough then 
\begin{equation}\label{eq:beginslate}   
\frac{1}{\sqrt{m h(x)}} - W_l > \frac{1}{l} ,
\end{equation}
we pick such a $C_2$. Consequently, aside from $\g$ with $\g . x \in  \{ x , - x \}$, summation in \eqref{eq:sum} begins late enough to get a good bound. The previous discussion gives
\begin{align}\label{eq:est1}
\sum_{\g \in \O(m) } p_l (\langle \g . x , y \rangle ) &= \sum_{\substack {\g \in \O(m) \\ \g.x \in \{ x , -x \}}}   p_l (\langle \g . x , y \rangle)  +\\
& \sum_{\substack{ \g \in \O(m) \\  \frac{1}{\sqrt{m h(x)}} - W_l \leq d(\g . x , y )  \leq \pi - \frac{1}{\sqrt{m h(x)}} + W_l}}   p_l (\langle \g . x , y \rangle ) . \nonumber
\end{align}
The first term on the right hand side can be written
\begin{equation}\label{eq:est2}
\sum_{\substack {\g \in \O(m) \\ \g.x \in \{ x , -x \}}}   p_l (\langle \g . x , y \rangle)  = p_l (\langle  x , y \rangle) N(x,m,0)
\end{equation}
because $l$ is even. The remainder is estimated in size by the triangle inequality and the known bound \eqref{eq:plsize} for $p_l$:
\begin{equation}\label{eq:est3}
\left| \sum_{\substack{ \g \in \O(m) \\ \frac{1}{\sqrt{m h(x)}} - W_l \leq d(\g . x , y )  \leq \pi - \frac{1}{\sqrt{m h(x)}} + W_l}}   p_l (\langle \g . x , y \rangle )\right| \ll  l^{-1/2} \sum_{\delta \geq   \frac{1}{\sqrt{m h(x)}} - W_l } \delta^{-1/2} dN(x,y,m,\delta) ,
\end{equation}
where 
\begin{equation}
dN(x,y,m,\delta) \equiv  | \{ \g \in \O : \: n(\g) = m , \: d(\g.x , y) = \delta \text{ or } d(\g.x , y) = \pi - \delta  \: \}| 
\end{equation}
and the summation of $\delta$ values is over a finite set containing all attained values of $d(\g.x , y )$ or $\pi-d(\g.x,y)$ for $\g \in \O(m)$.
We split up summation into the ranges appearing in Lemma \ref{book}, and specify $C_1$ so that the results from Lemma \ref{book} apply. Then
\begin{align}\label{eq:est4}
 \sum_{\delta \geq   \frac{1}{\sqrt{m h(x)}} - W_l} \delta^{-1/2} dN(x,y,m,\delta) &= \sum_{  \frac{1}{\sqrt{m h(x)}} - W_l \leq \delta < \frac{1}{m} - W_l }  \delta^{-1/2} dN(x,y,m,\delta) \\
&+ \sum_{  \frac{1}{m} - W_l \leq \delta  }  \delta^{-1/2} dN(x,y,m,\delta)  \nonumber \\
& \ll_\e m^\e ( mh^\e  + m^{1/4}h(x)^{1/4} ) \nonumber
\end{align}
by an `integration by parts' argument making use of \eqref{eq:beginslate} and the estimates of Lemma \ref{book}. We remark that if $h(x) \leq m$ then the range  $\frac{1}{\sqrt{m h(x)}} - W_l \leq \delta < \frac{1}{m} - W_l$ is empty and the above calculation is still valid, but simpler. Putting our estimates \eqref{eq:est1}, \eqref{eq:est2}, \eqref{eq:est3} and \eqref{eq:est4} together we get the desired 
\begin{equation}
\sum_{\g \in \O(m) } p_l (\langle \g . x , y \rangle ) =  p_l (\langle  x , y \rangle) N(x,m,0) + O_\e( m^\e l^{-1/2} ( mh^\e  + m^{1/4}h(x)^{1/4} )  ) .
\end{equation}
\end{proof}  
  
\section{The spectral annihilator}
The kernel of the orthogonal projection $P_l : L^2(S^2) \to \H_l$ can be written in two ways. Firstly we extend the orthonormal basis $\B_l$ of Hecke eigenfunctions in $\H_l^{\O^\times}$ to an orthonormal basis $\hat{\B_l}$ of $\H_l$. Then the kernel of $P_l$ is implicitly given by the formula 
\begin{equation}\label{eq:kernel2}
[P_l \varphi ] (x) =  \int_{S^2} \left( \sum_{\phi \in \hat{\B_l}} \phi(x)\phi(y) \right) \varphi(y) dS^2(y) , \quad \varphi \in L^2(S^2) .
\end{equation}
On the other hand we have the expression for the kernel from equation \eqref{eq:kernel1} which when taken together with \eqref{eq:kernel2} gives the following special form of the pre-trace formula on $S^2$:
\begin{equation}\label{eq:pretrace}
\frac{4\pi}{2l + 1} \sum_{\phi \in  \hat{\B_l}} \phi(x)\phi(y) = p_l( \langle x , y \rangle) .
\end{equation} 
The basis $\B_l$ consists of $\phi$ which are eigenfunctions of all the $T_m$ for $m$ odd, with $T_m \phi = \lambda_\phi(m) \phi$. We introduce a new normalized variable $\eta_\phi(m) \equiv \lambda_\phi(m)/ m^{1/2}$. The $\eta_\phi(m)$ are real valued and Deligne's bound \cite{DEL} on the coefficients of modular forms\footnote{This is included to orient the reader, we do not use it.} gives
\begin{equation}
\eta_{\phi}(m) \ll_\e m^{\e} .
\end{equation} 
For any function $\a : \mathbb{N} \to \R$ of sufficient decay (in what follows $\a$ will be of finite support) we can form the operator
\begin{equation*}
\left( \sum_{n \textrm{ odd}}  \frac{ \a(n) T_n }{ \sqrt{n} } \right)^2
\end{equation*}
and apply it to the $x$ variable on both sides of \eqref{eq:pretrace} to obtain
\begin{equation}\label{eq:twistedpretrace}
\frac{4\pi}{2l + 1} \sum_{\phi \in \B_l} \left( \sum_n \a(n) \eta_{\phi}(n) \right)^2 \phi(x)\phi(y) = \sum_{n,m} \frac{\a(n)\a(m)}{\sqrt{nm}} [T_n T_m p_l( \langle \bullet ,y\rangle) ](x) .
\end{equation} 
Note that the sum on the left hand side of \eqref{eq:twistedpretrace} is over $\B_l$ as the $T_m$ annihilate the orthogonal complement of $\H_l^{\O^\times}$ in $\H_l$ when $m$ is odd.
From now on we will write, for $\phi \in \B_l$,
\begin{equation}
A_\a(\phi) \equiv   \sum_n  \a(n)\eta_\phi(n) 
\end{equation}
which when squared gives the coefficient of $\phi(x)\phi(y) $ in \eqref{eq:twistedpretrace}.
Using the Hecke recursion \eqref{eq:recursion} we can write 
\begin{equation}
\frac{4\pi}{2l + 1} \sum_{\phi \in \B_l} A_\a(\phi)^2 \phi(x)\phi(y) = | \O^\times |  \sum_{n,m} \a(n)\a(m) \sum_{d | (n,m)} \frac{d}{ \sqrt{nm} } [T_{nm/d^2} p_l( \langle  \bullet ,y \rangle ) ](x) ,
\end{equation}
and recalling the definition of the Hecke operator $T_{nm/d^2}$ from \eqref{eq:hecke} we obtain our main spectral equation:
\begin{equation}\label{eq:spectral}
\frac{4\pi}{| \O^\times |(2l + 1)} \sum_{\phi \in \B_l} A_\a(\phi)^2 \phi(x)\phi(y) =  
 \sum_{n,m} \a(n)\a(m) \sum_{d | (n,m)} \frac{d}{ \sqrt{nm} } \sum_{\g \in \O(\frac{nm}{d^2})} p_l ( \langle \g . x , y \rangle ). 
\end{equation}

The Hecke summation estimates which we have prepared in Section \ref{summationestimates} are precisely for the purpose of controlling the right hand side of \eqref{eq:spectral}. In what follows we aim to find large subspaces of $l^2$ on which the quadratic form in \eqref{eq:spectral} is negative definite. This will enable us to find enough negative contributions from the left hand side, hence sign changes of eigenfunctions between $x$ and $y$. As a technical aside, in what follows we will take the approach of confining $\e$ considerations to the support of $\a$.
  
\begin{lemma}[Key Lemma for `on average' result]\label{key}
There exists an absolute $K > 1$ such that for any $\e > 0$ there exists $L = L(\e)$ with
\begin{equation}
 \sum_{\phi \in \B_l} A_\a(\phi)^2 \phi(x)\phi(y) < 0 
\end{equation}
whenever
\begin{enumerate}
\item  $l$ is even and $l > L(\e)$ 
\item  $x,y \in S^2$ with $d(x,y) = W_l$ and $h(x) \leq l^{3/2}$
\item  $\| \a \|_{l^2} = 1 $ and $\mathrm{supp}(\a) \subseteq [l^{(1/4 -2\e)/K}, l^{1/4-2\e}] \cap \{ \mathrm{primes} \}$.
\end{enumerate}
\end{lemma}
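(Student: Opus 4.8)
The plan is to apply the spectral identity \eqref{eq:spectral} to the vector $\a$, show that its right hand side is strictly negative under hypotheses (1)--(3), and then deduce the conclusion since the constant $\frac{4\pi}{|\O^\times|(2l+1)}$ multiplying $\sum_{\phi\in\B_l}A_\a(\phi)^2\phi(x)\phi(y)$ is positive. Throughout, $N(x,m,0)$ denotes the number of $\g\in\O(m)$ with $\g.x\in\{x,-x\}$, in the notation of Lemma \ref{counting}.

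First I would isolate the main term. Since $\mathrm{supp}(\a)$ consists of primes, the right hand side of \eqref{eq:spectral} breaks into three families: from $n=m=p$ one gets a $\sum_{\g\in\O(p^2)}$ piece with coefficient $\a(p)^2/p$ (divisor $d=1$) and a $\sum_{\g\in\O(1)}=\sum_{\g\in\O^\times}$ piece with coefficient $\a(p)^2$ (divisor $d=p$, where $d/\sqrt{nm}=1$); from $n=p\neq q=m$ one gets a $\sum_{\g\in\O(pq)}$ piece with coefficient $\a(p)\a(q)/\sqrt{pq}$. Summing the $\O^\times$-coefficients over $p$ gives $\sum_p\a(p)^2=\|\a\|_{l^2}^2=1$, so this block equals exactly $\sum_{\g\in\O^\times}p_l(\langle\g.x,y\rangle)$. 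Because $l$ is even and $d(x,y)=W_l$, each $\g$ with $\g.x\in\{x,-x\}$ contributes $p_l(\cos W_l)$; for the remaining (at most $24$) units, Lemma \ref{move} with $n(\g)=1$ and $h(x)\le l^{3/2}$ keeps $\g.x$ at distance $\gg l^{-3/4}$ from both $x$ and $-x$, hence from $\{y,-y\}$, so by \eqref{eq:plsize} each contributes $O(l^{-1/8})$. Thus this block is $N(x,1,0)\,p_l(\cos W_l)+O(l^{-1/8})$, which by \eqref{eq:value} and $N(x,1,0)\ge 2$ (both of $\pm1\in\O^\times$ fix $x$) is negative and bounded away from $0$ by an absolute constant.

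Next I would estimate the two remaining blocks using the kernel bound. Applying Lemma \ref{sumestimate} with $m=p^2$, respectively $m=pq$ --- its hypotheses $l\ge C_1 m$ and $l\ge C_2\sqrt{m\,h(x)}$ hold once $l$ is large in terms of $\e$, using $p,q\le l^{1/4-2\e}$ and $h(x)\le l^{3/2}$ --- replaces each inner sum by $p_l(\cos W_l)N(x,m,0)$ plus an error. In the $p^2$-block the leading part $p_l(\cos W_l)\sum_p\frac{\a(p)^2}{p}N(x,p^2,0)$ is $\le 0$ (it only reinforces the sign) and is $\ll_\e l^{-(1/4-2\e)/K}$ in absolute value by Lemma \ref{stabilizer} and $\|\a\|_{l^2}=1$; a Cauchy--Schwarz estimate over $\mathrm{supp}(\a)$, together with $h(x)^{1/4}\le l^{3/8}$ and $h(x)^\e\le l^{3\e/2}$, shows the associated error contribution is $\ll_\e l^{-\d(\e)}$ for some $\d(\e)>0$. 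In the $pq$-block $|N(x,pq,0)|\le N_0$ uniformly by Lemma \ref{stabilizer}, so its leading part has absolute value at most $|p_l(\cos W_l)|\,N_0\big(\sum_{p\in\mathrm{supp}(\a)}|\a(p)|p^{-1/2}\big)^2\le |p_l(\cos W_l)|\,N_0\sum_{p\in\mathrm{supp}(\a)}p^{-1}=|p_l(\cos W_l)|\,N_0(\log K+o(1))$, using Cauchy--Schwarz for the first step and Mertens' theorem for the last (the window $\mathrm{supp}(\a)\subseteq[l^{(1/4-2\e)/K},l^{1/4-2\e}]$ contributes $\log K+o(1)$ to $\sum_p 1/p$); and the $pq$-error contribution is again $\ll_\e l^{-\d(\e)}$ by Cauchy--Schwarz, the weight $(pq)^{-1/2}$ making the sums $\sum_p|\a(p)|p^{\sigma}$ converge favourably.

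Collecting everything, up to the positive constant the left hand side of \eqref{eq:spectral} equals $N(x,1,0)\,p_l(\cos W_l)+O\!\big(|p_l(\cos W_l)|\,N_0(\log K+o(1))\big)+o(1)$. Since $p_l(\cos W_l)<0$ and $N(x,1,0)\ge 2$, fixing $K=e^{1/N_0}$ (an absolute constant $>1$, as $N_0$ is absolute) makes the total $\le -(N(x,1,0)-N_0\log K)|p_l(\cos W_l)|+o(1)\le -|p_l(\cos W_l)|+o(1)$, which is $<0$ once $l>L(\e)$. The main obstacle is the off-diagonal $pq$-block: its leading term is only bounded by a constant, not by $o(1)$, so one must calibrate the length of the support window so that $N_0\log K$ stays strictly below the absolute lower bound $N(x,1,0)\ge 2$ --- this is exactly what the uniformity in Lemma \ref{stabilizer} (a bounded stabilizer for every height and every product of two primes) permits, and it forces $K$ to be absolute but close to $1$. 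A secondary point, routine but essential, is verifying that every Lemma \ref{sumestimate} error term and the non-fixing-unit term tends to $0$; this is where the precise exponents $1/4-2\e$ and $l^{3/2}$ are consumed.
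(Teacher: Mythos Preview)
Your proposal is correct and follows essentially the same route as the paper: isolate the $\O^\times$-contribution via the $d=p$ divisor to get a main term $\approx N(x,1,0)p_l(\cos W_l)<0$, bound the remaining $\O(pq)$-sums using Lemma \ref{sumestimate} and Lemma \ref{stabilizer}, and use Mertens' theorem on $\sum_{p\in S}1/p=\log K+o(1)$ to calibrate $K$. The only cosmetic differences are that you separate the $p^2$-block from the $pq$-block (the paper lumps them as a single $\sum_{p,q\in S}$), you argue the unit sum directly via Lemma \ref{move} rather than citing Lemma \ref{sumestimate} at $m=1$, and you retain the factor $|p_l(\cos W_l)|$ in the off-diagonal bound (leading to $K=e^{1/N_0}$) whereas the paper uses $|p_l|\le 1$ and takes $K=e^{C/(2N_0)}$ with $C=-J_0(3\pi/2)$.
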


\begin{proof}
Let $C \equiv - J_0(3\pi/2) > 0 $ throughout this proof. We will take $\a : \mathbb{N} \to \R$ as in the Lemma and denote by $S$ the primes in $[l^{(1/4 -2\e)/K}, l^{1/4-2\e}]$ for $K$ to be chosen later. We split the right hand side of the spectral equation \eqref{eq:spectral} into summands with $d = 1$ (which appear for all pairs $p,q \in S$ and are the only contribution when $p \neq q$) and summands where $d = p  = q$. This gives
\begin{align}\label{eq:main}
 \frac{4\pi}{| \O^\times |(2l + 1)}  \sum_{\phi \in \B_l} A_\a(\phi)^2 \phi(x)\phi(y) &= \sum_{p \in S} \a(p)^2 \sum_{\g \in \O(1) = \O^\times} p_l (  \langle \g . x , y \rangle )   \\
&+ \sum_{p , q \in S} \a (p)\a (q)  \frac{1}{ \sqrt{pq} } \sum_{\g \in \O(pq)} p_l ( \langle \g . x , y \rangle ). \nonumber
\end{align}
The first term on the right hand side can be estimated by using Lemma \ref{sumestimate} with $m = 1$ (which is valid given the assumptions of the Lemma) and $h(x) \leq l^{3/2}$ giving
\begin{equation}\label{eq:unitsum}
\sum_{\g \in \O(1) = \O^\times} p_l (  \langle \g . x , y \rangle ) = p_l(\langle  x , y \rangle ) N(x,1,0) + O( l^{-1/8} ).
\end{equation}
As $\| \a \|_{l^2} = 1$ it follows from the above that
\begin{equation}
\sum_{p \in S} \a(p)^2 \sum_{\g \in \O(1) = \O^\times} p_l (  \langle \g . x , y \rangle )  = p_l(\langle  x , y \rangle ) N(x,1,0) + O( l^{-1/8} ) 
\end{equation}
Noting that $N(x,1,0) \geq 2$ (contributions coming from $\pm 1 \in \O^\times$) and using the asymptotic value from \eqref{eq:value} implies that for any $\delta > 0$ there is $L$ large enough so that when $l > L$
\begin{equation}
\sum_{p \in S} \a(p)^2 \sum_{\g \in \O(1) = \O^\times} p_l (  \langle \g . x , y \rangle ) \leq - 2 C + \delta .
\end{equation}
We choose $\d = C/2$ and $L(\e)$ large enough so that
\begin{equation}\label{eq:mainterm}
\sum_{p \in S} \a(p)^2 \sum_{\g \in \O(1) = \O^\times} p_l (  \langle \g . x , y \rangle ) \leq - 3C /2 
\end{equation}
which will be the known negative contribution from \eqref{eq:main}. The remainder will be treated as an error term. Indeed, considering the second term on the right hand side of \eqref{eq:main}, under the assumptions of the Lemma it is easy to check that the requisites of Lemma \ref{sumestimate} are satisfied when $l$ is large enough. Using Lemma \ref{sumestimate} together with the triangle inequality gives the following bound:
\begin{align}\label{eq:calc1}
&\left| \sum_{p , q \in S} \a (p)\a (q)  \frac{1}{ \sqrt{pq} } \sum_{\g \in \O(pq)} p_l ( \langle \g . x , y \rangle ) \right| \leq \\ 
& \sum_{p , q \in S} |\a (p)||\a (q)| \frac{1}{\sqrt{pq}} \left( |p_l( \langle x , y \rangle ) N(x,pq,0)| + O_\e \left( (pq)^\e l^{-1/2}(pq h^\e + (pq)^{1/4}h(x)^{1/4}) \right) \right) \nonumber.
\end{align}
Now we use the estimate $N(x,pq,0) \leq N_0$ from Lemma \ref{stabilizer} together with $|p_l(\langle x , y \rangle )| \leq 1$ to continue, writing $h = h(x)$,
\begin{align*}
&  \sum_{p , q \in S} |\a (p)||\a (q)| \frac{1}{\sqrt{pq}} \left( |p_l( \langle x , y \rangle ) N(x,pq,0)| + O_\e \left( (pq)^\e l^{-1/2}(pq h^\e + (pq)^{1/4}h^{1/4}) \right) \right) \\
&\leq \sum_{p , q \in S} |\a (p)||\a (q)| \left( N_0(pq)^{-1/2} + O_\e \left(  (pq)^{1/2 + \e} h^\e l^{-1/2} + (pq)^{-1/4 + \e}h^{1/4} l^{-1/2} \right)  \right) \\
&= N_0 \left( \sum_{p \in S} |\a(p)| p^{-1/2} \right)^2 + O_\e\left( l^{-1/2} h^\e \left( \sum_{p \in S} |\a(p)| p^{1/2 + \e} \right)^2 \: \right)\\
&+ O_\e \left(l^{-1/2} h^{1/4} \left( \sum_{p \in S} |\a(p)| p^{-1/4 + \e} \right)^2 \:\right) \\
&\leq  N_0 \left( \sum_{p \in S} \frac{1}{p} \right) + O_\e \left( l^{-1/2} h^\e \left( \sum_{p \in S}  p^{1 + 2\e} \right) + l^{-1/2} h^{1/4} \left( \sum_{p \in S}  p^{-1/2 + 2\e} \right) \right)
\end{align*}
where we used Cauchy-Schwarz and $\| \a \|_{l^2} = 1$ for the last inequality. We will now bound the sums in the last line one by one. Mertens' second Theorem gives
\begin{equation}
\sum_{p \in S} \frac{1}{p} = \sum_{l^{(1/4 - 2\e)/K} \leq p \leq l^{1/4 - 2\e}} \frac{1}{p} = \log K + o(1) , 
\end{equation}
and we choose $K = e^{C/(2N_0)} > 1$ so that $N_0 \log K = C / 2$. The other sums are estimated more coarsely by 
\begin{equation}
\sum_{p \in S}  p^{1 + 2\e} \leq \sum_{0 < n \leq  l^{1/4 -2\e} }n^{1 + 2\e} \ll_\e l^{(1/4 - 2\e)(2 + 2\e)} \ll l^{1/2 - 3\e}
\end{equation}
and 
\begin{equation}
\sum_{p \in S}  p^{-1/2 + 2\e} \leq \sum_{0 < n \leq  l^{1/4 -2\e} }n^{-1/2 + 2\e}  \ll_\e l^{(1/4 - 2\e)(1/2 + 2\e)} \leq l^{1/8 - \e/2}.
\end{equation}
Using that $h \leq l^{3/2}$ and incorporating our estimates gives for the error
\begin{equation}
  N_0 \left( \sum_{p \in S} \frac{1}{p} \right) + O_\e \left( l^{-1/2} h^\e\left( \sum_{p \in S}  p^{1 + 2\e} \right) + l^{-1/2} h^{1/4} \left( \sum_{p \in S}  p^{-1/2 + 2\e} \right) \right) = C/2 + o_\e(1)
 \end{equation}
so that in total (tracing the previous series of bounds back to \eqref{eq:calc1} and recalling \eqref{eq:main} and \eqref{eq:mainterm})
\begin{equation}
\frac{4\pi}{| \O^\times |(2l + 1)}  \sum_{\phi \in \B_l} A_\a(\phi)^2 \phi(x)\phi(y) \leq -C + o_\e(1)
\end{equation}
which concludes the proof.
\end{proof}

The next Proposition uses the previous estimates to find sign changes over the wavelength $W_l$ amongst the Hecke eigenfunctions.

\begin{prop}[Sign changes at algebraic points]\label{prop:signchange} As even $l$ varies for fixed $\e$
\begin{equation}
| \{ \phi \in \H_l^{\O^\times} : \phi \text{ a normalized Hecke eigenfunction}, \: \phi(x)\phi(y) < 0 \}| \gg_\e l^{1/4 - \e}
\end{equation}
uniformly through $x \in S^2$ with $h(x) \leq l^{3/2}$ and $y \in S^2$ with $d(x,y) = W_l$.
\end{prop}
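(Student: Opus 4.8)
The plan is to derive Proposition~\ref{prop:signchange} from Lemma~\ref{key} by a contradiction argument combined with the positivity of the diagonal terms in the twisted pre-trace formula. Fix $x, y \in S^2$ with $h(x) \le l^{3/2}$ and $d(x,y) = W_l$, and suppose for contradiction that the set
\begin{equation*}
\mathcal{S}^- = \{ \phi \in \B_l : \phi(x)\phi(y) < 0 \}
\end{equation*}
has size $o_\e(l^{1/4-\e})$ — more precisely, fewer than $\dim(\text{span of a resonator space})$ elements, where the resonator space is built from the primes in $[l^{(1/4-2\e)/K}, l^{1/4-2\e}]$. By the prime number theorem this prime set $S$ has cardinality $\asymp l^{1/4-2\e}/\log l \gg_\e l^{1/4 - 3\e}$, so after relabelling $\e$ it suffices to produce $\gg_\e |S|$ sign changes, or rather to show $|\mathcal{S}^-| \gg_\e |S|$.

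The key mechanism is linear algebra. Consider the map $\Phi \colon \ell^2(S) \to \R^{\mathcal{S}^-}$ sending $\a \mapsto (A_\a(\phi))_{\phi \in \mathcal{S}^-}$, where $A_\a(\phi) = \sum_{p \in S} \a(p)\eta_\phi(p)$ is the resonator coefficient from the excerpt. If $|\mathcal{S}^-| < |S|$, then $\Phi$ has nontrivial kernel, so there is a unit vector $\a \in \ell^2(S)$, supported on the primes in $[l^{(1/4-2\e)/K}, l^{1/4-2\e}]$, with $A_\a(\phi) = 0$ for every $\phi$ with $\phi(x)\phi(y) < 0$. For this choice of $\a$, every surviving term $A_\a(\phi)^2 \phi(x)\phi(y)$ on the left-hand side of the spectral equation \eqref{eq:spectral} is $\ge 0$, hence
\begin{equation*}
\sum_{\phi \in \B_l} A_\a(\phi)^2 \phi(x)\phi(y) \ge 0 .
\end{equation*}
But this $\a$ satisfies all three hypotheses of Lemma~\ref{key} (it is a unit vector supported on the required set of primes, $l$ is even and large, and $h(x) \le l^{3/2} \le l^{3/2}$), so Lemma~\ref{key} gives $\sum_{\phi \in \B_l} A_\a(\phi)^2 \phi(x)\phi(y) < 0$ — a contradiction. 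Therefore $|\mathcal{S}^-| \ge |S| \gg_\e l^{1/4 - \e}$ (absorbing the $\log l$ and the $\e$-loss from $3\e$ versus $\e$ into the implied constant and a redefinition of $\e$), which is exactly the claimed bound, since every $\phi \in \mathcal{S}^-$ is by definition a normalized Hecke eigenfunction in $\H_l^{\O^\times}$ with $\phi(x)\phi(y) < 0$.

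The only subtlety — and the step I would be most careful about — is the bookkeeping in the counting: one must make sure the number of available resonator parameters, namely $|S| = \pi(l^{1/4-2\e}) - \pi(l^{(1/4-2\e)/K})$, genuinely exceeds the hypothesized number of sign changes, so the dimension count forces a nonzero kernel vector. Since $K > 1$ is an \emph{absolute} constant (chosen in Lemma~\ref{key}) independent of $\e$ and $l$, Mertens/PNT gives $|S| \gg_\e l^{1/4 - 2\e}/\log l$, which dominates any bound of the shape $c_\e l^{1/4 - 3\e}$ once $l$ is large; so by choosing the exponent losses appropriately (e.g. proving the contrapositive: if there were $\le c_\e l^{1/4-\e}$ sign changes for a suitable small $c_\e$, we reach a contradiction) one gets the stated inequality with a uniform implied constant in $x$ and $y$, since Lemma~\ref{key} is itself uniform over all such $x,y$. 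No analytic input beyond Lemma~\ref{key} and the prime number theorem is needed here; the work has all been front-loaded into the kernel and Diophantine estimates.
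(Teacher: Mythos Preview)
Your proposal is correct and follows essentially the same approach as the paper: both arguments use the prime number theorem to lower-bound the dimension of the space of admissible $\a$, then use linear algebra (rank-nullity/orthogonality) to find a unit $\a$ annihilating every $A_\a(\phi)$ with $\phi(x)\phi(y)<0$, and finally invoke Lemma~\ref{key} to reach a contradiction with the nonnegativity of the remaining sum. The paper phrases the linear algebra step as ``in a vector space of dimension $N$ one can find a vector orthogonal to any $N-1$ vectors,'' which is exactly your kernel argument for the map $\Phi$.
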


\begin{proof} Fix $\e > 0$ and let $K, L(\e)$ be the constants from Lemma \ref{key} so that when $l > L(\e)$ the quadratic form in $\a$ 
\begin{equation}\label{eq:negative}
 \sum_{\phi \in \B_l}A_\a(\phi)^2 \phi(x)\phi(y)
\end{equation}
is negative when $x,y$ are as in the Proposition and $\a$ is supported on primes in $[l^{(1/4 -2\e)/K}, l^{1/4-2\e}]$. The dimension of the space of such $\a$ is $\gg_\d l^{1/4 - 2\e - \delta}$ after a calculation with the prime number theorem. In particular there is $c = c(\e)$ and $L = L(\e)$ such that when $l > L(\e)$
\begin{equation}
\dim \{ \a : \mathrm{supp}(\a) \subset [l^{(1/4 -2\e)/K}, l^{1/4-2\e}] \cap \{ \text{primes} \} \} \geq cl^{1/4 - 3\e}.
\end{equation}
If there are fewer than $cl^{1/4 - 3\e}$ of the $\phi \in \B_l$ such that $\phi_{l,i}(x) \phi_{l,i}(y) < 0$ then we can find $\a$ with support in $[l^{(1/4 -2\e)/K}, l^{1/4-2\e}]$ and such that
\begin{equation}
A_\a(\phi) = \sum_{n} \a(n) \eta_{\phi}(n) = 0 
\end{equation}
whenever $\phi(x) \phi(y) < 0$; this is just the observation that in a vector space with inner product of dimension $N$ one can find a vector orthogonal to any $N-1$ vectors. For this orthogonal $\a$ all the summands in \eqref{eq:negative} are nonnegative giving the required contradiction, so that at least $cl^{1/4 - 3\e}$ of the $\phi \in \B_l$ sign change between $x$ and $y$.
\end{proof}
\begin{rmk} Theorem \ref{vkinfinity}, which is the $L^\infty$ bound of VanderKam
\begin{equation}\label{eq:linfty}
\| \phi \|_{\infty} \ll_\e l^{5/12 + \e}, \quad \phi \in \B_l
\end{equation}
implies a weaker version of Proposition \ref{prop:signchange} with the lower bound $l^{1/4- \e}$ replaced by $l^{1/6-\e}$. Indeed, in the setting of Proposition \ref{prop:signchange} the pre-trace formula \eqref{eq:pretrace} together with the previously calculated \eqref{eq:unitsum} gives 
\begin{equation}\label{eq:unitsum2}
\frac{4\pi | \O^\times | }{2l+1} \sum_{\phi \in \B_l} \phi(x)\phi(y) =  p_l(\langle  x , y \rangle ) N(x,1,0) + O( l^{-1/8} ).
\end{equation}
Taking into account that $N(x,1,0) \geq 2$ and the value of $p_l(\langle  x , y \rangle )$ is asymptotically a negative constant, the bound \eqref{eq:linfty} for $\phi(x)$ and $\phi(y)$  implies that \eqref{eq:unitsum2} can hold for large $l$ only if at least $c(\e) l ^{1/6 - 2\e}$ of the $\phi \in \B_l$ give a negative contribution to the left hand side of \eqref{eq:unitsum2}, i.e. have different signs at $x$ and $y$.
\end{rmk}

\section{Proof of Theorem \ref{totalzeros} and Corollary \ref{zeroscoro}}
By use of Lemma \ref{lemma:euler}, Theorem \ref{totalzeros} will follow from the following Proposition \ref{totalzerosprop}. Corollary \ref{zeroscoro} follows directly from Theorem \ref{totalzeros} along with the dimension estimate for $\H_l^{\O^\times}$ given in \eqref{eq:dimension}.
\begin{prop}\label{totalzerosprop}
Let $l$ be even and $\B_l$ denote an orthonormal basis for $\H^{\O^\times}_l$ consisting of Hecke eigenfunctions. Fix one of the equators $E_j \equiv \{ (x_1,x_2,x_3) \in S^2 : x_j = 0 \}$, $j = 1,2,3$. Any $\varphi \in \B_l$ has finitely many zeros on $E_j$ and the members of $\B_l$ have a total number of zeros on $E_j$ which, for any $\e >0$, is bounded below by the formula
\begin{equation}
\sum_{\varphi \in \B_l} | \varphi^{-1}(\{0\}) \cap E_j | \gg_\e l^{5/4 - \e}.
\end{equation}
\end{prop}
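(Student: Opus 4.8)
The plan is to discretize each equator $E_j$ into roughly $2l/3$ arcs of length $W_l = \frac{3\pi}{2l+1}$ and apply Proposition \ref{prop:signchange} at each consecutive pair of endpoints. First I would fix, say, $j = 1$ (the argument is symmetric in $j$) and choose a family of points $z_0, z_1, \dots, z_M$ on $E_1$ with $d(z_k, z_{k+1}) = W_l$ and $M \asymp l$; crucially, I must choose these points so that each $z_k$ has bounded height, in fact $h(z_k) \leq l^{3/2}$, in order to feed them into Proposition \ref{prop:signchange}. This should be arranged by taking points of the form $(0, a, b)/\sqrt{a^2+b^2}$ with $a, b$ integers of size $\ll l$, for which $h = a^2 + b^2 \ll l^2 \leq l^{3/2}$ — wait, that is too big, so one actually wants $a, b \ll l^{3/4}$; but then consecutive such points are not exactly $W_l$ apart. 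The cleaner route, and the one I would pursue, is to observe that one only needs $d(z_k, z_{k+1}) = W_l$ for \emph{one} endpoint to have controlled height: re-examining Proposition \ref{prop:signchange}, it requires $h(x) \leq l^{3/2}$ only for $x$, and $y$ is arbitrary with $d(x,y) = W_l$. So I would place a sparse net of `good' points $x_1, \dots, x_T$ on $E_1$ with $h(x_t) \leq l^{3/2}$ and $T \asymp l$, and for each $x_t$ take $y_t$ with $d(x_t, y_t) = W_l$ also lying on $E_1$; distinct pairs $(x_t, y_t)$ can be chosen to have disjoint interiors so the resulting sign changes occur in disjoint sub-arcs.

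The core of the argument is then a double count. For each of the $\asymp l$ pairs $(x_t, y_t)$, Proposition \ref{prop:signchange} gives $\gg_\e l^{1/4 - \e}$ eigenfunctions $\varphi \in \B_l$ with $\varphi(x_t)\varphi(y_t) < 0$. Each such sign change forces a zero of $\varphi$ on the arc between $x_t$ and $y_t$, since $\varphi$ restricted to $E_1$ is continuous. Because the arcs are disjoint, summing over $t$ and over the eigenfunctions counted at each $t$ yields
\begin{equation}
\sum_{\varphi \in \B_l} |\varphi^{-1}(\{0\}) \cap E_1| \;\geq\; \sum_{t=1}^{T} |\{\varphi \in \B_l : \varphi(x_t)\varphi(y_t) < 0\}| \;\gg_\e\; l \cdot l^{1/4 - \e} \;=\; l^{5/4 - \e},
\end{equation}
after renaming $\e$. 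The finiteness of $\varphi^{-1}(\{0\}) \cap E_j$ is separate and easier: the restriction of $\varphi \in \H_l$ to a great circle is (after the standard parametrization) a trigonometric polynomial of degree $l$, hence either identically zero or with at most $2l$ zeros; the $\G$-symmetry discussion following Lemma \ref{lemma:euler} — specifically that $R_x, R_y, R_z \in \G$ and $l$ even — rules out the identically-zero case on each $E_j$, so $|\varphi^{-1}(\{0\}) \cap E_j| \leq 2l < \infty$.

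The main obstacle I anticipate is the height bookkeeping: one needs genuinely $\asymp l$ points on $E_1$ that are (a) pairwise $\geq W_l$ apart so the sign-change arcs are disjoint, and (b) of height $\leq l^{3/2}$. Points $(0,a,b)/\sqrt{a^2+b^2}$ with $a^2 + b^2 = h$ a fixed integer of size $\asymp l^{3/2}$ lie on $E_1$ with height dividing $h$, and there are $\asymp h^{\e}$ of them for generic $h$ — not enough. Instead I would let $h$ range over integers up to $l^{3/2}$ and collect all primitive representations; the number of lattice points $(a,b)$ with $a^2 + b^2 \leq l^{3/2}$ is $\asymp l^{3/2}$, and projecting to $E_1$ gives $\asymp l^{3/2}$ distinct points of height $\leq l^{3/2}$, vastly more than the $\asymp l$ needed, so one can then greedily extract a subfamily that is $W_l$-separated. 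One must check this greedy extraction retains $\gg l$ points: the $l^{3/2}$ projected points are reasonably spread on the circle $E_1$ (no short arc contains too many, since an arc of length $W_l \asymp 1/l$ corresponds to a thin sector which contains $\ll l^{3/2}/l \cdot (\text{something})$ lattice points — here a clean bound using $d(x,y) \geq 1/\sqrt{h(x)h(y)}$ for distinct height-bounded points, which is exactly Lemma \ref{move} with $\g = 1$, forces any two of them to be $\geq l^{-3/2}$ apart, so an arc of length $W_l$ contains $\ll W_l l^{3/2} = l^{1/2}$ of them), so a $W_l$-separated subfamily of size $\gg l^{3/2}/l^{1/2} = l$ survives. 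This is the one place real care is required; everything else is a direct assembly of Proposition \ref{prop:signchange}, Lemma \ref{lemma:euler}, and the elementary degree bound.
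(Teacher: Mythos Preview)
Your proposal is correct and follows essentially the same strategy as the paper: place $\asymp l$ height-bounded points on the equator, apply Proposition \ref{prop:signchange} at each to obtain $\gg_\e l^{1/4-\e}$ sign changes in the adjacent arc of length $W_l$, and sum over disjoint arcs. The paper, however, sidesteps the greedy-extraction step you worried about by a sharper choice of points: it takes
\[
S_l = \left\{ \frac{(a,b,0)}{\sqrt{a^2+b^2}} : (a,b)=1,\ 0 < a,b < \sqrt{\kappa l/2} \right\},
\]
so that every $x \in S_l$ has height $h(x) < \kappa l$ (far inside the $l^{3/2}$ allowance of Proposition \ref{prop:signchange}). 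The same Diophantine argument you invoke then gives $d(x,y) \gg 1/(\kappa l)$ for distinct $x,y \in S_l$, which already exceeds $3W_l$ once $\kappa$ is chosen small enough, forcing the arcs $[x, z(x)]$ to be pairwise disjoint with no further thinning; meanwhile $|S_l| \asymp l$ is just the count of coprime pairs in a box. Your route through height $\leq l^{3/2}$ and greedy selection also works, but the paper's tighter height bound is precisely the clean fix for the bookkeeping obstacle you identified.
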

To prove Proposition \ref{totalzerosprop} we will convert Proposition \ref{prop:signchange} into an estimate for the total number of zeros of Hecke eigenfunctions on equators $E_j \equiv \{ (x_1 , x_2, x_3) \in S^2 : x_j = 0 \}$. This is done by finding sufficiently many disjoint intervals in the equator where sign changes are provided by Proposition \ref{prop:signchange}. We will prove Proposition \ref{totalzerosprop} in the case when the equator is $E_3$. Recall that $l$ is even and $\B_l$ denotes an orthonormal basis of Hecke eigenfunctions for $\H_l^{\O^\times}$.

\begin{proof}[Proof of Proposition \ref{totalzerosprop}]
Let $\k > 0$ be an absolute constant (to be chosen small enough) and consider the set of points
\begin{equation}
S_l \equiv \left\{ \frac{(a,b,0)}{\sqrt{a^2 + b^2}} : a, b \in \Z , \: (a,b) = 1 , \: 0 < a, b < \sqrt{\k l / 2}  \right\} \subset E_3 .
\end{equation}
For each $x \in S_l$ choose $z(x)$ to be one of the points of distance $W_l$ from $x$. For any $x \in  S_l$ it is clear that $h(x) \leq a^2 + b^2 < \k l$. In particular $h(x)$ is eventually $\leq l^{3/2}$ so Proposition \ref{prop:signchange} gives that $\gg_\e l^{1/4 - \e}$ of the elements of $\B_l$ sign change between $x$ and $z(x)$.

We can ensure the intervals $[x , z(x)]$ and $[y, z(y)]$ are disjoint for $x \neq y \in S_l$ as follows. If $x = \frac{(a, b , 0 )}{\sqrt{a^2 + b ^2}}$ and $y = \frac{(c, d , 0 )}{\sqrt{c^2 + d ^2}}$ are distinct points in $S_l$ then 
 $\langle x, y \rangle \neq 1$ and so
\begin{equation}
\langle x , y \rangle^2 \leq 1 - \frac{1}{(a^2 + b^2)(c^2 + d^2)} 
\end{equation}
giving
\begin{equation} 
d(x,y) \gg \frac{1}{\sqrt {a^2 + b^2}\sqrt{c^2+d^2}} > \frac{1}{\k l} .
\end{equation}
We now choose $\k$ small enough so that
\begin{equation}
d(x, y) \geq 3 W_l
\end{equation}
and the intervals $[x , z(x)]$ and $[y, z(y)]$ are disjoint for any distinct $x , y \in S_l$.
We have the estimate for the size of $S_l$
\begin{equation}
| S_l | = \frac{6}{\pi^2}  \frac{ \k l }{2} + o(l) \gg l .
\end{equation}
Applying the intermediate value theorem to each of the intervals $[x , z(x)]$ we count per interval $\gg_\e l^{1/4 - \e}$ zeros in $[x, z(x)]$ of members of $\B_l$. There are $\gg l$ such disjoint intervals so in total we count $\gg_\e l^{5/4 - \e}$ zeros of members of $\B_l$ on $E_3$. 
\end{proof}
This concludes the proof of our `on average' result.
\section{Adelization}\label{adelization}
Let $\AA$ denote the $\Q$-algebra of adeles, $\AA_{f}$ the finite adeles. Write $B$ for the Hamilton quaternions defined over $\Q$ so that for any $\Q$-algebra $R$ we define 
\begin{equation}
B(R)\equiv B(\Q) \otimes_\Q R .
\end{equation}
We view $B^\times$ as a algebraic group over $\Q$, and will consider in particular the adele group $B^\times(\AA)$. Let $\O$ denote the maximal order in $B(\Q)$, which has class number $h(\O) = 1$. By $\widehat{\O}$ we mean the closure of $\O$ in $B( \AA_{f} )$. There is an isomorphism
\begin{equation}\label{eq:adelicquaternion}
B^\times(\Q) \backslash B^\times(\AA)/ \widehat{\O}^\times \cong \O^\times \backslash B^\times(\R).
\end{equation}
The inverse of this map can be given explicitly by
\begin{align*}
&L : \O^\times \backslash B^\times(\R) \to B^\times(\Q) \backslash B^\times(\AA_{\Q}) / \widehat{\O}^\times , \\
&L : \O^\times x_\infty \mapsto B^\times(\Q) (1,1,1,\ldots , x_\infty) \widehat{\O}^\times .
\end{align*}

We choose $\mathbf{k} \in S^2$ as our north pole. The group $B^\times(\R)$ acts on $S^2$ by 
\begin{equation}
g.(x \mathbf{i} + y \mathbf{j} + z \mathbf{k}) = g (x \mathbf{i} + y \mathbf{j} + z \mathbf{k}) \bar{g} / n(g) , \quad x^2 +y^2 +z^2 = 1.
\end{equation}
Let $K_\infty = \mathrm{Stab}_{ B^\times(\R)}(\mathbf{k}) \cap n^{-1}(1)$ consist of the norm 1 elements in  $B^\times(\R)$ which fix $\mathbf{k}$. We can lift Laplacian eigenfunctions $\vp$ from $\O^\times \backslash S^2$ to Casimir eigenfunctions on $\O^\times \backslash B^\times(\R)$ by
\begin{equation}
\hat{\vp}(g) = \vp(g.\mathbf{k}) .
\end{equation}
If $\vp$ is an eigenfunction for all the Hecke operators on $L^2(S^2)$ then $L_* \hat{\vp}$, as a function on $ B^\times(\Q) \backslash B^\times(\AA)$, generates an irreducible cuspidal automorphic representation of $B^\times(\AA)$ which has trivial central character. The representation associated to $\vp$ in this way is denoted $\rho = \rho_\vp$ and $L_* \hat{\vp}$ is the unique $\widehat{\O}^\times K_\infty$ invariant vector in $\rho$. We have the restricted tensor product decomposition for $\rho$ over places $\nu$ of $\Q$
\begin{equation}
\rho \cong \bigotimes_{\nu} \rho_\nu .
\end{equation}
The local representation $\rho_\nu$ is ramified only at $2$ and $\infty$. We know that $\rho_2$ is the trivial representation or a character, and $\rho_\infty$ is the $l$th irreducible representation of $B^\times(\R) / \R \cong SO(3)$.

Now let $E = \Q(\mathbf{i})$, $\O_E = \Z[\mathbf{i}]$ the maximal order of $E$, and consider the embedding of normed $\Q$-algebras
\begin{equation}
\i : \Q(\mathbf{i}) \to B(\Q) , \quad \mathbf{i} \mapsto \mathbf{i}.
\end{equation}
We have then the optimality condition
\begin{equation}\label{eq:optimality}
\i(\O_E) = \i(E) \cap \O .
\end{equation}
Analogously to before there is an adelic isomorphism
\begin{equation}\label{eq:adelicabelian}
E^\times \backslash \AA_E^\times / \widehat{\O_E}^\times \cong \O_E^\times \backslash \C^\times 
\end{equation}
which follows from $E$ having class number $h(E) = 1$. The inclusion
\begin{equation}
E^\times \hookrightarrow B^\times(\Q)
\end{equation}
induces an embedding
\begin{equation}
E^\times \backslash \AA_E^\times \to B^\times(\Q) \backslash B^\times(\AA)
\end{equation}
and because of the optimality condition \eqref{eq:optimality} an embedding
\begin{equation}
E^\times \backslash  \AA_E^\times / \widehat{\O_E}^\times \to B^\times(\Q) \backslash  B^\times(\AA) / \widehat{\O}^\times .
\end{equation}
By using our previous adelic isomorphisms this gives an embedding
\begin{equation}
\O_E^\times \backslash \C^\times \cong  E^\times \backslash \AA_E^\times / \widehat{\O_E}^\times \to  B^\times (\Q)\backslash  B^\times (\AA)/ \widehat{\O}^\times  \cong \O^\times \backslash B^\times(\R) ,
\end{equation}
and this map agrees exactly with
\begin{equation}
\O_E^ \times \backslash \C^\times  \to \O^\times \backslash B^\times(\R) 
\end{equation}
induced directly from optimality \eqref{eq:optimality} and the inclusion $E \otimes \R \to B(\R)$ at the archimedean place, without any adelization. 

We have discussed lifting in the quaternion setting, now we discuss lifting in the abelian case. It will be sufficient for us to consider functions on $\C$
\begin{equation}
\omega_n(z)  = (z / \bar{z})^n , \quad n \in \frac{1}{2}\Z.
\end{equation}
In the special case when $n$ is an even integer, $\omega_n$ is invariant under $\O_E^\times$ and can be viewed as a character of $\O_E^\times \backslash \C$. It can therefore be lifted to a character $\Omega_n$ of the idele class group $E^\times \backslash \AA_E^\times $ which is trivial on $\widehat{\O_E}^\times$ via the isomorphism \eqref{eq:adelicabelian}. As $\Q$ has class number $h(\Q) = 1$ it follows that the restriction of $\Omega_n$ to $\AA^\times$ is trivial. It is clear that the Hecke character $\Omega_n$ constructed in this way is unramified at all the finite places of $E$ and of infinite order when $n \neq 0$.

\section{Period formulae}\label{periodsection}
 Now we recall important work of Waldspurger \cite{WALD}, Jacquet and Chen \cite{JC} and Martin and Whitehouse \cite{MW} which will allow us to calculate the Fourier coefficients of automorphic forms in terms of central values of Rankin-Selberg $L$-functions. Our discussion and notation follow \cite{JC} and \cite{MW}.
 
Let $E = \Q(\mathbf{i})$ as before and $\pi$ denote a cuspidal automorphic representation of $\GL_2(\AA)$ which has trivial central character. Let $\Omega$ be a unitary Hecke character of $\AA_E^\times / E^\times$  whose restriction to $\AA^\times$ is trivial. For our application we need to assume that $\Omega$ and $\pi$ have disjoint ramification. 
 
We will deal with an irreducible unitary automorphic representation $\pi^B$ of $B^\times(\AA)$ associated to $\pi$ by the Jacquet-Langlands correspondence. In our case, $\pi^B = \rho_\vp$ for $\vp$ an automorphic form on $\O^\times \backslash S^2$, and $\Omega = \Omega_n$. As $\Omega_n$ is ramified nowhere the ramification of $\Omega_n$ and $\pi$ is trivially disjoint. The central characters of $\Omega_n$ and $\rho_\vp$ are both trivial.
 
 The object of study in this section is the period integral
 \begin{equation}
 P^B(\phi) \equiv \int_{E^\times \AA^\times \backslash \AA_E^\times} \phi(t) \Omega^{-1}(t) dt
 \end{equation}
 defined for all $\phi \in \pi^B$ via the identification of $E^\times \backslash \AA_E^\times$ with a subset of $B^\times(\Q) \backslash B^\times(\AA)$ and the matching of central characters. Let us motivate the study of $P^B$. Take $\phi$ to be the spherical vector in $\rho_\vp$, which corresonds to the lift of the spherical harmonic $\vp$. Up to choice of normalization for Haar measures, by using the adelic isomorphisms of Section \ref{adelization} we have, modulo positive constants,
 \begin{align}
 P^B(\phi) &= \int_{\theta \in [0,2 \pi)} \vp( (\cos(\theta) + \sin(\theta) \mathbf{i}) . \mathbf{k} ) e^{-i 2 n \theta} d \theta \\
 &= \int_{\theta \in [0,2 \pi)} \vp( e^{\mathbf{i} 2\theta} \mathbf{k} ) e^{-i 2 n \theta} d \theta =\int_{\theta \in [0,2 \pi)} \vp( e^{\mathbf{i} \theta} \mathbf{k} ) e^{-i n \theta} d \theta
 \end{align}
 so that $P^B(\phi)$ is a multiple (not depending on $\vp$) of the $n$th Fourier coefficient of the classical automorphic form $\vp$ restricted to the geodesic $\{  e^{\mathbf{i} \theta} \mathbf{k} \}$.
 
Now write $\Pi = BC_{E/\Q}(\pi)$ for the base change of $\pi$ to an automorphic representation of $\GL_2(\AA_E)$, which is cuspidal unless $\pi$ is dihedral.  In \cite{WALD}, Waldspurger proved the existence of a formula relating $|P^B(\phi)|^2$ to the central value $L(1/2, \Pi \otimes \Omega)$. Certain factors in this formula are not as explicit as we would like for our application, although they could undoubtedly be calculated. A formula of the same flavor, which does have explicit factors, was proven by Jacquet and Chen \cite{JC} in the case when $\pi$ is not dihedral, and by Martin and Whitehouse \cite{MW} in the dihedral case. The work of Jacquet and Chen utilizes the relative trace formula and the factorization of a certain distribution which we will describe now. We note that in all the formulae we are aware of, only the square of the period integral is obtained. It would be nice to have the phase, but we do not know how to get it at present.
 
The distribution we need to consider is
\begin{equation}
J_{\pi^B} : C^\infty_c( B^\times(\AA) ) \to \C , \quad f \mapsto \sum_{\phi} \int [\pi^B(f)\phi](t)\Omega(t)^{-1} dt \overline { \int \phi(t)\Omega(t)^{-1} dt },
\end{equation}
where summation is over an orthonormal basis of $\pi^B$ and
\begin{equation}
 \pi^B(f)\phi = \int_{B^\times(\AA)} f(g) [\pi^B(g) \phi] dg .
\end{equation} 
Let $S_0$ be a finite set of places of $\Q$ containing the infinite place and all places where $\pi^B$ or $\Omega$ is ramified, which for us will be $S_0 = \{2,\infty\}$. Denote by $S$ all the places of $E$ lying over $S_0$. For any $f = \left( \prod_{\nu \in S_0} f_\nu \right) f^{S_0}$, where $f^{S_0}$ is the characteristic function of a maximal compact subgroup of $B^\times(\AA^{S_0})$, the distribution $J_{\pi^B}$ factorizes at $f$. 
We will take $f$ such that $\pi^B(f)$ is orthogonal projection onto the span of the $L^2(B^\times(\AA))$-normalized spherical vector $\varphi$ in $\pi^B$. By abuse of notation $\vp$ is the lift of a Hecke eigenfunction $\vp$ on $S^2$. Then
\begin{equation}\label{eq:expression1}
J_{\pi^B}(f) = |P^B(\varphi)|^2.
\end{equation}
On the other hand we have the factorization (\cite[Theorem 2]{JC}, \cite[Appendix]{MW})
\begin{equation}\label{eq:expression2}
J_{\pi^B}(f) = \frac{1}{2}\prod_{\nu \in S_0} \tilde{J}_{\pi^B}(f_\nu) \times \left( \prod_{\substack{\nu \in S_0 ,\\ \mathrm{inert}}} \e(1,\eta_\nu, \psi_\nu) 2L(0,\eta_\nu) \right) \times \frac{ L_{S_0}(1,\eta)L^S(1/2 , \Pi \otimes \Omega ) }{ L^{S_0}(1,\pi,\mathrm{Ad} )} ,
\end{equation}
where $\eta$ is the quadratic Hecke character of $\AA^\times$ associated to $E$.
The terms in the central parenthesis have not been defined (see \cite{JC}), but they are positive and depend only on the ramification of the representations and $E$. The same is true for $L_{S_0}(1,\eta)$ (see our notation index for conventions regarding $L$-functions). The terms $ \tilde{J}_{\pi^B}(f_\nu)$ are local factors of the distribution which need to be calculated per ramification scenario as in \cite{JC}. With these observations and the two expressions \eqref{eq:expression1} and \eqref{eq:expression2} for $J_{\pi^B}(f)$ we can write
\begin{equation}\label{eq:prefourier}
|P^B(\varphi)|^2 = C_{\mathrm{ram}(\Omega),\mathrm{ram}(\pi), E}\prod_{\nu \in S_0} \tilde{J}_{\pi^B}(f_\nu) \times \frac{ L^S(1/2 , \Pi \otimes \Omega ) }{ L^{S_0}(1,\pi,\mathrm{Ad} )} .
\end{equation}
When we apply this equation we will want it to be in terms of the finite $L$-functions $L(1/2 , \Pi \otimes \Omega ) $ and $ L(1,\pi,\mathrm{Ad} )$, and we will want to know the local factors $ \tilde{J}_{\pi^B}(f_\nu) $ modulo positive values which are bounded above and away from zero. In the next Lemma we obtain the desired relation.
\begin{lemma}\label{fourier1} Let $\vp \in \H_l^{\O^\times}$ be an eigenfunction of the Hecke operators and the spherical Laplacian with $\|\vp\|_{L^2(S^2)} =1 $. With the previous notation, let $\pi$ be the Jacquet-Langlands transfer of $\rho = \rho_\vp$ to an automorphic representation of $\GL_2(\AA)$ and $\Pi = BC_{E /\Q}(\pi)$. Then we have
\begin{equation}
\left| \int_{\theta \in [0,2 \pi)} \vp( e^{\mathbf{i} \theta} \mathbf{k} ) e^{-i n \theta} d \theta \right|^2 \asymp \frac{ Y^n_l(0)^2 }{2l+1}\frac{ L(1/2 , \Pi \otimes \Omega_n ) }{ L(1,\pi,\mathrm{Ad} )} ,
\end{equation}
where $Y^n_l$ is the associated Legendre polynomial $P^n_l$ normalized so as to have \\$\| Y^n_l( \langle \bullet, \mathbf{k} \rangle ) \|_{L^2(S^2)} = 1$.
\end{lemma}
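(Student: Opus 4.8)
The plan is to deduce the Lemma directly from the period identity \eqref{eq:prefourier}, reducing everything to the one factor on its right-hand side that genuinely depends on $l$: the archimedean local period $\tilde J_{\pi^B}(f_\infty)$.

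First I would record that the left-hand side of the Lemma equals $|P^B(\varphi)|^2$ up to an absolute positive constant. Indeed, the computation carried out in Section~\ref{periodsection} already exhibits $P^B(\varphi)$, for the spherical vector, as a fixed multiple of $\int_0^{2\pi}\vp(e^{\mathbf{i}\theta}\mathbf{k})e^{-in\theta}\,d\theta$; the only point to check is that the constant relating the $L^2(S^2)$-normalization of $\vp$ to the $L^2(B^\times(\AA))$-normalization of its lift is a ratio of fixed adelic volumes — this is where the class number one hypothesis, via \eqref{eq:adelicquaternion}, enters — hence absolute and independent of $l$. Given this, \eqref{eq:prefourier} expresses $|P^B(\varphi)|^2$ as $C_{\mathrm{ram}(\Omega),\mathrm{ram}(\pi),E}$ times $\tilde J_{\pi^B}(f_2)\,\tilde J_{\pi^B}(f_\infty)$ times $L^S(1/2,\Pi\otimes\Omega_n)/L^{S_0}(1,\pi,\mathrm{Ad})$. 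The constant $C$ depends only on $E=\Q(\mathbf{i})$ and the ramification of $\Omega_n$ and $\pi$ — all fixed, since the only ramification is at $2$ and $\rho_2$, hence $\pi_2$, is a fixed representation not varying with $l$ — so it is absolute; likewise $\tilde J_{\pi^B}(f_2)$ involves only the fixed local data $\pi_2,\Omega_{n,2}$ and lies between absolute constants; and passing from the partial $L$-functions in \eqref{eq:prefourier} to the finite $L$-functions in the Lemma only reinstates the Euler factors over $2$, again fixed. Thus the Lemma is equivalent to the archimedean estimate $\tilde J_{\pi^B}(f_\infty)\asymp Y_l^n(0)^2/(2l+1)$.

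To prove that, I would compute the archimedean local period by hand, exploiting that $\rho_\infty$ is the explicit $(2l+1)$-dimensional representation of $B^\times(\R)/\R^\times\cong\mathrm{SO}(3)$. Since $f_\infty$ is chosen so that $\pi^B_\infty(f_\infty)$ is the orthogonal projection onto the $K_\infty$-fixed line, spanned by the unit zonal vector $v_{\mathbf{k}}$ at the north pole $\mathbf{k}$, the local distribution is, up to an absolute constant, the integral over $E_\infty^\times/\R_\infty^\times\cong\mathrm{SO}(2)$ — the rotations $r_\phi$ about the $\mathbf{i}$-axis — of the matrix coefficient $\phi\mapsto\langle\rho_\infty(r_\phi)v_{\mathbf{k}},v_{\mathbf{k}}\rangle$ against $\Omega_{n,\infty}^{-1}$. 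By the reproducing property of $\tfrac{2l+1}{4\pi}p_l(\langle\,\cdot\,,\cdot\,\rangle)$ on $\H_l$ this matrix coefficient is exactly $p_l(\cos\phi)$; expanding $v_{\mathbf{k}}=\sum_m c_m\,\xi_m$ in an orthonormal basis $\{\xi_m\}$ of spherical harmonics with $\mathbf{i}$ as pole, on which $r_\phi$ acts by $e^{\pm im\phi}$, gives $p_l(\cos\phi)=\sum_m|c_m|^2e^{\pm im\phi}$, so the integral is an absolute constant times $|c_n|^2$. Finally, the addition theorem gives $c_m=\sqrt{4\pi/(2l+1)}\,\overline{\xi_m(\mathbf{k})}$, and since $\mathbf{k}$ lies on the equator of the $\mathbf{i}$-adapted coordinate system $|\xi_m(\mathbf{k})|=|Y_l^m(0)|$; hence $|c_n|^2=\tfrac{4\pi}{2l+1}Y_l^n(0)^2$ and $\tilde J_{\pi^B}(f_\infty)\asymp Y_l^n(0)^2/(2l+1)$, as required.

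The main obstacle — really the only delicate point — is the precise normalization of the archimedean distribution $\tilde J_{\pi^B}(f_\nu)$ in the relative trace formula formalism of Jacquet–Chen \cite{JC} and Martin–Whitehouse \cite{MW}: one must pin down how the local $L$-factor at $\infty$ and the choice of invariant measures enter $\tilde J_{\pi^B}(f_\infty)$ and check they contribute only an absolute multiplicative constant, not an $l$-dependent one. If $\tilde J_{\pi^B}(f_\infty)$ retains archimedean $\Gamma$-factors, these must be weighed against $L_\infty(1/2,\Pi_\infty\otimes\Omega_{n,\infty})/L_\infty(1,\pi_\infty,\mathrm{Ad})$ and shown comparable up to absolute constants by Stirling's formula; this is routine bookkeeping but has to be carried out to obtain an honest $\asymp$. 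The dihedral versus non-dihedral dichotomy needs no separate treatment here, as it is already absorbed into \eqref{eq:prefourier} through the respective appeals to \cite{MW} and \cite{JC}.
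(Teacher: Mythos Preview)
Your overall strategy is the paper's: start from \eqref{eq:prefourier}, argue that every factor except the archimedean local period is $\asymp 1$, then compute $\tilde J_{\rho}(f_\infty)\asymp Y_l^n(0)^2/(2l+1)$. Two points of execution differ from the paper and are worth noting.

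At $\infty$ the paper avoids the normalization worry you raise by invoking Jacquet--Chen's explicit recipe $\tilde J_{\rho}(f_\infty)=\langle \rho_\infty(f_\infty)\,e_T,\,e'_T\rangle$, with $e_T,e'_T$ the unit $\Omega_n^{\mp 1}$--eigenvectors for the $\mathbf{i}$--torus; since $\rho_\infty(f_\infty)$ is orthogonal projection onto the $\mathbf{k}$--zonal vector, two applications of the reproducing property give exactly $Y_l^n(0)^2/(2l+1)$, with no Stirling or $\Gamma$--factor bookkeeping needed (the partial $L$--functions in \eqref{eq:prefourier} already exclude $\infty$). Your matrix--coefficient computation via the addition theorem is a correct alternative and lands on the same value, but you would still have to match it to the specific Jacquet--Chen normalization to turn ``$\asymp$ up to the right constant'' into an honest proof.

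At $2$ your claim that $\pi_2$ is a fixed representation is not quite right: $\pi$ has conductor $2$ and trivial central character, so $\pi_2$ is a twist of Steinberg, and the twist may vary with $\varphi$. The paper therefore does not argue by ``fixed local data'' but computes everything explicitly: $\tilde J_{\rho}(f_2)=1$ by citing \cite{MW}, and the missing Euler factors $L_{(1+i)}(1/2,\Pi\otimes\Omega_n)$ and $L_2(1,\pi,\mathrm{Ad})$ via their Weil--Deligne parameters, obtaining $(1-\chi(\Phi)\alpha_n(\Phi)/2)^{-1}\asymp 1$ and $4/3$ respectively. Your conclusion survives because only finitely many local types occur, but the paper's explicit route is what actually closes the argument.
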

\begin{proof}
Given that $\| \vp \|_{L^2(S^2)} = 1$,
\begin{equation} \label{eq:start}
 \left| \int_{\theta \in [0,2 \pi)} \vp( e^{\mathbf{i} \theta} \mathbf{k} ) e^{-i n \theta} d \theta \right|^2 \asymp \prod_{\nu \in S_0} \tilde{J}_{\rho}(f_\nu) \times \frac{ L^S(1/2 , \Pi \otimes \Omega_n ) }{ L^{S_0}(1,\pi,\mathrm{Ad} )} 
 \end{equation}
 follows from the previous discussion, particularly equation \eqref{eq:prefourier}. Here $S_0 = \{2 , \infty \}$ so it remains to calculate $\tilde{J}_{\rho}(f_\nu)$ at these places as well as the Euler factor at $2$ for $L(1,\pi,\mathrm{Ad} )$ and the factor at $(1+i)$ for $ L(1/2 , \Pi \otimes \Omega_n )$. Our $f$ agrees with that of Martin and Whitehouse \cite{MW} at $2$ and they calculate that in our case ($\pi$ ramified at $2$ and $B$ non-split),  
 \begin{equation}
 \tilde{J}_{\rho}(f_2) = 1. 
 \end{equation}
 For the factor $ \tilde{J}_{\rho}(f_\infty)$ we follow \cite[Section 5.1]{JC}. We pick as a model for $\rho_\infty$ the space $\H_l$ of homogeneous harmonic polynomials of degree $l$ on $\R^3$ restricted to the sphere. We need to find unit vectors $e_T$, $e'_T$ such that
 \begin{equation}
 \rho_\infty(t) e_T = \Omega_n^{-1}(t) e_T ,\quad  \rho_\infty(t) e'_{T} = \Omega_n(t) e'_{T}
 \end{equation}
 for $t$ in $\R^{\times} \backslash \C^\times$.  In our model these vectors are given by
 \begin{equation}
 e_T(v) = Y^n_l( \langle v, \mathbf{i} \rangle ) e^{- i n \theta} , \quad e'_{T}(v) = Y^n_l( \langle v, \mathbf{i} \rangle ) e^{i n \theta},
 \end{equation}
 where $\theta$ is an angle around the axis of $\mathbf{i}$ such that $\theta(\mathbf{k}) = 0$. We have then
 \begin{equation}
 \tilde{J}_{\rho}(f_\infty) =  \langle \rho_\infty(f_\infty) e_T , e_{T'} \rangle .
 \end{equation}
 We chose $f_\infty$ so that $\rho_\infty(f_\infty)$ was orthogonal projection onto the $\mathbf{k}$-spherical vector. We can therefore write
 \begin{equation}
  [\rho_\infty(f_\infty) e_T](y) = \left( \int_{x \in S^2} Y^0_l(\langle \mathbf{k} , x \rangle) Y^n_l( \langle x, \mathbf{i} \rangle )e^{-in\theta} dx \right)    Y^0_l(\langle \mathbf{k} , y \rangle) .
  \end{equation}
 Using that the kernel $\sqrt{2l+1}   Y^0_l(\langle \bullet , \bullet \rangle)$ is reproducing gives then
 \begin{equation}
   [\rho_\infty(f_\infty) e_T](y) = \frac{1}{\sqrt{2l+1}} Y^n_l(0)   Y^0_l(\langle \mathbf{k} , y \rangle) .
 \end{equation}
Now we use the reproducing property again to calculate
\begin{align*} 
\langle \rho_\infty(f_\infty) e_T , e_{T'} \rangle &=\frac{Y^n_l(0) }{\sqrt{2l+1}}  \int_{y \in S^2}Y^0_l(\langle \mathbf{k} , y \rangle)Y^n_l( \langle y, \mathbf{i} \rangle ) e^{-i n \theta} dy \\
&= \frac{Y^n_l(0)^2}{{2l+1}},
\end{align*}
which gives the value for  $\tilde{J}_{\rho}(f_\infty)$. 

The local $L$-factors at 2 can be calculated by local Langlands. For background on the following discussion see the article of Tate \cite{TATE}. Since $\pi$ has conductor 2 and trivial central character it follows that $\pi_2$ and hence $\Pi_{(1 + i )}$ are twists of the Steinberg representation. Then $\Pi_{(1 + i)}$ corresponds to a Weil-Deligne parameter consisting of a representation $\tau : W_{E_{(1+i)}} \to \GL_2(\C)$ of the Weil group of $E_{(1 +i )}$ together with the nilpotent endomorphism of $\C^2$
\begin{equation}
N = \left( \begin{array}{cc}
0 & 1 \\
0 & 0 \end{array} \right).
\end{equation}
The geometric Frobenius $\Phi$ maps under $\tau$ to
\begin{equation}
\tau(\Phi) = \left( \begin{array}{cc}
\chi(\Phi) 2^{-1/2} & 0 \\
0 & \chi(\Phi) 2^{1/2} \end{array} \right)
\end{equation}
where $\chi$ is the character parameterizing the twist of Steinberg. Then the Weil-Deligne parameter for $\Pi_{(1 + i )} \otimes (\Omega_n)_{(1+i)}$ consists of the same $N$ together with a representation $\tau'$ of the Weil group under which $\Phi$ maps to
\begin{equation}
\tau'(\Phi) = \left( \begin{array}{cc}
\chi(\Phi) \alpha_n(\Phi) 2^{-1/2} & 0 \\
0 & \chi(\Phi) \alpha_n(\Phi) 2^{1/2} \end{array} \right)
\end{equation}
where $\a_n$ corresponds to $(\Omega_n)_{(1+i)}$ via local class field theory. The local $L$-factor is then given by
\begin{align}
L_{(1+i)}(1/2 , \Pi \otimes \Omega_n ) &= \det( 1 - \tau'(\Phi)\lvert_{\ker{N}} 2^{-1/2} )^{-1} \\
&=  \left( 1 - \frac{\chi(\Phi)\alpha_n(\Phi)}{ 2} \right)^{-1}.
\end{align}
As the characters are unitary we have
\begin{equation}
L_{(1+i)}(1/2 , \Pi \otimes \Omega_n ) \asymp 1.
\end{equation}
Similarly the Weil-Deligne parameter for $\pi_2$ consists of a representation $\sigma : W_{\Q_2} \to \GL_2(\C)$ with the same $N$ as before and geometric Frobenius $\Psi$ mapping to 
\begin{equation} 
\sigma(\Psi) = \left( \begin{array}{cc}
\chi(\Psi) 2^{-1/2} & 0 \\
0 & \chi(\Psi) 2^{1/2} \end{array} \right)
\end{equation}
for some unitary $\chi$. Now let $e_1 , e_2$ denote the standard basis for $\C^2$ and $\tilde{e}_1 , \tilde{e}_2$ denote the dual basis. Then the adjoint representation of $\GL_2(\C)$ can be realized as the 3 dimensional invariant subspace of $\C^2 \otimes (\C^{2})^*$ under $\mathrm{standard} \otimes \widetilde{\mathrm{standard}}$ given by
\begin{equation}\label{eq:Vbasis}
V = \langle e_1 \otimes \tilde{e}_1 - e_2 \otimes \tilde{e}_2 , e_1 \otimes \tilde{e}_2, e_2 \otimes \tilde{e}_1 \rangle.
\end{equation} 
The three dimensional Weil-Deligne representation for $\mathrm{Ad}(\pi_2)$ is given by the restriction of $\sigma \otimes \tilde{\sigma}$ to $V$ together with the nilpotent element $N_{\mathrm{Ad}}$ which is the restriction of $N \otimes 1 - 1 \otimes N^T$ to $V$. Therefore with respect to the basis in \eqref{eq:Vbasis} we have
\begin{equation}
\mathrm{Ad}(\sigma)(\Psi)  = \left( \begin{array}{ccc}
1 & 0 & 0 \\
0 & 2^{-1} & 0 \\
0 & 0 & 2 \end{array} \right) \quad
N_{\mathrm{Ad}}  = \left( \begin{array}{ccc}
0 & 0 & 1 \\
-2 & 0 & 0 \\
0 & 0 & 0 \end{array} \right).
\end{equation}
Then
\begin{equation}
L_2(1 , \pi , \mathrm{Ad}) = \det( 1 - \mathrm{Ad}(\sigma)(\Psi)\lvert_{\ker N_{\mathrm{Ad} }} 2^{-1} )^{-1} = 4/3.
\end{equation}

Substituting our calculations into \eqref{eq:start} gives the desired result.
\end{proof}
There is a slightly more natural way to write Lemma \ref{fourier1} which we state now.

\begin{lemma}\label{fourier2} With all our previous notation, suppose that $\vp \in \H_l^{\O^\times}$ is an eigenfunction of the Hecke operators and the spherical Laplacian which has the ultraspherical expansion around $\mathbf{i}$
\begin{equation}
\vp( x ) = \sum_{|m| \leq l} a_m Y^m_l( \langle x , \mathbf{i} \rangle ) e^{i m \theta(x) } , \quad \sum_{|m| \leq l} |a_m|^2 = 1 .
\end{equation}
Suppose that $l$ is even. Then $a_m$ is zero unless $m \equiv 0 \bmod 2$ and in this case 
\begin{equation}
|a_m|^2 \asymp \frac{ 1 }{2l+1}\frac{ L(1/2 , \Pi \otimes \Omega_m ) }{ L(1,\pi,\mathrm{Ad} )} .
\end{equation}
\end{lemma}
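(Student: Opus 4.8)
The plan is to identify the integral appearing in Lemma~\ref{fourier1} with $a_n\,Y^n_l(0)$ up to a harmless constant, and then read off the stated asymptotic from Lemma~\ref{fourier1}; the parity claim will come directly from $\O^\times$-invariance. First I would dispose of the vanishing statement. The quaternion $\mathbf{i}\in\O^\times$ acts on $S^2$ by $x\mapsto \mathbf{i}x\bar{\mathbf{i}}/n(\mathbf{i})$, which is rotation by the angle $\pi$ about the axis through $\mathbf{i}$; in particular it preserves $\langle x,\mathbf{i}\rangle$ and sends the azimuthal angle $\theta(x)$ to $\theta(x)+\pi$. Applying this to the ultraspherical expansion of $\vp$ and using that $\vp$ is $\O^\times$-invariant, the uniqueness of the expansion forces $a_m=(-1)^m a_m$, whence $a_m=0$ for odd $m$. (This part needs neither $l$ even nor Lemma~\ref{fourier1}.)

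Next I would compute the restriction of $\vp$ to the geodesic $\{e^{\mathbf{i}\theta}\mathbf{k}\}$. Since $\mathbf{k}$ is orthogonal to $\mathbf{i}$, this geodesic is exactly the great circle $\{x\in S^2:\langle x,\mathbf{i}\rangle=0\}$, parametrized by the azimuthal variable $\theta$ around the axis of $\mathbf{i}$. Substituting $\langle x,\mathbf{i}\rangle=0$ into the expansion turns $\vp$ along this circle into the Fourier series $\sum_{|m|\le l} a_m Y^m_l(0)e^{im\theta}$, so that, up to the change of variable already performed in Section~\ref{periodsection},
\begin{equation*}
\int_{\theta\in[0,2\pi)}\vp(e^{\mathbf{i}\theta}\mathbf{k})\,e^{-in\theta}\,d\theta = 2\pi\, a_n\, Y^n_l(0).
\end{equation*}
Hence the left-hand side of Lemma~\ref{fourier1} is $\asymp |a_n|^2\,Y^n_l(0)^2$.

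Finally, combining this with Lemma~\ref{fourier1} gives
\begin{equation*}
|a_n|^2\,Y^n_l(0)^2 \asymp \frac{Y^n_l(0)^2}{2l+1}\,\frac{L(1/2,\Pi\otimes\Omega_n)}{L(1,\pi,\mathrm{Ad})}.
\end{equation*}
For odd $n$ both sides vanish (by the first step, and indeed $Y^n_l(0)=0$ as well). For even $n$ with $|n|\le l$, the associated Legendre function $P^n_l$ has parity $(-1)^{l+n}$, so $Y^n_l(0)\neq0$ precisely because $l+n$ is even once $l$ is even; dividing through by $Y^n_l(0)^2\neq0$ yields the claim. The only genuine obstacle is this nonvanishing of $Y^n_l(0)$, which is exactly where the hypothesis that $l$ is even is used; the rest is bookkeeping of absolute nonzero constants (the factor $2\pi$, the normalization of $Y^n_l$, and the angle-doubling in Section~\ref{periodsection}), all irrelevant for an $\asymp$ statement.
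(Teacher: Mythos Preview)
Your proposal is correct and follows essentially the same route as the paper: identify the Fourier coefficient on the equator $\{\langle x,\mathbf{i}\rangle=0\}$ as $a_n Y^n_l(0)$, plug into Lemma~\ref{fourier1}, and cancel $Y^n_l(0)^2$ when it is nonzero. One small difference worth noting: for the vanishing of $a_m$ at odd $m$ the paper invokes the reflection $R_x\in\Gamma$ (available only for $l$ even), whereas you use the rotation by $\pi$ about $\mathbf{i}$ coming from $\mathbf{i}\in\O^\times$ itself, which gives the parity claim without the hypothesis $l$ even---a cleaner argument. One minor caution: the parity $(-1)^{l+n}$ of $P^n_l$ shows $Y^n_l(0)=0$ when $l+n$ is odd, but parity alone does not prove nonvanishing when $l+n$ is even; for that you should appeal to the explicit binomial formula for $Y^m_l(0)$ recorded later in the paper (or any standard reference).
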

\begin{proof}
The square of the absolute value of the $m$th Fourier coefficient of the function $\vp\lvert_{\{\langle x , \mathbf{i} \rangle= 0 \}}$ is given by Lemma \ref{fourier1}. On the other hand the square of the absolute value of the $m$th Fourier coefficient of $\vp\lvert_{\{\langle x , \mathbf{i} \rangle = 0 \} }$ is
\begin{equation}
|a_m|^2 Y^m_l(0)^2.
\end{equation}
Comparing the two expressions, the factors of $Y^m_l(0)^2$ cancel when they are not zero - this is true when $m \equiv l \equiv 0 \bmod 2$. 

It remains to note that if $m$ is odd then $a_m = 0$. This follows from the invariance properties of $\vp$ - we discuss this in more detail in the proof of Lemma \ref{restriction}.
\end{proof}

\section{The subconvexity hypothesis} In this section we introduce a hypothesis which controls the size of the $L$-functions which are related to Fourier/ultraspherical coefficients. For a general automorphic representation $\pi$ of $\GL_n(\AA_F)$, $F$ a number field, Iwaniec and Sarnak have defined the \textit{analytic conductor} $C(\pi)$. This $C(\pi)$ is the product of the usual conductor and a parameter which measures the infinity type of the automorphic representation, and it turns out that $C(\pi)$ is the natural quantity by which to estimate the central value $L(1/2 , \pi)$.

In our setting the classical conductor does not vary, so estimates versus $C(\pi)$ are in terms of the archimedean (eigenvalue/weight) aspect. One expects a (relatively) easy estimate of the form
\begin{equation}
L(1/2 , \pi) \ll_{n, F, \e} C(\pi)^{1/4 + \e} .
\end{equation}
Improving over this bound is called the \textit{subconvexity problem} and has been accomplished in some cases. For now we record the important result of Michel and Venkatesh \cite[Theorem 1.1]{MV}.
\begin{thm}[Michel, Venkatesh]\label{MV} There is an absolute constant $\d > 0$ such that: for $\pi$ an automorphic representation of $\GL_1(\AA_F)$ or $\GL_2(\AA_F)$ (with unitary central character), one has 
\begin{equation}
L(1/2 , \pi) \ll_F C(\pi)^{1/4 - \d}. 
\end{equation}
\end{thm}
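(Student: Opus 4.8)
A complete proof is far beyond the scope of this paper, so we only indicate the shape of the argument of \cite{MV}. The plan is to trade the bound for a central $L$-value against a bound for a period integral of an automorphic form, and then to beat the convexity estimate for that period by amplification combined with a geometric expansion of a Hecke kernel, the decisive input being a quantitative spectral gap.

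First I would pass from $L$-functions to periods. For $\pi$ on $\GL_1$ this is Hecke's classical integral representation; for $\pi$ on $\GL_2$ one introduces a well-chosen auxiliary Hecke character and invokes a Waldspurger--Jacquet period formula --- of exactly the flavour of Lemma \ref{fourier1} --- to realize $|L(1/2,\pi)|^2$ (respectively $|L(1/2,\pi\otimes\Omega)|^2$) as a toric period
\begin{equation}
\mathcal{P}(\vp)=\int_{[T]}\vp(t)\,\Omega(t)\,dt
\end{equation}
of an $L^2$-normalized vector $\vp$ in a representation whose conductor and archimedean parameter are bounded in terms of $C(\pi)$. The convexity bound $L(1/2,\pi)\ll_F C(\pi)^{1/4+\e}$ is precisely what one gets from the trivial estimate $|\mathcal{P}(\vp)|\ll\mathrm{vol}([T])^{1/2}$, so the goal is to save a fixed power $C(\pi)^{-\d}$.

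Next I would amplify and go geometric. Averaging $|\mathcal{P}(\vp)|^2$ against a Hecke amplifier $A=\sum_{\ell\sim L}x_\ell T_\ell$ with $\ell$ prime, $x_\ell=\overline{\lambda_\vp(\ell)}$ and $L$ a small positive power of $C(\pi)$, and expanding the kernel of $T_{\ell_1}T_{\ell_2}$ over the Hecke correspondence (elements of norm $\ell_1\ell_2$), one reduces to a weighted double sum over pairs of torus points related by such an element. The terms whose relating element normalizes $T$ constitute the main (degenerate) contribution, of size roughly $L\cdot C(\pi)^{1/2+\e}$, which only recovers convexity after dividing by the amplifier weight $\gg L^{1-\e}$; everything hinges on showing that the remaining (generic) terms are of strictly smaller order.

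The main obstacle, and the technical core of \cite{MV}, is exactly this estimate for the generic geometric terms, uniform in the conductor and in the archimedean parameter. It is obtained by fusing an archimedean mixing input --- a quantitative spectral gap (property $(\tau)$), bounding matrix coefficients so that a translate $t\mapsto\gamma t$ of the torus orbit, being transverse to $T$, contributes with genuine cancellation --- with a non-archimedean Diophantine count of the elements $\gamma$ of bounded norm having a prescribed interaction with $T$, congruence conditions confining $\gamma$ to thin sets, in the same spirit as Lemmas \ref{counting} and \ref{stabilizer}. Optimizing $L$ against the resulting saving, and assembling the local bounds over all places of $F$ into a hybrid estimate, produces an absolute $\d>0$. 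The $\GL_1$ case is the degenerate limit of this machinery and is technically simpler, whereas the $\GL_2$ case genuinely requires the analysis of the generic terms.
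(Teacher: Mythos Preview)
The paper does not prove this theorem at all; it is quoted as \cite[Theorem 1.1]{MV} and used as a black box (``For now we record the important result of Michel and Venkatesh''). There is therefore no proof in the paper to compare your sketch against. Your outline is a reasonable bird's-eye summary of the Michel--Venkatesh strategy --- periods in place of $L$-values, amplification, geometric expansion of a Hecke kernel, and a spectral-gap/equidistribution input for the off-diagonal --- and in that sense goes well beyond what the paper provides, but the paper itself simply cites the result without any indication of the argument.
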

In the setting of Lemma \ref{fourier2} we are interested in the $L$-value $L(1/2 , \Pi \otimes \Omega_m)$ (we will address $L(1 , \pi , \mathrm{Ad})$ momentarily). We know that $L(s , \Pi \otimes \Omega_m)$ is the automorphic $L$-function of an automorphic representation of $\GL_2(\AA_{E})$ (so Theorem \ref{MV} applies to the central value). The Gamma factors for $L(s , \Pi \otimes \Omega_m)$ in the case $|m| \leq l$ are \cite[pg. 169]{MW}
\begin{equation}
\G_{\C}\left(s + \frac{k-1}{2} + |m|\right)\G_{\C}\left(s + \frac{k-1}{2} - |m|\right),
\end{equation}
where $k$ is the weight of the holomorphic cusp form associated to $\pi$ and the complex Gamma function is
\begin{equation}
\G_\C(s) \equiv 2(2\pi)^{-s}\G(s) .
\end{equation}
Following the recipe for the analytic conductor given in \cite[pg. 207]{MV} we have
\begin{equation}
C(\Pi \otimes \Omega_m) = \left(2 + \frac{k-1}{2} + |m|\right)^2 \left(2 + \frac{k-1}{2} - |m|\right)^2 .
\end{equation}
Recall that $\pi$ is the Jacquet-Langlands transfer of a representation generated by a spherical harmonic of degree $l$, so we have $k = 2l+2$ and the analytic conductor can be written
\begin{equation}\label{eq:conductor}
C(\Pi \otimes \Omega_m) \asymp (5/2 + l + |m|)^2(5/2 + l - |m|)^2 .
\end{equation}

We will want to assume a general subconvexity hypothesis for these $L$-functions which we state now.
\\
\begin{description}
\item[Hypothesis SC($\delta$)] $\quad L(1/2 , \Pi \otimes \Omega_m) \ll_\delta C(\Pi \otimes \Omega_m)^{1/4 - \delta} $. \\
\end{description}
The ability to take $\delta$ arbitrarily close to $1/4$ in SC($\delta$) is a section of the Generalized Lindel\"{o}f hypothesis. We therefore consider only $\delta$ values in $(0,1/4)$. The result of Michel and Venkatesh says that SC($\delta$) is true for some $\delta \in (0,1/4)$.

It is well known that the values $L(1, \pi , \mathrm{Ad})$ satisfy
\begin{equation}
(\log k)^{-2} \ll L(1, \pi , \mathrm{Ad}) \ll ( \log k)^2 ,
\end{equation}
or keeping with our $l$ parameter
\begin{equation}
(\log l)^{-2} \ll L(1, \pi , \mathrm{Ad}) \ll (\log l)^2 .
\end{equation}

\begin{rmk}Given $\vp \in \H_l^{\O^\times}$ a Hecke eigenfunction with 
\begin{equation}
\vp( x ) = \sum_{|m| \leq l} a_m Y^m_l( \langle x , \mathbf{i} \rangle ) e^{i m \theta(x) } , \quad \sum_{|m| \leq l} |a_m|^2 = 1 
\end{equation}
we have by Lemma \ref{fourier2} that when $m$ and $l$ are even 
\begin{equation}
\frac{ 1 }{2l+1}\frac{ L(1/2 , \Pi \otimes \Omega_m ) }{ L(1,\pi,\mathrm{Ad} )} \ll |a_m|^2 \leq 1.
\end{equation}
Thus the inequality
\begin{equation}
L(1/2 , \Pi \otimes \Omega_m ) \ll l \: L(1,\pi,\mathrm{Ad}) \ll l (\log l)^2
\end{equation}
gives a convexity bound for the $L$-value $L(1/2 , \Pi \otimes \Omega_m )$ when $|m|$ is not too close to $l$.
\end{rmk}

To conclude this section we note that using the bounds for $L(1 , \pi , \mathrm{Ad})$ along with SC($\delta$) in Lemma \ref{fourier2} gives the upper bound for the ultraspherical coefficient
\begin{equation}\label{eq:bounda}
|a_m| \ll \frac{\log l}{\sqrt{l}} (1 + l + |m|)^{1/4 -\delta}(1 + l - |m|)^{1/4 - \delta}
\end{equation}
where we replaced $5/2$ with $1$ in the expression \eqref{eq:conductor}  for $C(\Pi \otimes \Omega_m)$ for simplicity.

\section{Values of ultraspherical polynomials and an $L^2$ restriction lower bound}
Notice that the value $Y^m_l(0)$ appears in Lemma \ref{fourier1}. This is given by VanderKam \cite[pg. 338]{VAND} as
\begin{equation}
Y^m_l(0) = \frac{\sqrt{2l+1}}{2^l} \sqrt{ \binom{l+ |m|}{\frac{1}{2}(l+|m|)}\binom{l-|m|}{\frac{1}{2}(l-|m|)} }
\end{equation}
where the combinatorial symbols are understood to vanish when they have non integer arguments.
Using the Stirling bounds
\begin{equation}
n^{(n+1)/2} e^{-n} \ll n! \ll n^{(n+1)/2} e^{-n}
\end{equation}
to estimate the binomial coefficients (with a little adjustment for $|m| = l$) gives
\begin{equation}\label{eq:boundY}
 Y^m_l(0) \asymp \frac{\sqrt{l}}{(1 + l + |m|)^{1/4}(1+l - |m|)^{1/4}}.
\end{equation}
In particular
\begin{equation}\label{eq:valueboundbelow}
Y^m_l(0) \gg 1
\end{equation}
which gives an easy $L^2$ restriction lower bound in the following Lemma.

\begin{lemma}\label{restriction}
Let $\g$ denote the geodesic $\{ \langle x , \mathbf{i} \rangle = 0 \}$ in $S^2$. When $l$ is even and $\vp \in \H_l^{\O^\times}$ we have
\begin{equation}
\| \vp\lvert_\g \|_{L^2(\g)} \gg \| \vp \|_{L^2(S^2)}.
\end{equation}
\end{lemma}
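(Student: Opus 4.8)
The plan is to work in the ultraspherical expansion of $\vp$ about the pole $\mathbf{i}$, for which the geodesic $\g = \{\langle x, \mathbf{i}\rangle = 0\}$ is exactly the equator. First I would write $\vp$ in the orthonormal ultraspherical basis around $\mathbf{i}$ as in Lemma \ref{fourier2},
\begin{equation}
\vp(x) = \sum_{|m| \leq l} a_m\, Y^m_l(\langle x, \mathbf{i}\rangle)\, e^{i m \theta(x)}, \qquad \sum_{|m| \leq l} |a_m|^2 = \|\vp\|_{L^2(S^2)}^2,
\end{equation}
where $\theta$ is the longitude about the axis of $\mathbf{i}$. Restricting to $\g$, along which $\langle x, \mathbf{i}\rangle = 0$ and $x$ is parametrized by $\theta \in [0, 2\pi)$, gives $\vp\lvert_\g(\theta) = \sum_{|m|\leq l} a_m Y^m_l(0) e^{i m \theta}$; since the characters $e^{i m \theta}$ are orthogonal on the circle, Parseval yields
\begin{equation}
\|\vp\lvert_\g\|_{L^2(\g)}^2 \asymp \sum_{|m| \leq l} |a_m|^2\, Y^m_l(0)^2 .
\end{equation}

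Next I would eliminate the odd Fourier modes. The unit $\mathbf{i} \in \O^\times$ acts on $S^2$ by $x \mapsto -\mathbf{i} x \mathbf{i}$, which fixes $\pm\mathbf{i}$ and rotates by $\pi$ about the axis of $\mathbf{i}$, i.e.\ sends $\theta \mapsto \theta + \pi$. Since $\vp \in \H_l^{\O^\times}$ is invariant under this rotation, comparing $\vp(x) = \sum_{|m|\leq l} a_m Y^m_l(\langle x,\mathbf{i}\rangle) e^{i m \theta(x)}$ with $\vp(-\mathbf{i}x\mathbf{i}) = \sum_{|m|\leq l} (-1)^m a_m Y^m_l(\langle x,\mathbf{i}\rangle) e^{i m \theta(x)}$ and using $\vp(-\mathbf{i}x\mathbf{i}) = \vp(x)$ forces $a_m = 0$ for every odd $m$. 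This supplies the vanishing deferred from the proof of Lemma \ref{fourier2}. Consequently the displayed sum runs over even $m$ only, and $\sum_{m \text{ even}} |a_m|^2 = \sum_{|m| \leq l} |a_m|^2 = \|\vp\|_{L^2(S^2)}^2$.

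It remains to bound $Y^m_l(0)$ from below for even $m$. Since both $l$ and $m$ are even, the arguments $\tfrac{1}{2}(l \pm |m|)$ of the binomial symbols in VanderKam's formula for $Y^m_l(0)$ are integers, so $Y^m_l(0) \neq 0$, and the estimate \eqref{eq:boundY}, equivalently \eqref{eq:valueboundbelow}, gives $Y^m_l(0) \gg 1$ uniformly over such $m$. Plugging this in gives
\begin{equation}
\|\vp\lvert_\g\|_{L^2(\g)}^2 \gg \sum_{\substack{|m| \leq l \\ m \text{ even}}} |a_m|^2 = \|\vp\|_{L^2(S^2)}^2 ,
\end{equation}
and taking square roots proves the Lemma.

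There is no genuine obstacle here: the whole argument rests on the input \eqref{eq:boundY}. The one point worth noting is that $Y^m_l(0)$ vanishes precisely when $l + |m|$ is odd, which for even $l$ happens only for odd $m$ — exactly the modes already killed by $\O^\times$-invariance — so the two sources of potential vanishing coincide and are removed together. This is also the structural reason the hypothesis that $l$ is even cannot be dropped: for odd $l$ every admissible $m$ is even by the same invariance, hence $l + |m|$ is odd and $Y^m_l(0) = 0$ for all of them, so $\vp\lvert_\g$ would vanish identically.
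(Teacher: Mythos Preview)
Your proof is correct and follows essentially the same route as the paper's: ultraspherical expansion about $\mathbf{i}$, Parseval on the equator, elimination of the odd modes by symmetry, and then the uniform lower bound $Y^m_l(0) \gg 1$ from \eqref{eq:valueboundbelow}. The only minor difference is the symmetry invoked to kill odd $m$: the paper uses the reflection $R_x:(x,y,z)\mapsto(-x,y,z)$ from the enlarged group $\Gamma$ of Section~\ref{euler} (which is available because $l$ is even), whereas you use the $\pi$-rotation about the $\mathbf{i}$-axis coming directly from $\mathbf{i}\in\O^\times$ --- a slightly cleaner choice that works for all $l$ and makes transparent, as your closing remark observes, that the hypothesis $l$ even is still needed precisely so that the surviving even modes have $Y^m_l(0)\neq 0$.
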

\begin{proof}
When $l$ is even and $\vp \in \H_l^{\O^\times}$ we have noted in Section \ref{euler} that $\vp$ is invariant under the involution of $S^2$
\begin{equation}
(x,y,z) \mapsto (-x , y , z).
\end{equation}
It follows that in the expansion of $\vp$ about $\mathbf{i}$ the only nonzero coefficients correspond to even $m$, i.e.
\begin{equation}
\vp( x ) = \sum_{\substack {|m| \leq l \\ m \equiv 0 \bmod 2}} a_m Y^m_l( \langle x , \mathbf{i} \rangle ) e^{i m \theta(x) }.
\end{equation}
By Parseval, the $L^2$ norm of $\vp$ restricted to $\g$ is
\begin{equation}
\| \vp\lvert_\g \|_{L^2(\g)} = \sum_{m \equiv 0 \bmod 2} |a_m|^2 Y^m_l(0)^2 ,
\end{equation}
and using the bound below for $Y^m_l(0)$ from \eqref{eq:valueboundbelow} gives the required
\begin{equation}
\| \vp \lvert_\g \|_{L^2(\g)} \gg \sum_{m \equiv 0 \bmod 2 } |a_m|^2 = \| \vp \|_{L^2(S^2)}.
\end{equation}
\end{proof} 


\section{Proof of Theorem \ref{uniformzeros}}
Just as in our proof of Theorem \ref{totalzeros}, Theorem \ref{uniformzeros} follows from Lemma \ref{lemma:euler} together with the following Proposition \ref{conditionalprop}.
\begin{prop}\label{conditionalprop}
Assuming the generalized Lindel\"{o}f hypothesis, then for any $\e > 0$, $\vp$ has $\gg_\e l^{1/12 - \e}$ zeros on the geodesic
\begin{equation}
\{ e^{\mathbf{i}\theta}\mathbf{k} \} \subset S^2 = \{ x\mathbf{i} + y \mathbf{j} + z \mathbf{k} : x^2 + y^2 + z^2 = 1 \}
\end{equation}
when $l$ is even and $\vp$ is a Hecke eigenfunction in $\H^{\O^\times}_l$.
\end{prop}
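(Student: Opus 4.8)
The plan is to follow the argument of Ghosh, Reznikov and Sarnak for Theorem~\ref{GRStheorem}, which on the sphere becomes cleaner because everything is genuinely finite-dimensional and the geodesic is compact. First I would reduce to a zero-counting problem for one real trigonometric polynomial. As a subset of $S^2$ the geodesic $\{e^{\mathbf{i}\theta}\mathbf{k}\}$ is exactly the great circle $\{\langle x,\mathbf{i}\rangle=0\}$ of Lemma~\ref{restriction}, so the restriction $F(\phi)\equiv\vp(\cos\phi\,\mathbf{k}-\sin\phi\,\mathbf{j})$ is a real trigonometric polynomial of degree $\le l$ whose Fourier coefficients, in the notation of Lemma~\ref{fourier2}, are $b_m=a_m Y^m_l(0)$, and only even $m$ occur. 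By Lemma~\ref{restriction}, $\|F\|_{L^2}\gg\|\vp\|_{L^2(S^2)}=1$, so $F\not\equiv 0$ and has only finitely many, isolated zeros; hence it suffices to exhibit $\gg_\e l^{1/12-\e}$ sign changes of $F$ on $[0,2\pi)$, which (sign changes being zeros) gives Proposition~\ref{conditionalprop}, and then Lemma~\ref{lemma:euler} completes the proof of Theorem~\ref{uniformzeros}.

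I would use three inputs: (i) the restriction lower bound $\|F\|_2\gg 1$ just recalled (Lemma~\ref{restriction}); (ii) the subconvex sup-norm bound $\|F\|_\infty\le\|\vp\|_\infty\ll_\e l^{5/12+\e}$ (Theorem~\ref{vkinfinity}); and (iii) under the generalized Lindel\"of hypothesis, the bound on the Fourier coefficients obtained from Lemma~\ref{fourier2} together with \eqref{eq:bounda} and \eqref{eq:boundY} upon taking $\delta$ arbitrarily close to $1/4$ in Hypothesis SC$(\delta)$, namely $|b_m|=|a_m|Y^m_l(0)\ll_\e l^{\e}(1+l+|m|)^{-1/4}(1+l-|m|)^{-1/4}$; in particular $|b_m|\ll_\e l^{-1/2+\e}$ for all $|m|\le l/2$.

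The core is then the contradiction argument of \cite{GRS}. Suppose $F$ has only $N$ sign changes, at $\phi_1<\dots<\phi_N$ with $N$ even (and $N<l$, else there is nothing to prove). Let $q$ be $\prod_{j=1}^{N}\sin\tfrac{\phi-\phi_j}{2}$, rescaled so that $\|q\|_\infty=1$ and with overall sign chosen so that $Fq\ge 0$; then $\deg q\le N/2$ and $\int Fq=\|Fq\|_{1}$. Since $\widehat{q}$ is supported in $|m|\le N/2$ with $|\widehat{q}(m)|\le\|q\|_\infty=1$, the Fourier expansion of $F$ and (iii) give, via H\"older on the coefficient side,
\[
\left|\int_0^{2\pi}F\,q\,d\phi\right|\;\ll\;\sum_{|m|\le N/2}|b_m|\;\ll_\e\;N\,l^{-1/2+\e}.
\]
On the other hand $\|Fq\|_{1}\ge\|Fq\|_2^2/\|Fq\|_\infty\ge\|Fq\|_2^2/\|F\|_\infty$, and the decisive point is that $q$ can be chosen so as to retain almost all of the $L^2$-mass of $F$, namely $\|Fq\|_2\gg_\e l^{-\e}\|F\|_2\gg_\e l^{-\e}$. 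Combining with (i) and (ii) yields $l^{-5/12-\e}\ll_\e N\,l^{-1/2+\e}$, hence $N\gg_\e l^{1/12-\e}$.

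The main obstacle is precisely the claim $\|Fq\|_2\gg_\e l^{-\e}\|F\|_2$. Any $q$ whose sign pattern matches $F$'s must vanish at all $N$ sign changes, so if these are badly clustered a naive product of sines can destroy most of $\|F\|_2^2$; moreover, since $\deg F=l\gg N$, the function $F$ may concentrate its mass in short arcs near its sign changes, exactly where $q$ is forced to be small. Handling this requires the finer constructions of \cite{GRS}: passing to a sign-definite arc that carries a positive proportion of the $L^2$-mass, choosing a well-separated subfamily of sign changes, and invoking a Bernstein-type bound to prevent $F$ from concentrating on a scale shorter than permitted by $\|F\|_\infty$ and the degree. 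This is the only genuinely technical step, and I expect it to transfer essentially verbatim from the modular curve; all the other ingredients — the period formula of Lemmas~\ref{fourier1}--\ref{fourier2}, the $L^\infty$ bound, and the $L^2$ restriction bound — are already available and are, if anything, simpler here than in \cite{GRS}.
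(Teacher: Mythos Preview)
Your ingredients (i)--(iii) are exactly right, and the overall scheme of comparing an upper and a lower bound for a test integral is correct in spirit, but the specific route you take diverges from the paper's (and from the actual argument in \cite{GRS}) at the decisive step, and the divergence creates precisely the obstacle you flag. The paper does \emph{not} build a global sign-matching trigonometric polynomial $q$; it tests $F=\vp\lvert_\gamma$ against the characteristic function of each nodal arc $I_i$ separately. Writing $\|F\|_{L^1(\gamma)}=\sum_i\big|\int_{I_i}\vp\big|$, one bounds each summand by $\sum_m |b_m|\cdot\big|\int_{I_i}e^{-im\theta}\,d\theta\big|$ and uses the elementary decay $\big|\int_I e^{-im\theta}\,d\theta\big|\ll 1/(1+|m|)$. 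Splitting at $|m|=l/2$, the small-$m$ range contributes $\ll_\e l^{-1/2+\e}$ under Lindel\"of (the factor $1/(1+|m|)$ produces only a harmless $\log l$), and the large-$m$ range contributes $\ll l^{-1/2}\|F\|_{L^2}$ by Cauchy--Schwarz and Parseval. Then $\|F\|_{L^2}^2\le\|F\|_{L^\infty}\|F\|_{L^1}$ together with Theorem~\ref{vkinfinity} and Lemma~\ref{restriction} gives $\big|\int_{I_i}\vp\big|\ll_\e l^{-1/12+\e}\|F\|_{L^1}$, and summing over the $N$ arcs forces $N\gg_\e l^{1/12-\e}$. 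No lower bound of the type $\|Fq\|_2\gg l^{-\e}$ is ever needed.

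Your obstacle is therefore self-inflicted. By choosing a test function $q$ with $|\widehat q(m)|\le 1$ and no decay in $m$, you lose the $1/(1+|m|)$ factor that makes the paper's per-interval bound independent of $N$, and you are then forced to compensate with a lower bound on $\|Fq\|_2$ that is genuinely delicate and is \emph{not} supplied by \cite{GRS}. The ``finer constructions'' you allude to (well-separated subfamilies of sign changes, Bernstein-type inequalities) are not part of the GRS argument for this step --- GRS uses exactly the interval-by-interval method above --- and it is not clear they would close the gap here, since $F$ has degree $l\gg N$ and could in principle carry most of its $L^2$ mass on arcs of length $\asymp 1/l$ near its sign changes, exactly where any sign-matching $q$ of degree $\le N/2$ is forced to be small. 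The remedy is simply to replace your $q$ by $\chi_{I_i}$: its Fourier transform has the $1/|m|$ decay that carries the whole argument, and the rest of your write-up then goes through.
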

\begin{proof}
Let $l$ be a positive even integer and $\vp \in \H_l^{\O^\times}$ a Hecke eigenfunction. We assume that $\| \vp \|_{L^2(S^2)} = 1$. We denote by $\g$ the geodesic 
\begin{equation}
\{ (0,y,z) \in S^2 \} = \{ v \in S^2 : \langle v , \mathbf{i} \rangle = 0 \}.
\end{equation}
We divide $\g$ into intervals which have as endpoints roots of $\vp \lvert_\g$. That is, we write 
\begin{equation}
\g = \coprod_{i = 0}^{N-1} I_i, \quad I_i \equiv [(0,y_i,z_i),(0,y_{i+1},z_{i+1}) ]
\end{equation}
where the subscripts run mod $N$ and $\vp(0,y,z) = 0$ if and only if $(y,z) = (y_i,z_i)$ for some $i$. Moreover it is understood that the $(y_i , z_i)$ are in the correct order mod $N$. Then $\vp$ has $N$ zeros on $\g$.

It is clear then that $\vp$ has constant sign on the interior of each $I_i$. This means we can estimate $\| \vp\lvert_\g \|_{L^1(\g)}$ as
\begin{equation}
\| \vp\lvert_\g \|_{L^1(\g)} = \int_\g | \vp(x) | d\g(x) = \sum_{i=0}^{N-1} \left| \int_{I_i} \vp(x) d\g(x) \right|.
\end{equation}
If we can
\begin{enumerate}
\item Show that each of the contributions $ \left| \int_{I_i} \vp(x) d\g(x) \right|$ is small and
\item Show that $\| \vp\lvert_\g \|_{L^1(\g)}$ is large (in comparison)
\end{enumerate}
then it will follow that $N$ must be large. This is the approach of Ghosh, Reznikov and Sarnak in \cite{GRS} which we follow rather closely.

\textit{We now assume SC($\delta$) for some $\delta$ to be chosen in $(0,1/4)$.}

We write as before the ultraspherical expansion of $\vp$ around $\mathbf{i}$
\begin{equation}
\vp( x ) = \sum_{|m| \leq l} a_m Y^m_l( \langle x , \mathbf{i} \rangle ) e^{i m \theta(x) } ,
\end{equation}
so that if we define
\begin{equation}
b_m \equiv a_m Y^m_l(0)
\end{equation}
we have the Fourier expansion 
\begin{equation}
\vp\lvert_\g(e^{\mathbf{i}\theta}\mathbf{k}) = \sum_{|m| \leq l} b_m e^{im\theta}.
\end{equation}
As $\vp$ is $L^2$-normalized Lemma \ref{restriction} gives
\begin{equation}
\| \vp\lvert_\g \|_{L^2(\g)} \gg 1.
\end{equation} 
The bound for $|a_m|$ from \eqref{eq:bounda} together with the bound for $Y^m_l(0)$ in \eqref{eq:boundY} gives
\begin{equation}
| b_m | \ll \frac{\log l}{(1 + l + |m|)^\delta(1 + l - |m|)^{\delta}} .
\end{equation}
Another crucial input is Theorem \ref{vkinfinity} which is the $L^\infty$ bound of VanderKam \cite{VAND}
\begin{equation}\label{eq:linfinity}
\| \vp \|_{\infty} \ll_\e l^{5/12 + \e} .
\end{equation}

We fix an $i \in [0,N-1]$ and write for simplicity $I = I_i$. We will bound $\left| \int_{I} \vp(x) d\g(x) \right|$ in terms of $\| \vp\lvert_\g \|_{L^1(\g)}$. We have (by Plancherel)
\begin{align*}
\left| \int_{I} \vp(x) d\g(x) \right| &= \left| \sum_{|m| \leq l} b_m \left( \int_I e^{-im\theta} d\g(\theta) \right) \right|\\
&\leq \sum_{|m| \leq l} |b_m|\left| \int_I e^{-im\theta} d\g(\theta) \right|,
\end{align*}
and bounding the last integral in size by $1/(1+|m|)$ gives
\begin{equation}\label{eq:intbound}
\left| \int_{I} \vp(x) d\g(x) \right| \ll \sum_{|m| \leq l} |b_m| \frac{1}{1 + |m|}.
\end{equation}
We cut the range of the sum into two regions to be estimated separately:
\begin{equation}
\sum_{|m| \leq l} |b_m| \frac{1}{1 + |m|} = \sum_{|m| \leq l/2} |b_m| \frac{1}{1 + |m|} + \sum_{|m| > l/2}|b_m| \frac{1}{1 + |m|}.
\end{equation}
When $|m| \leq l/2$ we have
\begin{equation}
| b_m | \ll \frac{\log l}{(1 + l + |m|)^\delta(1 + l - |m|)^{\delta}} \ll \frac{\log l}{l^{2\delta}}.
\end{equation}
so that
\begin{equation}\label{eq:est10}
\sum_{|m| \leq l/2} |b_m| \frac{1}{1 + |m|} \ll (\log l) l^{-2\delta}\sum_{|m| \leq l/2} \frac{1}{1 + |m|} \ll (\log l)^2 l^{-2\delta}.
\end{equation}
In the other range we use Cauchy-Schwarz:
\begin{equation}
 \sum_{|m| > l/2}|b_m| \frac{1}{1 + |m|} \ll  \left(\sum_{|m| > l/2}|b_m|^2 \right)^{1/2} \left( \sum_{|m| > l/2}\frac{1}{(1 + |m|)^2} \right)^{1/2}
\end{equation}
and by Parseval
\begin{equation}
 \left(\sum_{|m| > l/2}|b_m|^2 \right)^{1/2} \leq \| \vp\lvert_\g \|_{L^2(\g)}
 \end{equation}
 giving
\begin{equation}\label{eq:est20} 
 \sum_{|m| > l/2}|b_m| \frac{1}{1 + |m|} \ll l^{-1/2}\| \vp\lvert_\g \|_{L^2(\g)}.
\end{equation}
Putting inequalities \eqref{eq:est10} and \eqref{eq:est20} together and recalling \eqref{eq:intbound} gives
\begin{equation}
\left| \int_{I} \vp(x) d\g(x) \right|  \ll (\log l)^2 l^{-2\delta} +  l^{-1/2}\| \vp\lvert_\g \|_{L^2(\g)}.
\end{equation} 
To proceed we use $\| \vp\lvert_\g \|_{L^2(\g)} \gg 1$ so that we have
\begin{equation}\label{eq:est30}
\left| \int_{I} \vp(x) d\g(x) \right| \ll \left((\log l)^2 l^{-2\delta} +  l^{-1/2} \right) \| \vp\lvert_\g \|^2_{L^2(\g)} \ll (\log l)^2 l^{-2\delta}  \| \vp\lvert_\g \|^2_{L^2(\g)} .
\end{equation}
Noting
\begin{equation}
\| \vp\lvert_\g \|^2_{L^2(\g)} \leq \| \vp\lvert_\g \|_{L^\infty(\g)} \| \vp\lvert_\g \|_{L^1(\g)},
\end{equation}
the $L^\infty$ bound \eqref{eq:linfinity} gives
\begin{equation}
\| \vp\lvert_\g \|^2_{L^2(\g)} \ll_\e l^{5/12 + \e} \| \vp\lvert_\g \|_{L^1(\g)}.
\end{equation}
Putting this into \eqref{eq:est30} gives
\begin{equation}
\left| \int_{I} \vp(x) d\g(x) \right| \ll_\e (\log l)^2 l^{5/12 + \e -2\delta}  \| \vp\lvert_\g \|_{L^1(\g)}. 
\end{equation}
We now assume the Lindel\"{o}f hypothesis for the relevant $L$-functions: taking $\delta$ arbitrarily close to $1/4$ gives
\begin{equation}\label{eq:est40}
\left| \int_{I} \vp(x) d\g(x) \right| \ll_\e l^{-1/12  + \e }  \| \vp\lvert_\g \|_{L^1(\g)}.
\end{equation}
As the contributions from the intervals $I_i$ paving the geodesic must make up the total $L^1$ norm, the estimate \eqref{eq:est40} implies that there must be $\gg_\e l^{1/12 - \e}$ intervals, hence $\gg_\e l^{1/12 - \e}$ zeros of $\vp$ on $\g$.
\end{proof}
\section{Notation}
\subsection{Relations}
\begin{center} 
\begin{tabular}{l l }
$f \asymp g$ &means that $f \ll g$ and $g \ll f$.
\end{tabular}
\end{center}

\subsection{Symbols}
\begin{center}
\begin{tabular}{l l}
$\AA$ & The adele ring of $\Q$.\\
$\AA_E$ & The adele ring of $E$.\\
$B$ & Hamilton quaternions over $\Q$.\\
$\B_l$ & Orthonormal basis of Hecke eigenfunctions for $\H_l^{\O^\times}$.\\
$\H_l$ & Homogeneous harmonic polynomials of degree $l$ restricted to $S^2$. \\
$L(\bullet, \bullet)$ & The finite $L$-function (without $\G$ factors). \\
$L^S(\bullet, \bullet)$  & The $L$-function with the Euler factors at $S$ removed.  \\   
$L_S(\bullet, \bullet)$  & The $L$-function with only the Euler factors at $S$ present. \\ 
$\N(f)$ & The number of nodal domains of a spherical harmonic $f$. \\
$\O(n)$ & Elements of $\O$ of norm $n$. \\
            
\end{tabular}
\end{center}

\section{Acknowledgements} I thank foremostly my advisor Alex Gamburd for the insightful suggestion that I should look at this problem and for his support throughout. I also thank Martin Weissman for conversations about automorphic representations which have been invaluable in my writing of this paper. 



\begin{thebibliography}{1} 
\bibitem[Be-77]{BERRY} M. V. Berry. Regular and irregular semiclassical wavefunctions, J. Phys. A: Math. Gen. {\bf 10}, no. 12:2083-2091, 1977.
\bibitem[BGS-02]{BGS} G. Blum, S. Gnutzmann, U. Smilanksy. Nodal Domains Statistics: A Criterion for Quantum Chaos, Phys. Rev. Letters {\bf88}:114101, 2002.
\bibitem[BL-03]{BL} J. Bourgain, E. Lindenstrauss. Entropy of quantum limits. Comm. Math. Phys. \textbf{233}, no. 1:153--171, 2003.
\bibitem[BS-02]{BS} E. Bogomolny, C. Schmit. Percolation Model for Nodal Domains of Chaotic Wave Functions, Phys. Rev. Letters {\bf88}:114102, 2002.
\bibitem[BSS-P-03]{BSSP} S. B\"{o}cherer, P. Sarnak, R. Schulze-Pillot. Arithmetic and equidistribution of measures on the sphere. Comm. Math. Phys. \textbf{242}, no. 1-2:67–80, 2003. 
\bibitem[CF-00]{CF} J. B. Conrey, D. W. Farmer. Mean Values of $L$-functions and Symmetry, Internat. Math. Res. Notices, {\bf 2000}, no. 17:883-908, 2000. 
\bibitem[Ch-76]{CHENG} S.-Y. Cheng. Eigenfunctions and Nodal Sets, Comment. Math. Helvetici {\bf51}:43-55, 1976.
\bibitem[CH-66]{CH} R. Courant, D. Hilbert. \textit{Methods of Mathematical Physics, Vol. I}, Interscience Publishers Inc. N. Y., 1966.
\bibitem[D-74]{DEL} P. Deligne. La conjecture de Weil, I, Inst. Hautes \'{E}tudes Sci. Publ. Math. {\bf43}:273-307, 1974.
\bibitem[E-73]{EICH} M. Eichler. The basis problem for modular forms and the traces of the Hecke operators, in \textit{Modular functions of one variable I}, pp. 76-151 Lectures Notes Math. 320, Berlin-Heidelberg-New York: Springer Verlag, 1973.
\bibitem[EK-95]{EK} A. Edelman, E. Kostlan. How many zeros of a random polynomial are real?, Bull. Amer. Math. Soc. (N.S.) {\bf 32}:1-37, 1995.
\bibitem[GRS-12]{GRS} A. Ghosh, A. Reznikov, P. Sarnak. Nodal domains of Maass forms I, Preprint, {\tt arXiv:1207.6625v1 [math.NT]}, 2012.
\bibitem[IS-95]{IS} H. Iwaniec, P. Sarnak. $L^\infty$ norms of eigenfunctions of arithmetic surfaces, Annals of Mathematics {\bf141}:301-320, 1995.
\bibitem[JC-01]{JC} H. Jacquet, N. Chen. Positivity of quadratic base change $L$-functions, Bull. Soc. Math. France {\bf 129}, no. 1:33-90, 2001.
\bibitem[J-14]{JUNG} J. Jung. Quantitative quantum ergodicity and the nodal domains of Maass-Hecke cusp forms, Preprint, {\tt arXiv:1301.6211v2  [math.NT]}, 2014.
\bibitem[JZ-13]{JUNGZELDITCH} J. Jung, S. Zelditch. Number of nodal domains and singular points of eigenfunctions of negatively curved surfaces with an isometric involution, Preprint, {\tt arXiv:1310.2919  [math.SP]}, 2013.
\bibitem[Kac-43]{KAC} M. Kac. On the average number of real roots of a random algebraic equation, Bull. Amer. Math. Soc. {\bf 49}, no. 4:314-320, 1943.
\bibitem[KS-99]{KS} N. M. Katz, P. Sarnak. Zeroes of zeta functions and symmetry, Bull. Amer. Math. Soc. (N.S.) {\bf 36}, no. 1:1-26, 1999.
\bibitem[L-77]{LEWY} H. Lewy. On the minimum number of domains in which the nodal lines of spherical harmonics divide the sphere, Comm. Partial Differential Equations {\bf2}:1233-1244, 1977.
\bibitem[M-10]{MIL} D. Mili\'{c}evi\'{c}. Large values of eigenfunctions on arithmetic hyperbolic surfaces, Duke Math. J. {\bf155}, no. 2:365-401. 2010.
\bibitem[MW-08]{MW} K. Martin, D. Whitehouse. Central $L$-values and toric periods for GL(2), Internat. Math. Res. Notices, \textbf{2009}, no. 1:141-191, 2009. 
\bibitem[MV-10]{MV} P. Michel, A. Venkatesh. The subconvexity problem for $\GL_2$. Publ. Math. Inst. Hautes \'{E}tudes Sci. {\bf111}:171-271, 2010.
\bibitem[Na]{NAST} M. Nastasescu. The number of ovals of a random real plane curve, B.A. Thesis, Princeton University, 2011. 
\bibitem[NS-09]{NS} F. Nazarov, M. Sodin. On the number of nodal domains of random spherical harmonics, American Journal of Mathematics {\bf131}, no. 5:1337-1357, 2009. 
\bibitem[Sa-93]{SARNAKLN} P. Sarnak. Arithmetic Quantum Chaos, Lecture Notes, 1993, available at {\tt http://web.math.princeton.edu/sarnak/}.
\bibitem[Sa-08]{SARNAKLETTER} P. Sarnak. Letter to Andrei Reznikov, 2008, available at {\tt http://web.math.princeton.edu/sarnak/}.
\bibitem[So-08]{SOUND} K. Soundararajan. Extreme values of zeta and L-functions, Math. Ann. {\bf342}:467-486, 2008.
\bibitem[SS-89]{SS} A. Seeger, C. Sogge. Bounds for eigenfunctions of differential operators, Indiana Univ. Math. J. {\bf38}:669-682, 1989.
\bibitem[Sz-39]{SZEGO} G. Szeg\"{o}. \textit{Orthogonal Polynomials}, Amer. Math. Soc. Colloq. Publ. {\bf23}, Amer. Math. Soc., New York, 1939.  
\bibitem[T-79]{TATE} J. Tate. Number theoretic background, in \textit{Automorphic forms, representations and $L$-functions (Proc. Sympos. Pure Math., Oregon State Univ., Corvallis, Ore., 1977), Part 2, pp. 3-26}, Proc. Sympos. Pure Math., XXXIII, Amer. Math. Soc., Providence, R.I., 1979.
\bibitem[VK-97]{VAND} J. M. VanderKam. $L^\infty$ norms and quantum ergodicity on the sphere, Internat. Math. Res. Notices, \textbf{1997}, no. 7:329-347, 1997, Correction in: Internat. Math. Res. Notices, \textbf{1998}, no. 1:65, 1998.
\bibitem[W-85]{WALD} J.-L. Waldspurger. Sur les valeurs de certaines fonctions L automorphes en leur centre de sym\'{e}trie, Compositio Math. {\bf54}, no. 2:173-242, 1985.



\end{thebibliography}
\end{document}